\theoremstyle{thmstyleone}%
\newtheorem{theorem}{Theorem}%  meant for continuous numbers
\theoremstyle{thmstyletwo}%
\newtheorem{remark}{Remark}%
\theoremstyle{thmstylethree}%
\begin{document}

\title[Convergence Analysis of Dirichlet Energy Minimization]{Convergence Analysis of Dirichlet Energy Minimization for Spherical Conformal Parameterizations}

%%=============================================================%%
%% Prefix	-> \pfx{Dr}
%% GivenName	-> \fnm{Joergen W.}
%% Particle	-> \spfx{van der} -> surname prefix
%% FamilyName	-> \sur{Ploeg}
%% Suffix	-> \sfx{IV}
%% NatureName	-> \tanm{Poet Laureate} -> Title after name
%% Degrees	-> \dgr{MSc, PhD}
%% \author*[1,2]{\pfx{Dr} \fnm{Joergen W.} \spfx{van der} \sur{Ploeg} \sfx{IV} \tanm{Poet Laureate} 
%%                 \dgr{MSc, PhD}}\email{iauthor@gmail.com}
%%=============================================================%%

\author[1]{\fnm{Wei-Hung} \sur{Liao}}\email{roger2300245@gmail.com}

\author*[2]{\fnm{Tsung-Ming} \sur{Huang}}\email{min@ntnu.edu.tw}

\author[3,1]{\fnm{Wen-Wei} \sur{Lin}}\email{wwlin@math.nctu.edu.tw}

\author[2]{\fnm{Mei-Heng} \sur{Yueh}}\email{yue@ntnu.edu.tw}

\affil[1]{\orgdiv{Department of Applied Mathematics}, \orgname{National Yang Ming Chiao Tung University}, \orgaddress{\city{Hsinchu}, \postcode{300},  \country{Taiwan}}}

\affil*[2]{\orgdiv{Department of Mathematics}, \orgname{National Taiwan Normal University}, \orgaddress{\city{Taipei}, \postcode{116}, \country{Taiwan}}}

\affil[3]{\orgname{Nanjing Center for Applied Mathematics}, \orgaddress{\city{Nanjing}, \postcode{211135},  \country{People's Republic of China}}}

%%==================================%%
%% sample for unstructured abstract %%
%%==================================%%

\abstract{In this paper, we first derive a theoretical basis for spherical conformal parameterizations between a simply connected closed surface $\mathcal{S}$ and a unit sphere $\mathbb{S}^2$ by minimizing the Dirichlet energy on $\overline{\mathbb{C}}$ by stereographic projection. The Dirichlet energy can be rewritten as the sum of the energies associated with the southern and northern hemispheres and can be decreased under an equivalence relation by alternatingly solving the corresponding Laplacian equations. Based on this theoretical foundation, we develop a modified Dirichlet energy minimization with nonequivalence deflation for the computation of the spherical conformal parameterization between $\mathcal{S}$ and $\mathbb{S}^2$. In addition, under some mild conditions, we verify the asymptotically R-linear convergence of the proposed algorithm. Numerical experiments on various benchmarks confirm that the assumptions for convergence always hold and indicate the efficiency, reliability and robustness of the developed modified Dirichlet energy minimization.}

\keywords{Dirichlet energy minimization, spherical conformal parameterization, nonequivalence deflation, asymptotically R-linear convergence.}

%%\pacs[JEL Classification]{D8, H51}

%%\pacs[MSC Classification]{35A01, 65L10, 65L12, 65L20, 65L70}

\maketitle

\section{Introduction} \label{sec:Intro}

The Poincar\'{e}--Klein--Koebe uniformization theorem \cite{koebe1907, poincare1908} claims that a simply connected Riemann surface is conformally equivalent to one of three canonical Riemann surfaces, namely, a sphere $\mathbb{S}^2=\overline{\mathbb{C}}=\mathbb{R}^2\cup\{\infty\}$, a complex plane $\mathbb{C}$, or a unit disk $\mathbb{D}$. In this paper, we focus on the study of the spherical conformal parameterization between a simply connected closed surface $\mathcal{S}$, i.e., a closed surface of genus zero, and the unit sphere $\mathbb{S}^2$. It is well known from the Dirichlet principle in \cite{hilbert1935} that a spherical conformal map from $\mathcal{S}$ to $\mathbb{S}^2$ solves the optimization problem of Dirichlet energy with constraints on $\mathbb{S}^2$. However, since the constraint of $\mathbb{S}^2$
is not a convex domain, it is generally difficult for a gradient descent projection method or a heat diffusion flow approach on $\mathbb{S}^2$ to show convergence. Therefore, in this paper, we reexamine the expression of the Dirichlet energy on $\overline{\mathbb{C}}$ and derive a theoretical foundation for Dirichlet energy minimization. On this basis, we will develop a new numerical algorithm for efficiently solving the spherical conformal map and prove that the numerical method has asymptotically R-linear convergence.

We first survey numerical methods developed in the past as follows: Because harmonic mappings between genus-zero closed surfaces are conformal \cite{hilbert1935}, the computation of the spherical conformal parameterization problem is equivalent to finding a harmonic mapping between the surface and the unit sphere. 
Angenent et al. \cite{AnHT99} first proposed a method for computing spherical harmonic maps of genus-zero closed triangular meshes by solving a linear Laplace--Beltrami equation with a 3-point boundary condition in view of the Dirichlet principle. The computational cost of \cite{AnHT99} is quite low, as the task is merely to solve a linear system with the size of the number of vertices. However, the drawback is that the conformal distortions in the region near the three constrained points are relatively large. 
Nevertheless, the map provided by \cite{AnHT99} can be generally considered a good initial map for the other existing algorithms. 

Later, Gu et al. \cite{GuWC04} proposed a nonlinear heat diffusion algorithm with a forward Euler method for the computation of spherical harmonic maps, which has highly improved conformal distortions compared to \cite{AnHT99}. However, the required number of iterations for convergence is not satisfactory, and this method usually takes a few minutes for a triangular mesh with approximately 100K vertices. Next, Huang et al. \cite{HuGH14} improved the efficiency of nonlinear heat diffusion by the quasi-implicit Euler method (QIEM), which uses a centralized and normalized projection to guarantee that the image is always on $\mathbb{S}^2$ in the iteration procedure. Because this projection is highly nonlinear, the convergence of the heat diffusion algorithms \cite{GuWC04,HuGH14} cannot be theoretically guaranteed.

A year later, Choi et al. \cite{ChLL15} proposed a fast landmark aligned spherical harmonic (FLASH) parameterization based on a quasi-conformal approach that aims to minimize the absolute values of the Beltrami coefficients. Numerical experiments indicate that the FLASH algorithm has better efficiency and fewer conformal distortions than heat diffusion algorithms \cite{GuWC04,HuGH14}. Very recently, a parallel version of FLASH developed by \cite{ChLG20} was found to significantly boost the computational cost when implemented in the environment of parallel machines.

In recent years, Yueh et al. \cite{YuLW17,YuLL19} proposed a Dirichlet energy minimization method (DEM) to alternatingly minimize the associated Dirichlet energy on $\overline{\mathbb{C}}$ corresponding to the southern and northern hemispheres by stereographic projection. Numerical experiments indicate that this method has a slightly improved efficiency and similar conformal distortion compared to the FLASH algorithms \cite{ChLL15}. Nevertheless, this approach lacks theoretical foundations for convergence.

In this paper, we first provide a theoretical foundation for the computation of the spherical conformal parameterization between a genus-zero closed surface $\mathcal{S}$ and $\mathbb{S}^2$ by minimizing the Dirichlet energy on $\overline{\mathbb{C}}$ by the flattening technique of stereographic projection from $\mathbb{S}^2$ to $\overline{\mathbb{C}}$. The Dirichlet energy can be divided into the sum of energies corresponding to the southern and northern hemispheres and then reduced under an equivalence relation by alternatingly solving the associated Laplacian equations. Based on this theoretical foundation, we propose a modified Dirichlet energy minimization (MDEM) with a nonequivalence deflation technique for computing the spherical conformal parameterization between $\mathcal{S}$ and $\mathbb{S}^2$. Furthermore, we prove the asymptotically R-linear convergence of the MDEM algorithm under some mild conditions. Numerical experiments on various models from benchmarks validate that the conditions for convergence always hold and show the efficiency and reliability of MDEM.

\noindent The main contributions of this paper are threefold.
\begin{itemize}
\item The original Dirichlet energy minimization problem \cite{hilbert1935} for spherical conformal parameterization between the genus-zero closed surface $\mathcal{S}$ and $\mathbb{S}^2$ is a 3-dimensional vector-valued variation that considers not only the tangential variation but also the normal variation, which makes the problem much more complicated. We apply a stereographic projection from $\mathbb{S}^2$ to $\overline{\mathbb{C}}$ to reduce the dimension of the variational space. Then, we propose a theoretical foundation of the DEM on $\overline{\mathbb{C}}$ for spherical conformal parameterizations. Using the calculus of variations, we derive the Euler--Lagrange equations and divide them into two parts, corresponding to the homothetic and minimal transformations. We further determine the equations that are meaningful for the angle-preserving transformation corresponding to the minimal transformation part. Then, we define an equivalence relation to compare the reduction of energy that contributes to conformality.
\item Based on the theoretical foundation above, we propose an efficient and reliable MDEM algorithm with nonequivalence deflation for the computation of spherical conformal parameterizations between a genus-zero closed triangular mesh and $\mathbb{S}^2$. We then prove that the MDEM algorithm converges asymptotically and R-linearly under certain conditions.
\item Numerical experiments on the important BraTS 2021 medical databases~\cite{BaAS17,BaGB21} with 1251 brain images validate that the assumptions for convergence hold for all brain samples. With the aid of spherical conformal parameterizations for transforming brain images to the canonical domain $\mathbb{S}^2$, brain tumor detection and segmentation, as well as image processing tasks such as alignment, registration and texture mapping, can be smoothly carried out by AI algorithms.
\end{itemize}

This paper is organized as follows: In Section \ref{sec:Thm_Found_DEM}, we introduce the theoretical foundation for the computation of spherical harmonic maps by minimizing the Dirichlet energy on the image of $\overline{\mathbb{C}}=\mathbb{R}^2\cup\{\infty\}$. In Section \ref{sec:3}, we propose a modified DEM with a nonequivalence deflation technique for the computation of spherical conformal parameterizations of simply connected closed triangular meshes and prove the asymptotically R-linear convergence of the MDEM algorithm. Numerical validations of the asymptotically R-linear convergence and the conformality of MDEM are given in Section \ref{sec:4}. Finally, a concluding remark is given in Section \ref{sec:5}.

In this paper, we use the following notation:
\begin{itemize}[label={$\bullet$}]
\item Bold letters, for instance $\mathbf{f}$, denote (complex) vectors.
\item Capital letters, for instance $A$, denote matrices.
\item Typewriter-style letters, for instance $\mathtt{I}$ and $\mathtt{B}$, denote ordered sets of indices. 
\item $\mathbf{f}_i$ denotes the $i$th entry of the vector $\mathbf{f}$.
\item $\mathbf{f}_\mathtt{I}$ denotes the subvector of $\mathbf{f}$ composed of $\mathbf{f}_i$ for $i\in\mathtt{I}$.
\item $\lvert \mathbf{f}\rvert$ denotes a vector with entries that are the absolute values of those in $\mathbf{f}$. 
\item $A_{i,j}$ denotes the $(i,j)$th entry of matrix $A$.
\item $A_{\mathtt{I},\mathtt{J}}$ denotes the submatrix of $A$ composed of $A_{i,j}$ for $i\in\mathtt{I}$ and $j\in\mathtt{J}$.
\item $\mathbb{S}^2:=\{ \mathbf{x}\in\mathbb{R}^{3} \mid \|\mathbf{x}\|=1 \}$ denotes the $2$-sphere in $\mathbb{R}^{3}$.
\item $\mathbb{D}:=\{ z\in\mathbb{C} \mid \lvert z \rvert <1  \}$ denotes the unit disk in $\mathbb{C}$.
\item $\left[{v}_0, \ldots, {v}_m\right]$ denotes the $m$-simplex of the vertices ${v}_0, \ldots, {v}_m$. 
\item $\mathrm{i}$ denotes the imaginary unit $\sqrt{-1}$. 
\end{itemize}

\section{Theoretical foundations for the DEM on $\overline{\mathbb{C}}$}
\label{sec:Thm_Found_DEM}

In this section, we propose a theoretical foundation for the computation of spherical harmonic maps by minimizing the Dirichlet energy on the image of $\overline{\mathbb{C}}=\mathbb{R}^2\cup\{\infty\}$. The well-known uniformization theorem in \cite{koebe1907, poincare1908} states that every simply connected closed Riemann surface $\mathcal{S}$ is conformally equivalent to the unit sphere $\mathbb{S}^2$. To study the structure of conformality between $\mathcal{S}$ and $\mathbb{S}^2$, the flattening technique by stereographic projection from $\mathbb{S}^2$ to $\overline{\mathbb{C}}$ is considered to make this method more feasible in terms of applications.

Let $\mathbb{D}$ be a unit disk, and let $(\mathbb{D}, \delta)$ and $(\mathbb{S}^2, \widetilde{\delta})$ be Riemann surfaces endowed with the Euclidean metrics $\delta= \delta_{\alpha \beta}dx^\alpha dx^\beta$, $\alpha, \beta = 1, 2$, and $\widetilde{\delta}=\widetilde{\delta}_{ij}df^idf^j$, $i, j= 1, 2, 3$, respectively, where $\delta_{\alpha\beta}$ and $\widetilde{\delta}_{ij}$ denote the Kronecker delta. 
The Dirichlet energy of a $C^1$-map $f:\mathbb{D}\to\mathbb{S}^2$ is defined as \cite{jost2008}
\begin{align}
E(f):=\frac{1}{2}\int_{\mathbb{D}}\delta^{\alpha\beta}(x)\widetilde{\delta}_{ij}(f(x))\frac{\partial f^i}{\partial x^{\alpha}}\frac{\partial f^j}{\partial x^{\beta}}\sqrt{\det\delta}dx^1 \wedge dx^2\label{DiriE},
\end{align}
where $\left[\delta^{\alpha\beta}\right]:=\left[\delta_{\alpha\beta}\right]^{-1}$ and $\det\delta:=\det\left[\delta_{\alpha\beta}\right]$.

Suppose $\Gamma$ is a fixed smooth map from $\partial\mathbb{D}$ into $\mathbb{S}^2$; the energy has a minimizer of \eqref{DiriE} in the class
\begin{align*}
E_{\Gamma}:=\left\{w\in W^{1,2}\left(\mathbb{D}, \mathbb{R}^3\right) \;\middle\lvert \; \mbox{$w\in\mathbb{S}^2$ a.e. in 
$\mathbb{D}$, $w=\Gamma$ on $\partial\mathbb{D}$}\right\}.
\end{align*}
For each map $w\in E_{\Gamma}$, we can define $W:\mathbb{R}^2\to \mathbb{S}^2$ at a point $x=\left(x^1, x^2\right)$ by
\begin{align*}
\widetilde{w}\left(x\right):=
\left\{\begin{aligned}
&w\left(x\right)\in\mathbb{S}^2_{-}, &\mbox{$x\in \mathbb{D}$},\\
&w\left(x\right)=\underline{w}\left(x\right)\in\Gamma, &\mbox{$x\in \partial\mathbb{D}$},\\
&\underline{w}\left(x\right)\in\mathbb{S}^2_{+}, &\mbox{$x\in \mathbb{R}^2\backslash\mathbb{D}$}
\end{aligned}\right.
\end{align*}
where $\underline{w}\left(x\right):=w\left(\frac{x^1}{\| x\|^2}, \frac{x^2}{\| x\|^2}\right)$ and $\mathbb{S}^2_{\pm}$ denote the upper and lower spherical caps of $\mathbb{S}^2$ bounded by $\Gamma$, respectively.

Next, we use the north pole stereographic projection to map the image from $\mathbb{S}^2$ onto $\overline{\mathbb{C}}$ via $\widetilde{z}:=\Pi_{\mathbb{S}^2}\circ \widetilde{w}$:
\begin{align*}
\widetilde{z}\left(x\right):=
\left\{\begin{aligned}
&z\left(x\right)=u+\mathrm{i} v,& x\in \mathbb{D},\\
&\underline{z}\left(x\right)=\underline{u}+\mathrm{i}\underline{v},& x\in \mathbb{R}^2\backslash\mathbb{D}.
\end{aligned}\right.
\end{align*}
In addition, the north pole inverse stereographic projection at $\left(u, v\right)$ is defined by
\begin{align}
\Pi^{-1}_{\mathbb{S}^2}\left(u, v\right)=\left(\frac{2u}{u^2+v^2+1}, \frac{2v}{u^2+v^2+1}, \frac{u^2+v^2-1}{u^2+v^2+1}\right) \label{eq:inverse_stere_proj}
\end{align}
with the induced metric $g=\frac{4}{\left(u^2+v^2+1\right)^2}\left(du^2+dv^2\right)$.

Since stereographic projections are angle-preserving, we use the north pole stereographic projection to change the target image from a unit sphere $\mathbb{S}^2$ to $\overline{\mathbb{C}}$. We then relate the Dirichlet energy of $\widetilde{w}: (\mathbb{R}^2, \delta)\rightarrow(\mathbb{S}^2, \widetilde{\delta})$ to $\widetilde{z}\equiv\Pi_{\mathbb{S}^2}\circ \widetilde{w}:\left(\mathbb{R}^2, \delta\right)\rightarrow\left(\mathbb{R}^2, g\right)$ as follows:
\begin{subequations}
\begin{align}
E\left(\widetilde{w}\right)&=\frac{1}{2}\int_{\mathbb{R}^2}\delta^{\alpha\beta}\left(x\right)\widetilde{\delta}_{ij}\left(\widetilde{w}\left(x\right)\right)\frac{\partial \widetilde{w}^i}{\partial x^{\alpha}}\frac{\partial \widetilde{w}^j}{\partial x^{\beta}} \sqrt{\det\delta}dx^1\wedge dx^2, \ i, j= 1, 2, 3, \label{DiriE.5} \\ 
E\left(\widetilde{z}\right)&=\frac{1}{2}\int_{\mathbb{R}^2}\delta^{\alpha\beta}\left(x\right)g_{ij}\left(\widetilde{z}\left(x\right)\right)\frac{\partial \widetilde{z}^i}{\partial x^{\alpha}}\frac{\partial \widetilde{z}^j}{\partial x^{\beta}} \sqrt{\det\delta}dx^1\wedge dx^2, \ i, j= 1, 2.\label{DiriE.6}
\end{align}
\end{subequations}
Thus, $\widetilde{w}^*$ is a minimizer of \eqref{DiriE.5} if and only if $\widetilde{z}^*=\Pi_{\mathbb{S}^2}\circ\widetilde{w}^*$ is a minimizer of \eqref{DiriE.6}.

Henceforth, we focus on the study of the Dirichlet energy $E\left(\widetilde{z}\right)$ in \eqref{DiriE.6}. The integral of \eqref{DiriE.6} can be divided into two parts, namely, the southern integral $E_{S}\left(z\right)$ on $\mathbb{D}$ and the northern integral $E_{N}\left(\underline{z}\right)$ on $\mathbb{R}^2\backslash \mathbb{D}$. By applying the inverse stereographic projection \eqref{eq:inverse_stere_proj}, $E(\widetilde{z})$ can be written as the sum of the southern and northern hemispheres:
\begin{subequations}
\begin{align}
\label{Dirichlet:Z}
E\left(\widetilde{z}\right) = E_{S}\left(z\right)+E_{N}\left(\underline{z}\right),
\end{align}
where
\begin{align}
E_{S}(z) 
&=\frac{1}{2}\int_{\mathbb{D}}\delta^{\alpha\beta}g_{ij}\left(z\right)\frac{\partial z^i}{\partial x^{\alpha}}\frac{\partial z^j}{\partial x^{\beta}}\sqrt{\det\delta} dx^1\wedge dx^2 \nonumber \\
&= \int_{\mathbb{D}}\frac{2\langle \nabla z, {\nabla}z\rangle}{\left(1+\lvert  z\rvert^2\right)^2} dx^1\wedge dx^2, \label{DiriE.7} \\
E_{N}(\underline{z}) 
&= \frac{1}{2}\int_{\mathbb{R}^2\backslash\mathbb{D}}\delta^{\alpha\beta}g_{ij}\left(\underline{z}\right)\frac{\partial \underline{z}^i}{\partial x^{\alpha}}\frac{\partial \underline{z}^j}{\partial x^{\beta}}\sqrt{\det\delta}  dx^1\wedge dx^2 \nonumber \\
&= \int_{\mathbb{R}^2\backslash\mathbb{D}}\frac{2\langle \nabla\underline{z}, \nabla\underline{z}\rangle}{\left(1+\lvert  \underline{z}\rvert^2\right)^2} dx^1\wedge dx^2.\label{DiriE.8}
\end{align}
\end{subequations}
Through integration by parts, $E_{S}$ and $E_{N}$ can be expressed as  
\begin{equation}
\begin{split}
E_{S}(z) 
=& -\int_{\mathbb{D}}\left\langle \nabla\left(\frac{2}{(1+\lvert  z\rvert^2)^2}\right), (u{\nabla}u+v{\nabla}v)\right\rangle dx^1\wedge dx^2\\
&\quad\quad-\int_{\mathbb{D}}\frac{2\langle z, \triangle z\rangle}{(1+\lvert  z\rvert^2)^2} dx^1\wedge dx^2 \label{DiriEs.8-2} \\
\end{split}
\end{equation}
\begin{equation}
\begin{split}
E_{N}(\underline{z})
=& -\int_{\mathbb{R}^2\backslash\mathbb{D}}\left\langle \nabla\left(\frac{2}{(1+\lvert  \underline{z}\rvert^2)^2}\right), (\underline{u}\nabla\underline{u}+\underline{v}\nabla\underline{v})\right\rangle dx^1\wedge dx^2\\
&\quad\quad-\int_{\mathbb{R}^2\backslash\mathbb{D}}\frac{2\langle\underline{z}, \triangle \underline{z}\rangle}{(1+\lvert  \underline{z}\rvert^2)^2} dx^1\wedge dx^2, \label{DiriEn.8-2}
\end{split}
\end{equation}
where the boundary integral terms in \eqref{DiriEs.8-2} and \eqref{DiriEn.8-2} are cancelled in \eqref{Dirichlet:Z} because the unit normal vectors along the boundary are in the opposite directions to their domains. 

\subsection{Critical points of $E_{S}(z)$ and $E_{N}(\underline{z})$}
In what follows, we first discuss the critical point for $E_S(z)$, and then $E_N(\underline{z})$ can be handled in a similar way. Since
\begin{align}
&\left\langle \nabla\left(\frac{2}{\left(1+\lvert  z\rvert^2\right)^2}\right), \left(u\nabla u+v\nabla v\right)\right\rangle \nonumber \\
=&-8\frac{\left(u^2u_{x^1}^2+2uvu_{x^1}v_{x^1}+v^2v^2_{x^1}\right)+\left(u^2u_{x^2}^2+2uvu_{x^2}v_{x^2}+v^2v^2_{x^2}\right)}{\left(1+u^2+v^2\right)^3} \nonumber \\
=&-8\frac{\left(\frac{1}{2}\frac{\partial u^2}{\partial x^{1}}+\frac{1}{2}\frac{\partial v^2}{\partial x^{1}}\right)^2+\left(\frac{1}{2}\frac{\partial u^2}{\partial x^{2}}+\frac{1}{2}\frac{\partial v^2}{\partial x^{2}}\right)^2}{\left(1+u^2+v^2\right)^3} \nonumber \\
=&-\frac{2}{1+u^2+v^2}\left\lvert  \frac{\nabla\left(1+u^2+v^2\right)}{1+u^2+v^2}\right\rvert^2 \nonumber \\
=&-\frac{2}{1+\lvert  z\rvert^2}\left\vert\nabla\ln\left(1+\lvert  z\rvert^2\right)\right\rvert^2, \label{ELn.10}
\end{align}
where $u_{x^i} \equiv \frac{\partial u}{\partial x^i}$ and $v_{x^i} \equiv \frac{\partial v}{\partial x^i}$ for $i= 1, 2$, $E_{S}$ in \eqref{DiriEs.8-2} becomes 
\begin{equation} 
E_S(z) 
= \int_{\mathbb{D}}\frac{2}{1+\lvert   z\rvert^2}\left\lvert  \nabla\ln\left(1+\lvert   z\rvert^2\right)\right\rvert^2 dx^1\wedge dx^2 
-\int_{\mathbb{D}}\frac{2\langle z, \triangle z\rangle}{(1+\lvert   z\rvert^2)^2} dx^1\wedge dx^2. \label{DiriEs.8-2-1} 
\end{equation}

Our goal is to find the critical point and the Euler--Lagrange equations for the Dirichlet energy $E(\widetilde{z})$ in \eqref{Dirichlet:Z}, which are the generalization of the geodesics on $\mathbb{S}^2$ and the harmonic equations on $\mathbb{D}$ after applying a stereographic projection to transform the curved target into $\mathbb{C}$. For convenience, we now define the Euler--Lagrange equations for $\mathbf{u}$ and $\mathbf{v}$ as
\begin{subequations} \label{ELs}
\begin{align}
-\triangle \mathbf{u}&=\frac{-2\mathbf{u} \mathbf{u}^2_{x^1}- 2\mathbf{u}\mathbf{u}^2_{x^2}+ 2\mathbf{u}\mathbf{v}^2_{x^1}+ 2\mathbf{u}\mathbf{v}^2_{x^2}- 4\mathbf{u}_{x^1}\mathbf{v}_{x^1}\mathbf{v}- 4\mathbf{u}_{x^2}\mathbf{v}_{x^2}\mathbf{v}}{1+\mathbf{u}^2+\mathbf{v}^2},\\
-\triangle \mathbf{v}&=\frac{-2\mathbf{v}\mathbf{v}^2_{x^1}- 2\mathbf{v}\mathbf{v}^2_{x^2}+ 2\mathbf{v}\mathbf{u}^2_{x^1}+ 2\mathbf{v}\mathbf{u}^2_{x^2}- 4\mathbf{u}_{x^1}\mathbf{v}_{x^1}\mathbf{u}- 4\mathbf{u}_{x^2}\mathbf{v}_{x^2}\mathbf{u}}{1+\mathbf{u}^2+\mathbf{v}^2},  
\end{align}
\end{subequations}
and we give the minimal conditions as
\begin{subequations} \label{assumptions}
\begin{align}
    &\ \lvert  \mathbf{u}\rvert^2\lvert  \nabla \mathbf{v}\rvert^2=\lvert  \mathbf{v}\rvert^2\lvert  \nabla \mathbf{u}\rvert^2, \mbox{ i.e., } \mathbf{u}^2\left(\mathbf{v}^2_{x^1}+\mathbf{v}^2_{x^2}\right) = \mathbf{v}^2\left(\mathbf{u}^2_{x^1}+\mathbf{u}^2_{x^2}\right),\label{assumption:a} \\
    &\ \mbox{$\nabla \mathbf{u}$ and $\nabla \mathbf{v}$ are linearly dependent.}\label{assumption:b}
\end{align}
\end{subequations}
\begin{theorem} \label{lem:CT_Es}
Suppose $z_c=u+\mathrm{i}v$ is a critical point of $E_S$ in \eqref{DiriE.7}. Then, $u$ and $v$ satisfy the Euler--Lagrange equations in \eqref{ELs} with $(u, v) =  (\mathbf{u}, \mathbf{v})$ 
and
\begin{align} 
E_S(z_c)
=& \int_{\mathbb{D}}\frac{1}{1+\lvert  z_c\rvert^2}\left\lvert  \nabla\ln\left(1+\lvert  z_c\rvert^2\right)\right\rvert^2 dx^1\wedge dx^2 \nonumber \\
&\quad +\int_{\mathbb{D}}\frac{4}{1+\lvert  z_c\rvert^2}\frac{u^2\lvert  \nabla v\rvert^2-2uv\langle \nabla u, \nabla v\rangle +v^2\lvert  \nabla u\rvert^2}{\left(1+\lvert  z_c\rvert^2\right)^2} dx^1\wedge dx^2. \label{eq:Es_zc-1} 
\end{align}
Furthermore, if $u$ and $v$ also satisfy the minimal conditions \eqref{assumptions} almost everywhere, then $E_S$ reaches its minimum at $z_c$ and
\begin{equation}
E_S(z_c)=\int_{\mathbb{D}}\frac{1}{1+\lvert  z_c\rvert^2}\left\lvert  \nabla \ln\left(1+\lvert  z_c\rvert^2\right)\right\rvert^2 dx^1\wedge dx^2. \label{Es_zn.6}
\end{equation}
\end{theorem}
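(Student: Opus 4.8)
The plan is to treat the two claims in three stages: first to locate critical points through the first variation of \eqref{DiriE.7}, then to turn \eqref{DiriEs.8-2-1} into the closed form \eqref{eq:Es_zc-1} by inserting the Euler--Lagrange equations, and finally to exploit the non-negativity of the residual term together with the minimal conditions. For the first variation I would start from $E_S(z)=\int_{\mathbb{D}}\frac{2(|\nabla u|^2+|\nabla v|^2)}{(1+u^2+v^2)^2}\,dx^1\wedge dx^2$ and compute $\frac{d}{dt}E_S(u+t\varphi,v)|_{t=0}$ and $\frac{d}{dt}E_S(u,v+t\psi)|_{t=0}$ for test functions $\varphi,\psi$ compactly supported in $\mathbb{D}$. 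Integrating the gradient terms by parts, the boundary contributions vanish, and collecting the coefficients of $\varphi$ and $\psi$ and multiplying by $\tfrac14(1+u^2+v^2)^2$ yields, after setting the variations to zero, exactly \eqref{ELs} with $(u,v)=(\mathbf{u},\mathbf{v})$; this stage is routine.

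For \eqref{eq:Es_zc-1} I would start from \eqref{DiriEs.8-2-1}, split the first integrand $\frac{2}{1+|z_c|^2}|\nabla\ln(1+|z_c|^2)|^2$ into two equal halves, keep one half as the first term of \eqref{eq:Es_zc-1}, and combine the other half with $-\frac{2\langle z_c,\triangle z_c\rangle}{(1+|z_c|^2)^2}$. Writing $\langle z_c,\triangle z_c\rangle=u\,\triangle u+v\,\triangle v$, substituting the right-hand sides of \eqref{ELs} for $\triangle u$ and $\triangle v$, and using $|\nabla\ln(1+|z_c|^2)|^2=\frac{4|u\nabla u+v\nabla v|^2}{(1+|z_c|^2)^2}$, I expect the combined term to collapse to the identity
\[
\frac{1}{1+|z_c|^2}|\nabla\ln(1+|z_c|^2)|^2-\frac{2\langle z_c,\triangle z_c\rangle}{(1+|z_c|^2)^2}=\frac{4}{1+|z_c|^2}\cdot\frac{u^2|\nabla v|^2-2uv\langle\nabla u,\nabla v\rangle+v^2|\nabla u|^2}{(1+|z_c|^2)^2}
\]
after collecting the monomials $u^2|\nabla u|^2$, $v^2|\nabla v|^2$, $u^2|\nabla v|^2$, $v^2|\nabla u|^2$ and $uv\langle\nabla u,\nabla v\rangle$ over the common denominator $(1+|z_c|^2)^3$; this is precisely the bookkeeping that produces \eqref{eq:Es_zc-1}. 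One should keep in mind that \eqref{DiriEs.8-2}--\eqref{DiriEs.8-2-1} were written after dropping a boundary integral over $\partial\mathbb{D}$, so \eqref{eq:Es_zc-1} is to be read up to that term, which cancels in $E(\widetilde{z})=E_S+E_N$ exactly as in \eqref{Dirichlet:Z}.

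For the minimal conditions I would note that the quadratic form in the second integrand of \eqref{eq:Es_zc-1} is $u^2|\nabla v|^2-2uv\langle\nabla u,\nabla v\rangle+v^2|\nabla u|^2=|u\nabla v-v\nabla u|^2\ge0$, so \eqref{eq:Es_zc-1} already yields $E_S(z_c)\ge\int_{\mathbb{D}}\frac{1}{1+|z_c|^2}|\nabla\ln(1+|z_c|^2)|^2\,dx^1\wedge dx^2$. Under \eqref{assumption:b} I would write $\nabla u$ and $\nabla v$ as scalar multiples of a common unit vector and invoke \eqref{assumption:a}, $u^2|\nabla v|^2=v^2|\nabla u|^2$, to force $u\nabla v-v\nabla u=0$ almost everywhere, so the second integral vanishes and \eqref{Es_zn.6} follows. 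For the minimality statement I would combine this with the pointwise Cauchy--Schwarz bound $|u\nabla u+v\nabla v|^2\le(u^2+v^2)(|\nabla u|^2+|\nabla v|^2)$ and the geometric constraint $|z|\le1$ on the image of the southern cap to obtain $E_S(z)\ge\int_{\mathbb{D}}\frac{1}{1+|z|^2}|\nabla\ln(1+|z|^2)|^2\,dx^1\wedge dx^2$ for admissible competitors $z$, and then argue that this bound is attained at $z_c$. The step I expect to be the main obstacle is exactly this last one: fixing the correct sign when passing from $|u|\,|\nabla v|=|v|\,|\nabla u|$ to $u\nabla v=v\nabla u$, making the competitor class precise, and checking that the residual term is genuinely minimized over the equivalence class introduced above rather than merely being non-negative.
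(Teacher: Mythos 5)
Your proposal follows essentially the same route as the paper: the same first variation with compactly supported perturbations yielding \eqref{ELs}, the same substitution of the Euler--Lagrange equations into the $\langle z_c,\triangle z_c\rangle$ term of \eqref{DiriEs.8-2-1} to obtain \eqref{eq:Es_zc-1}, and the same lower-bound argument for the residual term. Your observation that the residual integrand is the perfect square $\lvert u\nabla v - v\nabla u\rvert^2$ is a cleaner route to non-negativity than the paper's arithmetic--geometric-mean plus Cauchy chain, and the sign ambiguity you flag at the end (conditions \eqref{assumptions} only give $\lvert u\rvert\,\lvert \nabla v\rvert=\lvert v\rvert\,\lvert \nabla u\rvert$ with $\nabla u\parallel\nabla v$, not $u\nabla v=v\nabla u$) is present but equally unaddressed in the paper's own proof, which silently needs $uv\langle\nabla u,\nabla v\rangle\geq 0$ to turn its inequalities into the equality \eqref{Es_zn.6}.
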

\begin{proof}
Let $\phi\equiv\phi^1+\mathrm{i}\phi^2\in C^{\infty}_c(\mathbb{D}, \mathbb{C})$ be any locally smooth map with compact support. Consider the perturbation $z_c+t\phi$ with a parameter $t\in\mathbb{R}$. From \eqref{DiriE.7}, we have, in local coordinates for all $\lvert t\lvert \to 0$,
\begin{align}
0=&\left. \frac{d}{dt}E_{S}\left(z_c+t\phi\right) \right\rvert_{t=0}\nonumber \\
=&\left. \frac{d}{dt}\int_{\mathbb{D}}\frac{2}{\left(1+\lvert z_c+t\phi\rvert^2\right)^2}\langle \nabla(z_c+t\phi), \nabla(z_c+t\phi) \rangle dx^1\wedge dx^2 \right\rvert_{t=0}\nonumber\\
=&\sum_{i=1}^2\left.\frac{d}{dt}\int_{\mathbb{D}}\frac{2}{\left(1+(u+\phi^1t)^2+(v+\phi^2t)^2\right)^2}\left(\frac{\partial (u+\phi^1t)}{\partial x^i}\right)^2 dx^1\wedge dx^2\right\rvert_{t=0}\nonumber\\
&\quad+\sum_{i=1}^2\left.\frac{d}{dt}\int_{\mathbb{D}}\frac{2}{\left(1+(u+\phi^1t)^2+(v+\phi^2t)^2\right)^2}\left(\frac{\partial (v+\phi^2t)}{\partial x^i}\right)^2dx^1\wedge dx^2\right\rvert_{t=0}\nonumber\\
=&\sum_{i=1}^2\int_{\mathbb{D}}\frac{-8(u\phi^1+v\phi^2)}{\left(1+u^2+v^2\right)^3}\left[\left(\frac{\partial u}{\partial x^i}\right)^2+\left(\frac{\partial v}{\partial x^i}\right)^2\right]dx^1\wedge dx^2\nonumber\\
&\quad+\sum_{i=1}^2\int_{\mathbb{D}}\frac{4}{\left(1+u^2+v^2\right)^2}\left(\frac{\partial u}{\partial x^i}\frac{\partial \phi^1}{\partial x^i}+\frac{\partial v}{\partial x^i}\frac{\partial \phi^2}{\partial x^i}\right)dx^1\wedge dx^2\nonumber\\
=&\int_{\mathbb{D}}\frac{-8\langle \nabla z, \nabla z\rangle}{\left(1+\lvert z\rvert^2\right)^3}\langle z, \phi\rangle dx^1\wedge dx^2+\int_{\mathbb{D}}\frac{4}{\left(1+\lvert z\rvert^2\right)^2}\langle \nabla z, \nabla\phi\rangle dx^1\wedge dx^2\nonumber\\
:=&\textup{I}+\textup{II}. \label{ELs.1}
\end{align}
Then,
\begin{subequations}
\begin{equation}
\textup{I}=\int_{\mathbb{D}}\frac{-8}{\left(1+u^2+v^2\right)^3}\left[u^2_{x^1}+u^2_{x^2}+v^2_{x^1}+v^2_{x^2}\right](u\phi^1+v\phi^2) dx^1\wedge dx^2,\label{ELs.2}
\end{equation}
and by integration by parts, we have
\begin{align}
\textup{II}=&\int_{\mathbb{D}}\left(16\frac{uu_{x^1}+vv_{x^1}}{\left(1+u^2+v^2\right)^3}u_{x^1}-\frac{4}{\left(1+u^2+v^2\right)^2}u_{x^1x^1}\right)\phi^1dx^1\wedge dx^2\nonumber\\
&+\int_{\mathbb{D}}\left(16\frac{uu_{x^1}+vv_{x^1}}{\left(1+u^2+v^2\right)^3}v_{x^1}-\frac{4}{\left(1+u^2+v^2\right)^2}v_{x^1x^1}\right)\phi^2dx^1\wedge dx^2\nonumber\\
&+\int_{\mathbb{D}}\left(16\frac{uu_{x^2}+vv_{x^2}}{\left(1+u^2+v^2\right)^3}u_{x^2}-\frac{4}{\left(1+u^2+v^2\right)^2}u_{x^2x^2}\right)\phi^1dx^1\wedge dx^2\nonumber\\
& +\int_{\mathbb{D}}\left(16\frac{uu_{x^2}+vv_{x^2}}{\left(1+u^2+v^2\right)^3}v_{x^2}-\frac{4}{\left(1+u^2+v^2\right)^2}v_{x^2x^2}\right)\phi^2dx^1\wedge dx^2\nonumber\\
=  
&\int_{\mathbb{D}}\frac{4}{\left(1+\lvert z\rvert^2\right)^2}\left(\frac{4u\lvert \nabla u\rvert^2+4v\langle \nabla v, \nabla u\rangle}{1+\lvert z\rvert^2}-\triangle u\right)\phi^1dx^1\wedge dx^2 \nonumber\\
&+\int_{\mathbb{D}}\frac{4}{\left(1+\lvert z\rvert^2\right)^2}\left(\frac{4v\lvert \nabla v\rvert^2+4u\langle \nabla u, \nabla v\rangle}{1+\lvert z\rvert^2}-\triangle v\right)\phi^2dx^1\wedge dx^2.\label{ELs.3} 
\end{align}
\end{subequations}
Substituting \eqref{ELs.2} and \eqref{ELs.3} into \eqref{ELs.1}, we have
\begin{align*}
0=&\left. \frac{d}{dt}E_{S}\left(z_c+t\phi\right) \right\rvert_{t=0}\\
=&\int_{\mathbb{D}}\frac{4}{(1+\lvert z\rvert^2)^2}\left[\frac{-2\left(\lvert \nabla u\rvert^2+\lvert \nabla v\rvert^2\right)u}{1+\lvert z\rvert^2}+\frac{4u\lvert \nabla u\rvert^2+4v\langle \nabla v, \nabla u\rangle}{1+\lvert z\rvert^2}-\triangle u \right]\phi^1 dx^1\wedge dx^2\\
&+\int_{\mathbb{D}}\frac{4}{\left(1+\lvert z\rvert^2\right)^2}\left[\frac{-2(\lvert \nabla u\rvert^2+\lvert \nabla v\rvert^2)v}{1+\lvert z\rvert^2}+\frac{4v\lvert \nabla v\rvert^2+4u\langle \nabla u, \nabla v\rangle}{1+\lvert z\rvert^2}-\triangle v\right]\phi^2 dx^1\wedge dx^2.
\end{align*}
That is, $u$ and $v$ must satisfy the Euler--Lagrange equations in \eqref{ELs}.

Substituting the results of \eqref{ELs} into the second term of \eqref{DiriEs.8-2}, we have that
\begin{align}
\frac{\langle z_c, -\triangle z_c\rangle}{1+\lvert z_c\rvert^2} 
=&\frac{\left(-2u^2u^2_{x^1}-4uvu_{x^1}v_{x^1}-2v^2v^2_{x^1}\right)+\left(-2u^2u^2_{x^2}-4uvu_{x^2}v_{x^2}-2v^2v^2_{x^2}\right)}{\left(1+u^2+v^2\right)^2} \nonumber \\
&+\frac{2u^2v^2_{x^1}+2u^2v^2_{x^2}+2v^2u^2_{x^1}+2v^2u^2_{x^2}-4uvu_{x^1}v_{x^1}-4uvu_{x^2}v_{x^2}}{\left(1+u^2+v^2\right)^2} \nonumber \\
=&-2\frac{\left(uu_{x^1}+vv_{x^1}\right)^2+\left(uu_{x^2}+vv_{x^2}\right)^2}{\left(1+u^2+v^2\right)^2} \nonumber \\
&+2\frac{u^2\left(v^2_{x^1}+v^2_{x^2}\right)-2uv\left(u_{x^1}v_{x^1}+u_{x^2}v_{x^2}\right)+v^2\left(u^2_{x^1}+u^2_{x^2}\right)}{\left(1+u^2+v^2\right)^2} \nonumber \\
=&-\frac{1}{2}\left\rvert\nabla\ln\left(1+\lvert z_c\rvert^2\right)\right\rvert^2+2\frac{u^2\lvert \nabla v\rvert^2-2uv\langle \nabla u, \nabla v\rangle +v^2\lvert \nabla u\rvert^2}{\left(1+\lvert z_c\rvert^2\right)^2}. \label{ELn.11} 
\end{align}
Plugging \eqref{ELn.11} into \eqref{DiriEs.8-2-1}, $E_S(z_c)$ can be rewritten in the form of \eqref{eq:Es_zc-1}.

By applying the arithmetic--geometric mean inequality and the Cauchy inequality 
\begin{align*}
\frac{u^2\lvert \nabla v\rvert^2 +v^2\lvert \nabla u\rvert^2}{2}\geq \sqrt{u^2v^2\lvert \nabla v\rvert^2\lvert \nabla u\rvert^2},\ \ \ 
   \lvert \nabla u\rvert^2  \lvert \nabla v\rvert^2 \geq \lvert \langle \nabla u, \nabla v\rangle\rvert^2
\end{align*}
to \eqref{eq:Es_zc-1}, we show that
\begin{align*}
E_S(z_c) &\geq \int_{\mathbb{D}}\frac{1}{1+\lvert z_c\rvert^2}\left\rvert\nabla\ln\left(1+\lvert z_c\rvert^2\right)\right\rvert^2 dx^1\wedge dx^2\\
&\quad +\int_{\mathbb{D}}\frac{4}{1+\lvert z_c\rvert^2}\frac{2\sqrt{u^2v^2\lvert \nabla v\rvert^2\lvert \nabla u\rvert^2}-2uv\langle \nabla u, \nabla v\rangle}{\left(1+\lvert z_c\rvert^2\right)^2} dx^1\wedge dx^2 \\ 
& \geq \int_{\mathbb{D}}\frac{1}{1+\lvert z_c\rvert^2}\left\rvert\nabla\ln\left(1+\lvert z_c\rvert^2\right)\right\rvert^2 dx^1\wedge dx^2,  %\label{ELn.12}
\end{align*}
which implies that $E_S$ reaches its minimum at $z_c=u+\mathrm{i}v$ if the minimal conditions \eqref{assumptions} hold.
\end{proof}

Similarly, we consider the variation of $E_N(\underline{z})$ in \eqref{DiriE.8}. We assume that $\underline{z}_c=\underline{u}+\mathrm{i}\underline{v}$ is a smooth critical point of $E_N$ at infinity. To make sense of the perturbations at infinity, we first use the inversion $\zeta \equiv \frac{1}{\bar{\underline{z}}_c}=\mu+\mathrm{i}\nu$ to ensure the finiteness of the points around infinity and then rewrite the energy $E_N$ and its perturbation in $\zeta$.

\begin{theorem} \label{lem:CT_En}
Suppose $\underline{z}_c=\underline{u}+\mathrm{i}\underline{v}$ is a critical point of $E_N$ in \eqref{DiriE.8}, and define $\zeta=\frac{1}{\bar{\underline{z}}_c}=\mu+\mathrm{i}\nu$. Then, $\mu$ and $\nu$ satisfy the Euler--Lagrange equations in \eqref{ELs} with $(\mu, \nu) = (\mathbf{u}, \mathbf{v})$, 
and
\begin{align}
E_N(\zeta)&=\int_{\mathbb{R}^2\backslash\mathbb{D}}\frac{-\left\rvert\nabla \ln\left(1+\lvert \zeta\rvert^2\right)\right\rvert^2}{1+\lvert \zeta\rvert^2} dx^1\wedge dx^2 
 +\int_{\mathbb{R}^2\backslash\mathbb{D}}\frac{4\langle \nabla\zeta, \nabla\zeta\rangle}{\left(1+\lvert \zeta\rvert^2\right)^2} dx^1\wedge dx^2 \nonumber \\
&\quad -\int_{\mathbb{R}^2\backslash\mathbb{D}}\frac{4}{1+\lvert \zeta\rvert^2}\frac{\mu^2\lvert \nabla\nu\rvert^2-2\mu\nu\langle \nabla\mu, \nabla\nu\rangle +\nu^2\lvert \nabla\mu\rvert^2}{\left(1+\lvert \zeta\rvert^2\right)^2} dx^1\wedge dx^2. \label{ELn.7}
\end{align}
Moreover, if $\mu$ and $\nu$ also satisfy the minimal conditions \eqref{assumptions} almost everywhere, then, $E_N$ reaches its minimum at $\zeta$ and
\begin{align}
E_N(\zeta)=&-\int_{\mathbb{R}^2\backslash\mathbb{D}}\frac{\left\rvert\nabla\ln\left(1+\lvert \zeta\rvert^2\right)\right\rvert^2}{1+\lvert \zeta\rvert^2} dx^1\wedge dx^2 
 +\int_{\mathbb{R}^2\backslash\mathbb{D}}\frac{4\langle \nabla\zeta, \nabla\zeta\rangle}{\left(1+\lvert \zeta\rvert^2\right)^2} dx^1\wedge dx^2. \label{ELn.8}
\end{align}
\end{theorem}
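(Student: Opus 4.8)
The plan is to rerun the proof of Theorem~\ref{lem:CT_Es} almost verbatim, transporting it through the target inversion $\zeta = 1/\bar{\underline{z}}_c$ (equivalently $\underline{z}_c = 1/\bar\zeta$) so that the critical behaviour ``at infinity'' becomes an ordinary interior one at $\zeta = 0$. The key preliminary step is to rewrite $E_N$ in $\zeta$. Because $w\mapsto 1/\bar w$ is an isometry of $\bigl(\overline{\mathbb{C}},\,\tfrac{4}{(1+|w|^2)^2}(du^2+dv^2)\bigr)$ — it is the reflection of $\mathbb{S}^2$ through the equatorial plane, interchanging the two spherical caps — the Dirichlet energy density of a map into this target is unchanged under this change of target coordinate, so
\begin{align*}
E_N(\underline{z}) &= \int_{\mathbb{R}^2\backslash\mathbb{D}}\frac{2\langle\nabla\underline{z},\nabla\underline{z}\rangle}{(1+|\underline{z}|^2)^2}\,dx^1\wedge dx^2 \\
&= \int_{\mathbb{R}^2\backslash\mathbb{D}}\frac{2\langle\nabla\zeta,\nabla\zeta\rangle}{(1+|\zeta|^2)^2}\,dx^1\wedge dx^2 =: E_N(\zeta),
\end{align*}
which one may also check directly from $|d(1/\bar\zeta)|^2 = |\zeta|^{-4}|d\zeta|^2$ and $1+|1/\bar\zeta|^2 = (1+|\zeta|^2)/|\zeta|^2$. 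Thus, written in $\zeta$, $E_N$ has exactly the same functional form as $E_S$ in \eqref{DiriE.7}, only with the domain $\mathbb{R}^2\backslash\mathbb{D}$ in place of $\mathbb{D}$, and $\zeta$ is now a \emph{finite} critical point of it.

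Next I would run the first-variation computation \eqref{ELs.1}--\eqref{ELs.3} verbatim on $E_N(\zeta)$, perturbing by $t\phi$ with $\phi = \phi^1+\mathrm{i}\phi^2 \in C_c^\infty$ supported away from $\partial\mathbb{D}$; since the Euler--Lagrange operator is local and the integrand agrees with that of $E_S$, this forces $\mu$ and $\nu$ to satisfy \eqref{ELs} with $(\mathbf{u},\mathbf{v}) = (\mu,\nu)$. To obtain \eqref{ELn.7}, I would substitute \eqref{ELs} into the integration-by-parts form of $E_N(\zeta)$ (the analogue of \eqref{DiriEn.8-2}, i.e.\ \eqref{DiriEs.8-2-1} rewritten in $\zeta$) and reduce with the algebraic identities \eqref{ELn.10} and \eqref{ELn.11}, which hold verbatim after replacing $(u,v)$ by $(\mu,\nu)$. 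This yields the direct northern counterpart of \eqref{eq:Es_zc-1},
\begin{align*}
E_N(\zeta) &= \int_{\mathbb{R}^2\backslash\mathbb{D}}\frac{|\nabla\ln(1+|\zeta|^2)|^2}{1+|\zeta|^2}\,dx^1\wedge dx^2 \\
&\quad + \int_{\mathbb{R}^2\backslash\mathbb{D}}\frac{4}{1+|\zeta|^2}\,\frac{\mu^2|\nabla\nu|^2 - 2\mu\nu\langle\nabla\mu,\nabla\nu\rangle + \nu^2|\nabla\mu|^2}{(1+|\zeta|^2)^2}\,dx^1\wedge dx^2,
\end{align*}
which, since $\int_{\mathbb{R}^2\backslash\mathbb{D}}\tfrac{4\langle\nabla\zeta,\nabla\zeta\rangle}{(1+|\zeta|^2)^2}\,dx^1\wedge dx^2 = 2E_N(\zeta)$, is precisely the stated form \eqref{ELn.7} (add and subtract $2E_N(\zeta)$, negating the first integral).

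For the last assertion I would argue as at the close of the proof of Theorem~\ref{lem:CT_Es}: by the arithmetic--geometric-mean inequality $\tfrac{\mu^2|\nabla\nu|^2+\nu^2|\nabla\mu|^2}{2} \ge \sqrt{\mu^2\nu^2|\nabla\mu|^2|\nabla\nu|^2}$ and the Cauchy inequality $|\nabla\mu|^2|\nabla\nu|^2 \ge \langle\nabla\mu,\nabla\nu\rangle^2$, the second integral in the displayed identity above is non-negative, whence $E_N(\zeta) \ge \int_{\mathbb{R}^2\backslash\mathbb{D}}\frac{|\nabla\ln(1+|\zeta|^2)|^2}{1+|\zeta|^2}\,dx^1\wedge dx^2$, with equality a.e.\ precisely when the minimal conditions \eqref{assumptions} hold; in that case that integral vanishes, and substituting its vanishing into \eqref{ELn.7} gives \eqref{ELn.8} and shows $E_N$ is minimized at $\zeta$.

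The main obstacle is not the algebra, which merely transcribes the $E_S$ case, but the rigorous treatment of the point at infinity. One must verify that $\underline{z}_c$ and $\nabla\underline{z}_c$ decay quickly enough at infinity for every integral in \eqref{ELn.7}--\eqref{ELn.8} to converge — equivalently, that $\zeta$ extends regularly across the origin under the conformal domain inversion $x\mapsto x/\|x\|^2$, which preserves the planar Dirichlet integrals; that the admissible perturbations of the critical point $\underline{z}_c$ near infinity correspond, through the inversion, to the compactly supported test functions $\phi$ used in the first variation; and that the $\partial\mathbb{D}$ boundary terms produced by the integration by parts — the ones noted to cancel against those of $E_S$ in \eqref{Dirichlet:Z} — may be discarded in the critical-point characterization, which is guaranteed by keeping $\phi$ supported away from $\partial\mathbb{D}$.
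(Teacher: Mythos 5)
Your proposal is correct and follows essentially the same route as the paper: rewrite $E_N$ in the inverted variable $\zeta=1/\bar{\underline{z}}_c$, where the energy takes the same functional form as $E_S$, rerun the first-variation argument to get \eqref{ELs} for $(\mu,\nu)$, and close with the arithmetic--geometric-mean and Cauchy inequalities. The only divergence is in how \eqref{ELn.7} is assembled: the paper stays in $\underline{z}_c$ and transfers the Laplacian term through the identity \eqref{iden}, whereas you integrate by parts directly in $\zeta$ and then pass to the stated form using $\int_{\mathbb{R}^2\backslash\mathbb{D}}\frac{4\langle \nabla\zeta, \nabla\zeta\rangle}{(1+\lvert \zeta\rvert^2)^2}\,dx^1\wedge dx^2=2E_N(\zeta)$ --- an equally valid piece of bookkeeping that carries the same (acknowledged) caveat about the $\partial\mathbb{D}$ boundary terms.
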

\begin{proof}
With $\zeta=\frac{1}{\bar{\underline{z}}_c}=\mu+\mathrm{i}\nu$ and 
\begin{align*}
\left\langle \nabla\frac{1}{\bar{\underline{z}}_c}, \nabla\frac{1}{\bar{\underline{z}}_c}\right\rangle=&\sum_{i=1}^2\left( \left(\frac{\partial}{\partial x^i}\frac{\underline{u}}{\underline{u}^2+\underline{v}^2}\right)^2+\left(\frac{\partial}{\partial x^i}\frac{\underline{v}}{\underline{u}^2+\underline{v}^2}\right)^2\right)\nonumber\\
=& \frac{\underline{u}^2_{x^1}+\underline{v}^2_{x^1}}{\left(\underline{u}^2+\underline{v}^2\right)^2}+\frac{\underline{u}^2_{x^2}+\underline{v}^2_{x^2}}{\left(\underline{u}^2+\underline{v}^2\right)^2} 
=\frac{1}{\lvert \underline{z}_c\rvert^4}\langle \nabla\underline{z}_c, \nabla\underline{z}_c\rangle,
\end{align*}
$E_{N}(\underline{z}_c)$ in \eqref{DiriE.8} can be rewritten as
\begin{align*}
E_{N}(\underline{z}_c) &= \int_{\mathbb{R}^2\backslash\mathbb{D}}\frac{2}{\left(1+\frac{1}{\lvert \underline{z}_c\rvert^2}\right)^2}\langle \nabla\frac{1}{\bar{\underline{z}}_c}, \nabla\frac{1}{\bar{\underline{z}}_c}\rangle dx^1\wedge dx^2\\
&= \int_{\mathbb{R}^2\backslash\mathbb{D}}\frac{2\langle \nabla\zeta, \nabla\zeta\rangle}{\left(1+\lvert \zeta\rvert^2\right)^2} dx^1\wedge dx^2\equiv E_{N}(\zeta). 
\end{align*}
By similar arguments to those for \eqref{ELs.1} in Theorem~\ref{lem:CT_Es}, we obtain the Euler--Lagrange equations in \eqref{ELs} for $\mu$ and $\nu$.

By using the fact that $\langle q, \frac{1}{\overline{q}}\rangle=1$ with $q\in\mathbb{C}$, we have the following useful relation for the Laplacian operator:
\begin{align}
&0=\langle \triangle q, \frac{1}{\overline{q}}\rangle+\langle q, \triangle\frac{1}{\overline{q}}\rangle+2\langle \nabla q, \nabla\frac{1}{\overline{q}}\rangle\nonumber\\
\Rightarrow&-\langle q, \triangle q\rangle=\lvert q\rvert^4\langle \frac{1}{\overline{q}}, \triangle\frac{1}{\overline{q}}\rangle+2\lvert q\rvert^2\langle \nabla q, \nabla\frac{1}{\overline{q}}\rangle. \label{iden}
\end{align}
Via \eqref{iden}, $E_N(\underline{z}_c)$ in \eqref{DiriEn.8-2} can be simplified and rewritten as $E_N(\zeta)$ with $\zeta =\frac{1}{\bar{\underline{z}}_c}$:
\begin{align}
E_N(\underline{z}_c) &= -\int_{\mathbb{R}^2\backslash\mathbb{D}}\left\langle \nabla\left(\frac{2}{\left(1+\lvert \underline{z}_c\rvert^2\right)^2}\right), \left(\underline{u}\nabla\underline{u}+\underline{v}\nabla\underline{v}\right)\right\rangle dx^1\wedge dx^2 \nonumber \\
&\quad +\int_{\mathbb{R}^2\backslash\mathbb{D}}\frac{2}{\left(1+\lvert \underline{z}_c\rvert^2\right)^2}\left(\lvert \underline{z}_c\rvert^4\langle \frac{1}{\bar{\underline{z}}_c}, \triangle\frac{1}{\bar{\underline{z}}_c}\rangle+ 2\lvert \underline{z}_c\rvert^2\langle \nabla\underline{z}_c, \nabla\frac{1}{\bar{\underline{z}}_c}\rangle\right) dx^1\wedge dx^2 \nonumber  \\
&= \int_{\mathbb{R}^2\backslash\mathbb{D}}\frac{4\langle \nabla\underline{z}_c, \nabla\underline{z}_c\rangle}{\left(1+\lvert \underline{z}_c\rvert^2\right)^2} dx^1\wedge dx^2 
 -\int_{\mathbb{R}^2\backslash\mathbb{D}}\frac{2\lvert \underline{z}_c\rvert^2}{1+\lvert \underline{z}_c\rvert^2}\left\rvert \nabla\ln\left(\frac{1+\lvert \underline{z}_c\rvert^2}{\lvert \underline{z}_c\rvert^2}\right)\right\rvert^2 dx^1\wedge dx^2 \nonumber \\
&\quad +\int_{\mathbb{R}^2\backslash\mathbb{D}}\frac{2}{\left(1+\lvert \underline{z}_c\rvert^2\right)^2}\left(\lvert \underline{z}_c\rvert^4\langle \frac{1}{\bar{\underline{z}}_c}, \triangle\frac{1}{\bar{\underline{z}}_c}\rangle\right) dx^1\wedge dx^2 \nonumber \\
&= \int_{\mathbb{R}^2\backslash\mathbb{D}}\left(\frac{4\lvert \underline{z}_c\rvert^4}{\left(1+\lvert \underline{z}_c\rvert^2\right)^2}\langle \nabla\frac{1}{\underline{z}_c}, \nabla\frac{1}{\underline{z}_c}\rangle -  \frac{2\lvert \underline{z}_c\rvert^2}{1+\lvert \underline{z}_c\rvert^2}\left\rvert \nabla\ln\left(\frac{1+\lvert \underline{z}_c\rvert^2}{\lvert \underline{z}_c\rvert^2}\right)\right\rvert^2 \right) dx^1\wedge dx^2 \nonumber \\
&\quad +\int_{\mathbb{R}^2\backslash\mathbb{D}}\frac{2}{\left(1+\lvert \underline{z}_c\rvert^2\right)^2}\left(\lvert \underline{z}_c\rvert^4\langle \frac{1}{\bar{\underline{z}}_c}, \triangle\frac{1}{\bar{\underline{z}}_c}\rangle\right) dx^1\wedge dx^2 \nonumber \\
&= \int_{\mathbb{R}^2\backslash\mathbb{D}} \left( \frac{4\langle \nabla\zeta, \nabla\zeta\rangle}{\left(1+\lvert \zeta\rvert^2\right)^2} - \frac{2}{1+\lvert \zeta\rvert^2}\left\rvert \nabla\ln\left(1+\lvert \zeta\rvert^2\right)\right\rvert^2 + \frac{2\langle \zeta, \triangle\zeta\rangle}{\left(1+\lvert \zeta\rvert^2\right)^2} \right) dx^1\wedge dx^2 \nonumber \\ 
&\equiv E_N(\zeta).  \label{identity.1} 
\end{align}
With the same arguments as for \eqref{ELn.11}, we have
\begin{align}
\frac{\langle \zeta, \triangle \zeta\rangle}{1+\lvert \zeta\rvert^2}=&\frac{1}{2}\left\rvert\nabla \ln\left(1+\lvert \zeta\rvert^2\right)\right\rvert^2-2\frac{\nu^2\lvert \nabla\mu\rvert^2-2\nu\mu\langle \nabla\nu, \nabla\mu\rangle +\mu^2\lvert \nabla\nu\rvert^2}{\left(1+\lvert \zeta\rvert^2\right)^2}. \label{identity.2}
\end{align}
Plugging \eqref{identity.2} into the energy $E_N(\zeta)$ of \eqref{identity.1}, if $\mu$ and $\nu$ both satisfy the minimal conditions \eqref{assumptions}, then $E_N(\zeta)$ reaches its minimum
\begin{align*}
E_N(\zeta) &= \int_{\mathbb{R}^2\backslash\mathbb{D}}\frac{4\langle \nabla\zeta, \nabla\zeta\rangle}{(1+\lvert \zeta\rvert^2)^2} dx^1\wedge dx^2 
  -\int_{\mathbb{R}^2\backslash\mathbb{D}}\frac{\left\rvert\nabla\ln(1+\lvert \zeta\rvert^2)\right\rvert^2}{1+\lvert \zeta\rvert^2} dx^1\wedge dx^2\\
&\quad -\int_{\mathbb{R}^2\backslash\mathbb{D}}\frac{4}{1+\lvert \zeta\rvert^2}\frac{\mu^2\lvert \nabla\nu\rvert^2-2\mu\nu\langle \nabla\mu, \nabla\nu\rangle +\nu^2\lvert \nabla\mu\rvert^2}{(1+\lvert \zeta\rvert^2)^2} dx^1\wedge dx^2.\\
&\geq \int_{\mathbb{R}^2\backslash\mathbb{D}}\frac{4 \langle \nabla\zeta, \nabla\zeta\rangle}{(1+\lvert \zeta\rvert^2)^2} dx^1\wedge dx^2 %\\
 -\int_{\mathbb{R}^2\backslash\mathbb{D}}\frac{\left\rvert\nabla\ln(1+\lvert \zeta\rvert^2)\right\rvert^2}{1+\lvert \zeta\rvert^2} dx^1\wedge dx^2. 
\end{align*}
\end{proof}

\subsection{Homothetic and minimal energies}
Since the conclusions of $E_S(z)$ in Theorem~\ref{lem:CT_Es} and $E_N(\underline{z})$ in Theorem~\ref{lem:CT_En} are similar, we will focus on the southern integral $E_S(z)$ in what follows. Assuming that $z$ is a solution of the Euler--Lagrange equations satisfying the minimal conditions \eqref{assumption:a} and \eqref{assumption:b}, we know that it determines the minimal Dirichlet energy. Furthermore, for closed surfaces of genus zero, the uniformization theorem \cite{ahlfors2010} also guarantees that the map minimizing the Dirichlet energy is a conformal map. Conversely, a map that reduces the Dirichlet energy does not necessarily improve conformality compared to the previous map. For example, the shrinking homothetic transformation does not enhance conformality, but it does reduce the Dirichlet energy. To compare conformality through the calculus of variations, we must study the Euler--Lagrange equations and assumptions \eqref{assumptions} more deeply.

For a positive $\rho\in\mathbb{R}_{+}$, we now consider a new north pole stereographic projection
\begin{align}
\label{Stereo}
\Pi_{\mathbb{S}^2_{\rho}}: (\mathbb{S}^2_{\rho}, \widetilde{\delta}) &\longrightarrow (\mathbb{R}^2, g_{\rho})\\
(f^1, f^2, f^3) & \longmapsto(u, v),\notag
\end{align}
where $g_{\rho}=\frac{4\rho^2}{\left(u^2+v^2+1\right)^2}\left(du^2+dv^2\right)$ is the induced metric.
The inverse of \eqref{Stereo} is given by
\begin{align*}
\Pi^{-1}_{\mathbb{S}^2_{\rho}}\left(\left(u, v\right)\right)=\rho\left(\Pi_{\mathbb{S}^2}^{-1}(u, v)\right).
\end{align*}
Then, the southern integral $E_S(z)$ with respect to $g_{\rho}$ in \eqref{DiriEs.8-2-1} becomes
\begin{align}
E_S(z; g_{\rho})=&\rho^2E_{S}\left(z; g\right) \nonumber \\ 
=&\int_{\mathbb{D}}\frac{2\rho^2}{1+\lvert z\rvert^2}\left\lvert \nabla\ln\left(1+\lvert z\rvert^2\right)\right\rvert^2 dx^1\wedge dx^2
-\int_{\mathbb{D}}\frac{2\rho^2\langle z, \triangle z\rangle}{(1+\lvert z\rvert^2)^2} dx^1\wedge dx^2 \nonumber \\ 
:=& \textup{I}+\textup{II}, \label{DiriE:rho}
\end{align}
which means that $E_S(z; g_{\rho})$ has the same Euler--Lagrange equations and minimum conditions as in \eqref{ELs} and \eqref{assumptions}.

The Dirichlet energy \eqref{DiriE:rho} is divided into an initial state $\textup{I}$ of \eqref{DiriE:rho} determined only by the induced metric $g_{\rho}$, as well as a residual term $\textup{II}$ of \eqref{DiriE:rho} controlled by the Euler--Lagrange equations \eqref{ELs} and the minimal conditions \eqref{assumptions}.
Next, we divide the Euler--Lagrange equations in \eqref{ELs} into two parts:
\begin{subequations}
\begin{align}
-\triangle u=&\underbrace{\frac{-2uu^2_{x^1}-2uu^2_{x^2}-2u_{x^1}v_{x^1}v-2u_{x^2}v_{x^2}v}{1+u^2+v^2}}_{\text{Homothetic}} \label{eq:Homothetic_u}\\
&\quad\quad+\underbrace{\frac{2uv^2_{x^1}+2uv^2_{x^2}-2u_{x^1}v_{x^1}v-2u_{x^2}v_{x^2}v}{1+u^2+v^2}}_{\text{Minimal}}, \label{eq:Minimal_u}\\
-\triangle v=&\underbrace{\frac{-2vv^2_{x^1}-2vv^2_{x^2}-2u_{x^1}v_{x^1}u-2u_{x^2}v_{x^2}u}{1+u^2+v^2}}_{\text{Homothetic}} \label{eq:Homothetic_v} \\
&\quad\quad+\underbrace{\frac{2vu^2_{x^1}+2vu^2_{x^2}-2u_{x^1}v_{x^1}u-2u_{x^2}v_{x^2}u}{1+u^2+v^2}}_{\text{Minimal}}. \label{eq:Minimal_v}
\end{align}
\end{subequations}
Plugging the homothetic terms in \eqref{eq:Homothetic_u} and \eqref{eq:Homothetic_v} and the minimal terms in \eqref{eq:Minimal_u} and \eqref{eq:Minimal_v} into $\textup{II}$ of \eqref{DiriE:rho}, we have 
\begin{align}
\textup{II} &= \int_{\mathbb{D}}\frac{2\rho^2\langle z, -\triangle z\rangle}{(1+\lvert z\rvert^2)^2} dx^1\wedge dx^2 \nonumber\\
&=\underbrace{-\int_{\mathbb{D}}\frac{\rho^2}{1+\lvert z\rvert^2}\left\lvert \nabla\ln\left(1+\lvert z\rvert^2\right)\right\rvert^2 dx^1\wedge dx^2}_{\text{Homothetic}}\nonumber \\
&\quad+\underbrace{\int_{\mathbb{D}}\frac{4\rho^2}{1+\lvert z\rvert^2}\frac{u^2\lvert \nabla v\rvert^2-2uv\langle \nabla u, \nabla v\rangle +v^2\lvert \nabla u\rvert^2}{(1+\lvert z\rvert^2)^2} dx^1\wedge dx^2}_{\text{Minimal}}. \label{integral}
\end{align}
It is clear that the minimal term of \eqref{integral} is nonnegative by the arithmetic--geometric mean and the Cauchy inequalities. Moreover, it reaches its minimum value of zero when the minimal conditions \eqref{assumptions} hold. On the other hand, the homothetic term of \eqref{integral} derived from the homothetic terms in \eqref{eq:Homothetic_u} and \eqref{eq:Homothetic_v} is always reduced by half of the initial state $\textup{I}$ of \eqref{DiriE:rho}. In summary, the Dirichlet energy \eqref{DiriE:rho} becomes
\begin{align} 
E_S(z; g_{\rho})&=\underbrace{\int_{\mathbb{D}}\frac{\rho^2}{1+\lvert z\rvert^2}\left\lvert \nabla\ln\left(1+\lvert z\rvert^2\right)\right\rvert^2 dx^1\wedge dx^2}_{\text{Homothetic}} \nonumber \\
&\quad+\underbrace{\int_{\mathbb{D}}\frac{4\rho^2}{1+\lvert z\rvert^2}\frac{u^2\lvert \nabla v\rvert^2-2uv\langle \nabla u, \nabla v\rangle +v^2\lvert \nabla u\rvert^2}{(1+\lvert z\rvert^2)^2} dx^1\wedge dx^2}_{\text{Minimal}} \label{ESr} \\
&\geq\int_{\mathbb{D}}\frac{\rho^2}{1+\lvert z\rvert^2}\left\lvert \nabla\ln\left(1+\lvert z\rvert^2\right)\right\rvert^2 dx^1\wedge dx^2. \nonumber 
\end{align}

\subsection{Equivalence class and Laplacian equations}
By the Euler--Lagrange equations \eqref{ELs} and the minimal conditions \eqref{assumptions}, the energy in \eqref{ESr} must be decreasing compared to that of the initial state $\textup{I}$ of \eqref{DiriE:rho}. However, the reduction of the first term of \eqref{integral} does not contribute to the conformality because the decrease in the integral can be modified back to the initial state $\textup{I}$ by a homothetic transformation. For example, we can replace $\rho$ with $\sqrt{2l}\hat{\rho}$, $l\in\mathbb{R}_{+}$, in \eqref{ESr}:
\begin{align}
E_S(z; g_{\sqrt{2l}\hat{\rho}})
&= 
 l\int_{\mathbb{D}}\frac{8\hat{\rho}^2}{1+\lvert z\rvert^2}\frac{u^2\lvert \nabla v\rvert^2-2uv\langle \nabla u, \nabla v\rangle +v^2\lvert \nabla u\rvert^2}{(1+\lvert z\rvert^2)^2} dx^1\wedge dx^2 \nonumber \\
& \quad + l \int_{\mathbb{D}}\frac{2\hat{\rho}^2}{1+\lvert z\rvert^2}\left\lvert \nabla\ln\left(1+\lvert z\rvert^2\right)\right\rvert^2 dx^1\wedge dx^2 \label{ES2lr} \\ 
&\geq l\int_{\mathbb{D}}\frac{2\hat{\rho}^2}{1+\lvert z\rvert^2}\left\lvert \nabla\ln\left(1+\lvert z\rvert^2\right)\right\rvert^2 dx^1\wedge dx^2.\nonumber
\end{align}
In other words, we deform the initial metric $g_{\rho}$ conformally into another metric $g_{\sqrt{2l}\hat{\rho}}$ on $\mathbb{R}^2$. Because $g_{\rho}$ and $g_{\sqrt{2l}\hat{\rho}}$ are conformal to each other and the deformation acts on the scaling of the ambient space $\mathbb{R}^2$ rather than the angular structure of the image, it does not improve the conformality of the mappings. To put it another way, it is meaningless to compare the change in the quantity $\int_{\mathbb{D}}\frac{\rho^2}{1+\lvert z\rvert^2}\left\lvert \nabla\ln\left(1+\lvert z\rvert^2\right)\right\rvert^2 dx^1\wedge dx^2$ because we can choose a sufficiently large $l$ so that the minimizer \eqref{ES2lr} is larger than any positive constant. In contrast, the first term of \eqref{ES2lr} is always zero under the minimum condition regardless of the change in $l$.

To compare the amount of change that is effective in improving angle retention, we consider the following admissible set $\mathcal{X}$, which achieves the minimal Dirichlet energy for any initial metric $g_{\rho}$:
\begin{equation*}
\mathcal{X}:=\left\lbrace (z, g_{\rho}) \;\middle\lvert \;
\begin{tabular}{@{}l@{}}
    $z$ satisfies \eqref{ELs} and minimal conditions \eqref{assumptions},\\
    $g_{\rho}$ is the metric induced on $\mathbb{C}$ by $\Pi_{\mathbb{S}^2_{\rho}}:\mathbb{S}^2_{\rho}\to\mathbb{C}$ 
\end{tabular}
\right\rbrace
\end{equation*}
We define the equivalence relation $\sim$ on $\mathcal{X}$ as follows: Given two elements $(z_1, g_{\rho_1})$ and $(z_2, g_{\rho_2})$ in $\mathcal{X}$, we say that
\begin{align}
\label{sim}
(z_1, g_{\rho_1})\sim(z_2, g_{\rho_2})
\end{align}
if and only if there is a $c:=\rho_1^2/\rho_2^2 \in\mathbb{R}_{+}$
such that
\begin{equation*}
    E_S(z_1; g_{\rho_1})=cE_S(z_2; g_{\rho_2}).
\end{equation*}
Hence, we define the Dirichlet energy of the equivalence class $\left(\mathcal{X}, \sim\right)$ as 
\begin{align} 
E_S(\mathcal{X})&=\left\{\int_{\mathbb{D}}\frac{c}{1+\lvert z\rvert^2}\left\lvert \nabla\ln\left(1+\lvert z\rvert^2\right)\right\rvert^2 dx^1\wedge dx^2\;\middle\lvert \; \forall c\in\mathbb{R}_{+}\right\} \nonumber \\
&:=\left[\int_{\mathbb{D}}\frac{1}{1+\lvert z\rvert^2}\left\lvert \nabla\ln\left(1+\lvert z\rvert^2\right)\right\rvert^2 dx^1\wedge dx^2\right]. \label{equiv:energy} 
\end{align}
Similarly, we can define the corresponding concepts $\underline{\mathcal{X}}$ and $E_N(\underline{\mathcal{X}})$ for the northern integral, and we omit the detailed formulation here. Under the equivalence relations \eqref{sim} and \eqref{equiv:energy} of Dirichlet energy, we have the statements in Theorem~\ref{eq:alternative} below for the southern and northern integrals. These equivalent equations have the same effectiveness in improving the conformality of the mapping in the minimization of the Dirichlet energy under the minimal conditions \eqref{assumptions}.
\begin{theorem}
\label{eq:alternative}
Minimizing $E_S$ of \eqref{DiriEs.8-2} and $E_N$ of \eqref{DiriEn.8-2} by Euler--Lagrange equations \eqref{ELs} with minimal conditions \eqref{assumptions} is equivalent to solving
\begin{subequations} \label{eq:homothetic_equivalence}
\begin{align}   
-\triangle \mathbf{u}&=\frac{2\mathbf{u}\mathbf{v}^2_{x^1} +2\mathbf{u}\mathbf{v}^2_{x^2} -2\mathbf{u}_{x^1}\mathbf{v}_{x^1}\mathbf{v} -2\mathbf{u}_{x^2}\mathbf{v}_{x^2}\mathbf{v}}{1+\mathbf{u}^2+\mathbf{v}^2}, \label{eq:homothetic_equivalence_u}\\ 
-\triangle \mathbf{v}&=\frac{2\mathbf{v}\mathbf{u}^2_{x^1} +2\mathbf{v}\mathbf{u}^2_{x^2} -2\mathbf{u}_{x^1}\mathbf{v}_{x^1}\mathbf{u} -2\mathbf{u}_{x^2}\mathbf{v}_{x^2}\mathbf{u}}{1+\mathbf{u}^2+\mathbf{v}^2} \label{eq:homothetic_equivalence_v}
\end{align}
\end{subequations}
with the same minimal conditions \eqref{assumptions}.
Regardless of whether the Euler--Lagrange equations \eqref{ELs} or the alternative equations \eqref{eq:homothetic_equivalence} are considered, the southern or northern integral can always be expressed as
\begin{align}
E_S(z)& = E_S(\mathcal{X})+\int_{\mathbb{D}}\frac{4}{1+\lvert z\rvert^2}\frac{u^2\lvert \nabla v\rvert^2-2uv\langle \nabla u, \nabla v\rangle +v^2\lvert \nabla u\rvert^2}{(1+\lvert z\rvert^2)^2} dx^1\wedge dx^2,
\end{align}
where $E_S(\mathcal{X})$ is defined in \eqref{equiv:energy}.
\end{theorem}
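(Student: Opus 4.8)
The plan is to prove the claim for the southern integral $E_S$; the northern case is obtained verbatim after the inversion $\zeta=\frac{1}{\bar{\underline{z}}_c}$ already used in Theorem~\ref{lem:CT_En}, with $E_S$ replaced by the expression for $E_N(\zeta)$ in \eqref{ELn.7}. Everything rests on one observation: term-by-term comparison of the right-hand sides of \eqref{eq:homothetic_equivalence_u}--\eqref{eq:homothetic_equivalence_v} with the ``Minimal'' pieces \eqref{eq:Minimal_u}--\eqref{eq:Minimal_v} of the Euler--Lagrange equations \eqref{ELs} shows that the alternative system \eqref{eq:homothetic_equivalence} is exactly ``$-\triangle u$ (resp. $-\triangle v$) equals only the Minimal part of \eqref{ELs}''. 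Hence I would feed each system into the same energy representation \eqref{DiriEs.8-2-1} and compare the outputs modulo the equivalence relation $\sim$ of \eqref{sim}.

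First I would treat the full Euler--Lagrange system. If $z=z_c$ solves \eqref{ELs}, then Theorem~\ref{lem:CT_Es}, precisely the identity \eqref{eq:Es_zc-1}, already expresses $E_S(z_c)$ as the sum of $\int_{\mathbb{D}}\frac{1}{1+\lvert z_c\rvert^2}\lvert\nabla\ln(1+\lvert z_c\rvert^2)\rvert^2\,dx^1\wedge dx^2$ and the Minimal integral $\int_{\mathbb{D}}\frac{4}{1+\lvert z_c\rvert^2}\frac{u^2\lvert\nabla v\rvert^2-2uv\langle\nabla u,\nabla v\rangle+v^2\lvert\nabla u\rvert^2}{(1+\lvert z_c\rvert^2)^2}\,dx^1\wedge dx^2$; by definition \eqref{equiv:energy} the first summand is the canonical representative of $E_S(\mathcal{X})$, so the decomposition asserted in the theorem holds in this case.

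Next I would repeat the computation for the alternative system \eqref{eq:homothetic_equivalence}. Multiplying \eqref{eq:homothetic_equivalence_u} by $u$ and \eqref{eq:homothetic_equivalence_v} by $v$ and adding gives $\langle z,-\triangle z\rangle=\frac{2\left(u^2\lvert\nabla v\rvert^2-2uv\langle\nabla u,\nabla v\rangle+v^2\lvert\nabla u\rvert^2\right)}{1+\lvert z\rvert^2}$. Substituting this into the second integral of \eqref{DiriEs.8-2-1} while leaving its first integral untouched yields $E_S(z)=2\int_{\mathbb{D}}\frac{1}{1+\lvert z\rvert^2}\lvert\nabla\ln(1+\lvert z\rvert^2)\rvert^2\,dx^1\wedge dx^2$ plus the same Minimal integral. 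The homothetic residual is now twice the one produced by \eqref{ELs}, but $2\int_{\mathbb{D}}\frac{1}{1+\lvert z\rvert^2}\lvert\nabla\ln(1+\lvert z\rvert^2)\rvert^2\,dx^1\wedge dx^2$ and $\int_{\mathbb{D}}\frac{1}{1+\lvert z\rvert^2}\lvert\nabla\ln(1+\lvert z\rvert^2)\rvert^2\,dx^1\wedge dx^2$ lie in the same class of \eqref{equiv:energy} (take $c=2$, i.e. rescale $\rho\mapsto\sqrt{2}\,\rho$), so once more $E_S(z)=E_S(\mathcal{X})+(\text{Minimal integral})$, now read as an identity of equivalence classes.

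It then follows that the two minimization problems coincide: imposing the minimal conditions \eqref{assumptions} kills the Minimal integral in both decompositions, leaving only the class $E_S(\mathcal{X})$, so a solution of \eqref{ELs} with \eqref{assumptions} and a solution of \eqref{eq:homothetic_equivalence} with \eqref{assumptions} both realize the minimum of the conformality-relevant energy $E_S(\mathcal{X})$; the displayed formula of the theorem is precisely the common decomposition, valid under either system, and the $E_N$ counterpart is obtained by running the same three steps on $E_N(\zeta)$ in \eqref{ELn.7}. The step I expect to demand the most care is the bookkeeping around $\sim$: one must verify explicitly that \eqref{eq:homothetic_equivalence} coincides with the Minimal part of \eqref{ELs}, and one must be precise that the factor-of-two gap between the two homothetic residuals is exactly what the relation $\sim$ quotients out, so that the theorem's identity is to be understood up to homothety rather than as a numerical equality of integrals.
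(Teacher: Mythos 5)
Your proposal is correct and follows essentially the same route as the paper's proof: identify \eqref{eq:homothetic_equivalence} as the ``Minimal'' part of \eqref{ELs}, substitute $\langle z,-\triangle z\rangle$ from each system into the energy representation to get the homothetic residual with coefficient $2$ (alternative system) versus $1$ (full Euler--Lagrange system), and absorb that factor into the equivalence class $E_S(\mathcal{X})$ via \eqref{sim}. Your explicit computation of $\langle z,-\triangle z\rangle$ by multiplying by $u$ and $v$ and adding is just a spelled-out version of the paper's appeal to \eqref{integral}, and the $E_N$ case is handled identically in both.
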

\begin{proof}
     Since $E_S$ and $E_N$ have the same arguments, we only prove the equivalence for $E_S$. Eqs. \eqref{eq:homothetic_equivalence_u} and \eqref{eq:homothetic_equivalence_v} are the minimal terms in \eqref{eq:Minimal_u} and \eqref{eq:Minimal_v}, respectively, which implies that residual term $\textup{II}$ of \eqref{DiriE:rho} is equal to the minimal term of \eqref{integral}. Plugging the minimal term of \eqref{integral} into \eqref{DiriE:rho} and taking $\rho = 1$, we obtain
\begin{align}
    E_{S}(z) &= \int_{\mathbb{D}}\frac{2}{1+\lvert z\rvert^2}\left\lvert \nabla\ln(1+\lvert z\rvert^2)\right\rvert^2 dx^1\wedge dx^2 \nonumber \\
&\quad +\int_{\mathbb{D}}\frac{4}{1+\lvert z\rvert^2}\frac{u^2\lvert \nabla v\rvert^2-2 u v \langle \nabla u, \nabla v \rangle +v^2\lvert  \nabla u\rvert^2}{(1+\lvert z\rvert^2)^2} dx^1\wedge dx^2 \nonumber \\
&= E_S(\mathcal{X})+\int_{\mathbb{D}}\frac{4}{1+\lvert z\rvert^2}\frac{u^2\lvert \nabla v\rvert^2-2 u v \langle \nabla u, \nabla v \rangle +v^2\lvert  \nabla u\rvert^2}{(1+\lvert z\rvert^2)^2} dx^1\wedge dx^2. \label{eq:Es_ztilde-equiv}
\end{align}
Under the equivalence relation, $E_S(z_c)$ in \eqref{eq:Es_zc-1} with $z_c = u + \mathrm{i} v$ can be expressed as
\begin{align}
E_S(z_c)&= \int_{\mathbb{D}}\frac{1}{1+\lvert z_c\rvert^2}\left\lvert \nabla\ln\left(1+\lvert z_c\rvert^2\right)\right\rvert^2 dx^1\wedge dx^2 \nonumber \\
&\quad +\int_{\mathbb{D}}\frac{4}{1+\lvert z_c\rvert^2}\frac{u^2\lvert \nabla v\rvert^2-2uv\langle \nabla u, \nabla v\rangle +v^2\lvert \nabla u\rvert^2}{\left(1+\lvert z_c\rvert^2\right)^2} dx^1\wedge dx^2 \nonumber\\
&= E_S(\mathcal{X})+\int_{\mathbb{D}}\frac{4}{1+\lvert z_c\rvert^2}\frac{u^2\lvert \nabla v\rvert^2-2uv\langle \nabla u, \nabla v\rangle +v^2\lvert \nabla u\rvert^2}{\left(1+\lvert z_c\rvert^2\right)^2} dx^1\wedge dx^2. \label{eq:Es_zc-equiv}
\end{align}
\end{proof}

\begin{remark} \label{remark_1}
Under the equivalence relations \eqref{sim} and \eqref{equiv:energy}, the energies \eqref{eq:Es_ztilde-equiv} and \eqref{eq:Es_zc-equiv} are the same.
 As long as the minimal conditions \eqref{assumptions} are satisfied, the energies $E_{S}(z)$ and $E_S(z_c)$ decrease to their minimum value $E_S(\mathcal{X})$.
In this argument, we use the north pole stereographic projection at a point $p\in\mathcal{S}$ to flatten a closed genus-zero surface onto $\mathbb{C}$; this mapping is smooth and bijective on $\mathcal{S}$ except at the blow-up point $p$. After the variation on $\mathbb{R}^2$ is used to reduce the Dirichlet energy, there is a one-to-one corresponding variation on $\mathcal{S}$, which decreases the Dirichlet energy. In other words, the surface $\widetilde{\mathcal{S}}$ corresponding to the minimizer of $E\left(\widetilde{w}\right)$ in \eqref{DiriE.5} must be conformally equivalent to a sphere, i.e., the minimizer of $E\left(\widetilde{z}\right)$ in \eqref{DiriE.6} up to a scaling. Assuming that this is not the case, we can find a nonspherical closed genus-zero surface that is conformal to a plane except at one point. This contradicts the uniformization theorem.

Let the area of $\mathcal{S}$ be $4\pi$. As discussed previously, Theorem~\ref{eq:alternative}, the Euler--Lagrange equations \eqref{ELs} and the minimal conditions \eqref{assumptions} correspond to the points varying from $\mathcal{S}$ to $\mathbb{S}^2_{\rho}$ with $\rho = 1/\sqrt{2}$. However, the alternative equations \eqref{eq:homothetic_equivalence} and the minimal conditions \eqref{assumptions} correspond to the points varying from $\mathcal{S}$ to $\mathbb{S}^2$. The benefits of considering the equations \eqref{eq:homothetic_equivalence} are as follows:
\begin{itemize}
\item As long as the Dirichlet energy is decreasing, the decrease can directly tell us by how much the conformality of the associated map is improved for a fixed surface area $4\pi$. 
\item The alternative equations \eqref{eq:homothetic_equivalence} with the minimal conditions \eqref{assumptions} are equivalent to solving the Laplacian equations that will be discussed below.
\end{itemize} 
\end{remark}
\begin{theorem} \label{thm:Laplace_zero}
   Assume that $u$ and $v$ satisfy \eqref{eq:homothetic_equivalence} with the minimal conditions \eqref{assumptions}. Then,
   \begin{align}
       \triangle u = 0, \quad \triangle v = 0, \label{eq:Laplace_zero}
   \end{align}
   except on a measure zero set $\mathcal{Z}:=\{x\in\mathbb{R}^2: u(x)=0\ \mbox{or}\ v(x)=0\}$.
\end{theorem}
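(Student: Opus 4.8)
The plan is to reduce the theorem to a purely local, algebraic statement: since $1+u^2+v^2>0$, the identities \eqref{eq:Laplace_zero} are equivalent to the vanishing of the numerators on the right-hand sides of \eqref{eq:homothetic_equivalence_u}--\eqref{eq:homothetic_equivalence_v}, which I write as
\begin{equation*}
N_u:=2\big(u\lvert\nabla v\rvert^2-v\langle\nabla u,\nabla v\rangle\big),\qquad
N_v:=2\big(v\lvert\nabla u\rvert^2-u\langle\nabla u,\nabla v\rangle\big),
\end{equation*}
and I want $N_u=N_v=0$ on $\mathbb{R}^2\setminus\mathcal{Z}=\{u\neq0\}\cap\{v\neq0\}$. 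First I would dispose of the subset where $\nabla u=0$: there \eqref{assumption:a} gives $u^2\lvert\nabla v\rvert^2=v^2\lvert\nabla u\rvert^2=0$, and since $u\neq0$ this forces $\nabla v=0$ too, whence every term of $N_u$ and $N_v$ carries a vanishing first-order factor and both numerators are zero. It then remains to work on the open set $\Omega:=\{x:u(x)\neq0,\ v(x)\neq0,\ \nabla u(x)\neq0\}$. On $\Omega$ the linear dependence \eqref{assumption:b} lets me write $\nabla v=\lambda\nabla u$ with the $C^1$ function $\lambda:=\langle\nabla u,\nabla v\rangle/\lvert\nabla u\rvert^2$, and $\lambda$ is nowhere zero on $\Omega$ (if $\lambda$ vanished at a point so would $\nabla v$, contradicting \eqref{assumption:a} since $v\neq0$ and $\nabla u\neq0$ there); substituting $\nabla v=\lambda\nabla u$ into \eqref{assumption:a} and cancelling $\lvert\nabla u\rvert^2$ gives the key algebraic relation $\lambda^2u^2=v^2$ on $\Omega$, i.e.\ $v=\lambda u$ or $v=-\lambda u$ at each point.

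Substituting $\nabla v=\lambda\nabla u$ into the numerators collapses them to $N_u=2\lambda\lvert\nabla u\rvert^2(\lambda u-v)$ and $N_v=-2\lvert\nabla u\rvert^2(\lambda u-v)$, so the entire theorem comes down to showing that only the branch $v=\lambda u$ occurs. This sign resolution is the step I expect to be the main obstacle, since $\lambda^2u^2=v^2$ alone permits $v=-\lambda u$, which would make $N_u,N_v$ nonzero. I see two ways around it. The clean route is to note that the term labelled ``Minimal'' in \eqref{ESr} is nonnegative only through the chain $u^2\lvert\nabla v\rvert^2+v^2\lvert\nabla u\rvert^2\ge2\lvert uv\rvert\lvert\nabla u\rvert\lvert\nabla v\rvert\ge2\lvert uv\langle\nabla u,\nabla v\rangle\rvert\ge2uv\langle\nabla u,\nabla v\rangle$; hence the minimal conditions, understood as the vanishing of that term, force, in addition to \eqref{assumption:a}--\eqref{assumption:b}, the sign constraint $uv\langle\nabla u,\nabla v\rangle\ge0$. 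Because $\langle\nabla u,\nabla v\rangle=\lambda\lvert\nabla u\rvert^2$ with $\lambda\neq0$ on $\Omega$, this says $uv\lambda\ge0$, which together with $v=\pm\lambda u$ excludes $v=-\lambda u$ (for then $uv\lambda=-\lambda^2u^2<0$). A self-contained alternative uses the equations directly: \eqref{eq:homothetic_equivalence_u}--\eqref{eq:homothetic_equivalence_v} give $\triangle u=-\lambda\triangle v$; taking the divergence of $\nabla v=\lambda\nabla u$ and using this gives $(1+\lambda^2)\triangle v=\langle\nabla\lambda,\nabla u\rangle$; and differentiating $\lambda^2u^2=v^2$ gives $u^2\nabla\lambda=(v-\lambda u)\nabla u$. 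Eliminating $\triangle v$ and $\nabla\lambda$ among these and \eqref{eq:homothetic_equivalence_v} leaves
\begin{equation*}
\lvert\nabla u\rvert^2\,(v-\lambda u)\left[\frac{1}{u^2}+\frac{2(1+\lambda^2)}{1+u^2+v^2}\right]=0\quad\text{on }\Omega,
\end{equation*}
and the bracket is strictly positive, forcing $v=\lambda u$.

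Once $v=\lambda u$ is known on $\Omega$, $\lambda u-v=0$ gives $N_u=N_v=0$ there, hence $\triangle u=\triangle v=0$; combined with the region $\{\nabla u=0\}$ handled earlier, this proves \eqref{eq:Laplace_zero} on all of $\mathbb{R}^2\setminus\mathcal{Z}$. Finally, $\mathcal{Z}$ is the union of the nodal sets of the coordinate functions $u$ and $v$, which are lower-dimensional in the parameterization setting, so $\mathcal{Z}$ has Lebesgue measure zero; and the whole argument applies verbatim to the pair $(\mu,\nu)$ of the northern integral, which satisfies \eqref{eq:homothetic_equivalence} and \eqref{assumptions} in the same form. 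The only regularity used beyond $u,v\in C^2$ is that \eqref{assumption:a}--\eqref{assumption:b}, assumed a.e., then hold on the open set $\Omega$ by continuity, which legitimises the differentiations above.
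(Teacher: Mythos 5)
Your proof is correct, and it takes a genuinely different route from the paper's --- in fact it closes a real gap in the published argument. The paper's proof multiplies \eqref{eq:homothetic_equivalence_u} by $u$, uses \eqref{assumption:a} to symmetrize the numerator into $u^2\lvert\nabla v\rvert^2-2uv\langle\nabla u,\nabla v\rangle+v^2\lvert\nabla u\rvert^2=\lvert u\nabla v-v\nabla u\rvert^2$, and then invokes the arithmetic--geometric mean and Cauchy inequalities to claim that the chain \eqref{identity.13} terminates in $0$. Under \eqref{assumptions} alone, however, those inequalities only give $u\nabla v=\pm v\nabla u$, and the last line of \eqref{identity.13} equals $2\lvert uv\rvert\bigl(\lvert\langle\nabla u,\nabla v\rangle\rvert-\langle\nabla u,\nabla v\rangle\bigr)/\bigl(u(1+u^2+v^2)\bigr)$, which vanishes only when $uv\,\langle\nabla u,\nabla v\rangle\geq 0$, i.e.\ on the $+$ branch; the paper never verifies this sign condition (and the intermediate inequality $-2uv\langle\nabla u,\nabla v\rangle\geq-2\lvert uv\rvert\langle\nabla u,\nabla v\rangle$ can even fail pointwise when $uv<0$ and $\langle\nabla u,\nabla v\rangle<0$). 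This is exactly the obstacle you isolate. Your second, self-contained resolution --- writing $\nabla v=\lambda\nabla u$ on $\Omega$, deducing $\lambda^2u^2=v^2$, differentiating it, and eliminating $\triangle v$ and $\nabla\lambda$ against both equations of \eqref{eq:homothetic_equivalence} to obtain $(v-\lambda u)\lvert\nabla u\rvert^2\bigl[u^{-2}+2(1+\lambda^2)/(1+u^2+v^2)\bigr]=0$ --- is correct (I have checked each step) and forces the $+$ branch, hence $N_u=N_v=0$ and \eqref{eq:Laplace_zero}; the only extra cost is the regularity $\lambda\in C^1(\Omega)$, which holds for $u,v\in C^2$, plus the separate (easy) treatment of $\{\nabla u=0\}$. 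Your first resolution is weaker: it amounts to strengthening the hypotheses to include $uv\langle\nabla u,\nabla v\rangle\geq 0$ (the true equality condition for the ``Minimal'' integrand), so by itself it proves a modified statement rather than the theorem as written, though it is arguably what the paper implicitly intends by the minimal conditions. One minor loose end, shared with the paper: neither argument actually proves that $\mathcal{Z}$ has measure zero; this is asserted rather than established.
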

\begin{proof}
 First, we consider the case of $u>0$. From \eqref{eq:homothetic_equivalence}, we have that
\begin{align}
-\triangle u &=\frac{2u^2\left(v^2_{x^1}+v^2_{x^2}\right)-2uv\left(u_{x^1}v_{x^1}+u_{x^2}v_{x^2}\right)}{u\left(1+u^2+v^2\right)} \nonumber \\
                &=\frac{ u^2\left(v^2_{x^1}+v^2_{x^2}\right)-2uv\left(u_{x^1}v_{x^1}+u_{x^2}v_{x^2}\right)+v^2\left(u^2_{x^1}+u^2_{x^2}\right)}{u\left(1+u^2+v^2\right)}\quad (\mbox{by \eqref{assumption:a}}) \nonumber \\
                &\geq \frac{2\left\lvert uv\right\rvert \sqrt{\left(v^2_{x^1}+v^2_{x^2}\right)\left(u^2_{x^1}+u^2_{x^2}\right)}-2uv\left(u_{x^1}v_{x^1}+u_{x^2}v_{x^2}\right)}{u\left(1+u^2+v^2\right)} \nonumber \\
                &\geq \frac{2\left\lvert uv\right\rvert \left(\sqrt{\left(v^2_{x^1}+v^2_{x^2}\right)\left(u^2_{x^1}+u^2_{x^2}\right)}-\left(u_{x^1}v_{x^1}+u_{x^2}v_{x^2}\right)\right)}{u\left(1+u^2+v^2\right)}\quad (\mbox{by \eqref{assumption:b}})  \nonumber \\
                &= 0. \label{identity.13}
\end{align}
Hence, $\triangle u = 0$ for $u > 0$.
Next, we consider $u<0$, i.e., $u:=-\hat{u}$, $\hat{u}>0$, which implies $\left\lvert \nabla u\right\rvert =\left\lvert \nabla \hat{u}\right\rvert $.
Then, we rewrite \eqref{eq:homothetic_equivalence} in the form of $\hat{u}$:
\begin{align*} 
\triangle \hat{u}&=- \frac{2\hat{u}v^2_{x^1} + 2\hat{u}v^2_{x^2} - 2\hat{u}_{x^1}v_{x^1}v - 2\hat{u}_{x^2}v_{x^2}v }{ 1+\hat{u}^2+v^2 }.
\end{align*}
Because $u$ and $v$ satisfy the minimal conditions \eqref{assumptions}, $\hat{u}$ and $v$ also satisfy the minimal conditions \eqref{assumptions}. Similar to the derivation of \eqref{identity.13}, we obtain $-\triangle\hat{u} \geq 0$ and then $0 = \triangle\hat{u} = - \triangle u$ for $u < 0$. Under the minimal conditions \eqref{assumptions}, we also have $\triangle v = 0$.
\end{proof}

\iffalse
\begin{remark}
\textcolor{red}{In this section, we study the conformal problem from a point-set perspective. In fact, the point-set and angle-based perspectives are equivalent. To relate the point-set and angle-based characterizations of the triangle, we examine its degrees of freedom. A triangle is a two-dimensional object, so the coordinates of the 3 vertices contribute $6$ degrees of freedom. Regarding the angle-based aspects, there are three angular variables that are invariant under rotation, translation in two directions, and homothetic transformation to the barycenter, and then the sum of the three angles is equal to $180^{\circ}$. They completely determine the degrees of freedom of the vertices.}
\end{remark}
\fi

\section{Convergence of MDEM for spherical conformal parameterizations} \label{sec:3}

A discrete model for a simply connected closed surface is a closed triangular mesh $\mathcal{S}$ of genus zero composed of vertices $\mathcal{V}(\mathcal{S})$, edges $\mathcal{E}(\mathcal{S})$, and faces $\mathcal{F}(\mathcal{S})$. The discrete Dirichlet energy associated with \eqref{DiriE.5} (see, e.g., Chap $10$ in \cite{GuYa08}) is defined by
\begin{subequations} \label{1}
\begin{equation} \label{1a}
E_D(f) = \frac{1}{2} \mathrm{trace}(\mathbf{f}^\top L_D \mathbf{f}),
\end{equation}
where $\mathbf{f}\in\mathbb{R}^{n\times 3}$ and $L_D \in \mathbb{R}^{n \times n}$ is a Laplacian matrix with
\begin{equation} \label{1b}
[L_D]_{ij} = 
\begin{cases}
w_{ij} \equiv  -\frac{1}{2} (\cot\alpha_{ij} + \cot \alpha_{ji}) & \text{if $[v_i,v_j]\in\mathcal{E}(\mathcal{S})$}, \\
-\sum_{\ell\neq i} w_{i\ell} & \text{if $i=j$}, \\
0 & \text{otherwise},
\end{cases}
\end{equation}
\end{subequations}
in which $n=\#\mathcal{V}(\mathcal{S})$ and $\alpha_{ij}$ and $\alpha_{ji}$ are two angles opposite to the edge $[v_i,v_j]$. 
A spherical conformal map from $\mathcal{V}(\mathcal{S})$ to $\mathbb{S}^2$ is to solve the optimization problem
\begin{equation}
\label{mintr}
    \min\left\{\text{trace}(\mathbf{f}^{\top}L_D\mathbf{f})\mid \mathbf{f}\in\mathbb{R}^{n\times 3}: \mathcal{V}(\mathcal{S})\to \mathbb{S}^2\right\}.
\end{equation}
The corresponding spherical conformal function from $\mathcal{S}$ to $\mathbb{S}^2$ is a piecewise linear function on $\mathcal{F}(\mathcal{S})$ determined by the barycentric coordinates on $\mathcal{V}(\mathcal{S})$.

As mentioned in Section \ref{sec:Intro}, since $\mathbb{S}^2$ is not convex, it is difficult to use the gradient projection method or the heat diffusion flow method for solving the optimization problem \eqref{mintr} with a constraint on $\mathbb{S}^2$ to guarantee convergence.

To address this, utilizing the stereographic projection to transform $\mathbb{S}^2$ onto $\overline{\mathbb{C}}$, a DEM was first proposed in \cite{YuLL19} and designed for the computation of a spherical conformal parameterization between $\mathcal{S}$ and $\mathbb{S}^2$ by alternatingly solving Laplacian equations \eqref{eq:Laplace_zero} corresponding to southern and northern hemispheres with various sizes, which may cause some difficulty in the mathematical proof of convergence.

Considering the theoretical foundation of the Dirichlet energy minimization on $\overline{\mathbb{C}}$ in Section \ref{sec:Thm_Found_DEM}, in this section, we propose an MDEM algorithm with a nonequivalence deflation technique for the computation of spherical conformal parameterizations of simply connected closed triangular meshes. In addition, we prove the asymptotically R-linear convergence of the MDEM algorithm.

We first propose the MDEM algorithm, which is slightly different from the DEM in \cite{YuLL19}. Let 
\begin{equation*}
    \mathbf{f}^{(1)}=[f_1^{(1)}, \cdots, f_n^{(1)}]^\top \in\mathbb{R}^{n\times 3}\quad \mbox{with $f_i^{(1)}\equiv [f_{i,1}^{(1)}, f_{i,2}^{(1)}, f_{i,3}^{(1)}]\in\mathbb{S}^2$}
\end{equation*} be an initial spherical conformal parameterization of $\mathcal{S}$ constructed by the method in \cite{AnHT99}. Let
\begin{align}
    {h}_i^{(1)} = \Pi_{\mathbb{S}^2}(f_i^{(1)})\equiv  \frac{{f}_{i,1}^{(1)}}{1-{f}_{i,3}^{(1)}} + \sqrt{-1}\frac{{f}_{i,2}^{(1)}}{1-{f}_{i,3}^{(1)}} \label{eq:ster_proj}
\end{align}
be the stereographic projection from ${f}_i^{(1)} \in \mathbb{S}^2$ onto ${h}_i^{(1)} \in \overline{\mathbb{C}}$. Given a radius $\rho\gtrsim 1$, we define
\begin{align}
\mathtt{I}_s=\{i\mid\lvert {h}_i^{(s)}\rvert < \rho\}, \ \mathtt{B}_s^{\prime}=\{1, \dots,n\}\backslash\mathtt{I}_s, \  \mathtt{B}_s = \{ j \mid [v_i, v_j ] \in \mathcal{E}(\mathcal{S}), i \in \mathtt{I}_s, j \in \mathtt{B}_s^{\prime}\}, \label{eq:idx_sets_I_Bprime_B}
\end{align}
$n_s = \# \mathtt{I}_s$ and $m_s = \# \mathtt{B}_s$, where the index $s$ indicates the southern and northern hemispheres.
In view of Theorem~\ref{thm:Laplace_zero}, for the minimization of the Dirichlet energy corresponding to the southern and northern hemispheres, the discrete Laplacian equation $L_D {\mathbf{h}} = 0$ must be solved alternatingly; i.e., for $s=1, 2,$ 
\begin{align}
      \begin{bmatrix}
            L_s & B_s & 0 \\
            B_s^{\top} & [L_D]_{\mathtt{B}_s,\mathtt{B}_s} & [L_D]_{\mathtt{B}_s, \mathtt{B}_s^c} \\
            0 & [L_D]_{\mathtt{B}_s^c,\mathtt{B}_s} & [L_D]_{\mathtt{B}_s^c, \mathtt{B}_s^c}
      \end{bmatrix}  \begin{bmatrix}
            \mathbf{h}_{\mathtt{I}_s} \\ \mathbf{h}_{\mathtt{B}_s} \\ \mathbf{h}_{\mathtt{B}_s^c} 
      \end{bmatrix} = 0 \label{eq:Laplacian_Sys}
\end{align}
with $\mathtt{B}_s^c = \mathtt{B}_s^{\prime}\backslash \mathtt{B}_s$, and
\begin{align} \label{2a}
L_s = [L_D]_{\mathtt{I}_s,\mathtt{I}_s} \in \mathbb{R}^{n_s\times n_s}, \quad B_s = [L_D]_{\mathtt{I}_s,\mathtt{B}_s} \in \mathbb{R}^{n_s\times m_s}.
\end{align}
For the iteration of the southern hemisphere, we set $\mathbf{h}_{\mathtt{B}_1^{\prime}} = [{h}_i^{(1)}]_{\mathtt{B}_1^{\prime}}$ in the first equation of \eqref{eq:Laplacian_Sys} with $s=1$ and solve
\begin{align}
      L_1 \mathbf{h}_{\mathtt{I}_1} = - B_1 \mathbf{h}_{\mathtt{B}_1}. \label{eq:Laplacian_LS_south}
\end{align}
Then, the inversion of $\mathbf{h}$ on $\overline{\mathbb{C}}$ is taken as
\begin{align}
      \mathbf{h}^{(2)} = \begin{bmatrix}
          {h}_1^{(2)} & \cdots & {h}_n^{(2)}
      \end{bmatrix}^{\top} \equiv  \mathrm{diag}(\lvert\mathbf{h}\rvert)^{-2}\mathbf{h} \label{eq:inversion_h}
\end{align}
for the northern hemisphere. We further set $\mathbf{h}_{\mathtt{B}_2^{\prime}} = \mathbf{h}^{(2)}_{\mathtt{B}_2^{\prime}}$ and solve the first equation of \eqref{eq:Laplacian_Sys} with $s=2$,
\begin{align}
      L_2 \mathbf{h}_{\mathtt{I}_2} = - B_2 \mathbf{h}_{\mathtt{B}_2}, \label{eq:Laplacian_LS_north}
\end{align}
for the iteration of the northern hemisphere. 

By taking the inversion of $\mathbf{h}$ again for the southern hemisphere, we repeat the southern and northern hemisphere iterations in \eqref{eq:Laplacian_LS_south} and \eqref{eq:Laplacian_LS_north} alternatingly until convergence. The DEM algorithm with fixed Laplacian matrix sizes $L_s$ in \eqref{eq:Laplacian_Sys} for the computation of spherical conformal parameterizations is stated as Algorithm~\ref{alg1}.

\begin{algorithm}
\caption{DEM for spherical conformal parameterizations}
\label{alg1}
\begin{algorithmic}[1]
\Require A closed triangular mesh $\mathcal{S}$ of genus zero and a radius $\rho \gtrsim 1$.
\Ensure A spherical conformal parameterization $\mathbf{f}$.
\State Compute an initial spherical parameterization $\mathbf{f}$ using \cite{AnHT99}. 
\State Compute the stereographic projection ${h}_{i}=\frac{{f}_{i,1}}{1- {f}_{i,3}}+\sqrt{-1}\frac{{f}_{i,2}}{1- {f}_{i,3}}$ for all $i$. 
\Repeat
\For{$s=1,2$} \label{alg:CEM_for_south_north}
\State Take the inversion $\mathbf{h}\gets\mathrm{diag}(\lvert\mathbf{h}\rvert)^{-2}\mathbf{h}$. \label{alg1:4} 
\State Take the index sets $\mathtt{I}_s$, $\mathtt{B}_s^{\prime}$ and $\mathtt{B}_s$ in \eqref{eq:idx_sets_I_Bprime_B}. \label{alg1:6}
\State Set up  $L_s$ and $B_s$  in \eqref{2a}.
\State Solve the linear system
$L_s\mathbf{h}_{\mathtt{I}_s} = -B_s  \mathbf{h}_{\mathtt{B}_s}$. \label{alg1:8}
\EndFor \label{alg:CEM_endfor_south_north}
\State Take the inverse stereographic projection ${f}_i\gets\Pi_{\mathbb{S}^2}^{-1}({h}_i)$ for all $i$. 
\State Compute the Dirichlet energy $E_D(f)$ in \eqref{1a}. 
\Until{Dirichlet energy converges}
\State \textbf{return} the spherical conformal parameterization $\mathbf{f}$.
\end{algorithmic}
\end{algorithm}

The numerical results in \cite{YuLL19} show the convergence of the DEM algorithm; however, a theoretical proof of convergence is still lacking. The index sets $\mathtt{I}_s$ and $\mathtt{B}_s$ for $s = 1, 2$ of \eqref{eq:idx_sets_I_Bprime_B} in \cite{YuLL19} change in each iteration, which increases the difficulty of proving convergence.

In what follows, we propose an efficient and reliable MDEM algorithm with nonequivalence deflation and prove its asymptotically R-linear convergence.

To fix the index sets in \eqref{eq:idx_sets_I_Bprime_B}, from \eqref{eq:Laplacian_LS_south}--\eqref{eq:Laplacian_LS_north}, 
 the $k$th southern and northern hemisphere iterations for $\mathbf{h}_{\mathtt{I}_s}$ and $\mathbf{h}_{\mathtt{B}_s}$ can be written as
\begin{subequations}
\begin{equation} \label{3}
\mathbf{h}_{\mathtt{I}_s}^{(k)} = -L_s^{-1}B_s\mathbf{h}_{\mathtt{B}_s}^{(k)} ~~\text{ and }~~
\mathbf{h}_{\mathtt{B}_{s+1}}^{(k+1)} = \mathrm{diag}(\lvert P_s\mathbf{h}_{\mathtt{I}_s}^{(k)}\rvert)^{-2} P_s\mathbf{h}_{\mathtt{I}_s}^{(k)}, \quad s=1, 2,
\end{equation}
where
\begin{equation} \label{2b}
P_1 = [I_n]_{\mathtt{B}_{2},\mathtt{I}_1} \in \mathbb{R}^{m_2\times n_1},  \quad P_2 = [I_n]_{\mathtt{B}_{1},\mathtt{I}_2} \in \mathbb{R}^{m_1\times n_2}.
\end{equation}
\end{subequations} 
For convenience, denoting $\mathbf{h}_s^{(k)}:=\mathbf{h}_{\mathtt{B}_s}^{(k)}$, the iterations in \eqref{3} can be simplified to
\begin{subequations} \label{4}
\begin{align} 
\mathbf{h}_2^{(k+1)} &= \mathrm{diag}(\lvert A_1 \mathbf{h}_1^{(k)}\rvert)^{-2} A_1 \mathbf{h}_1^{(k)}, \label{4a}\\
\mathbf{h}_1^{(k+2)} &= \mathrm{diag}(\lvert A_2 \mathbf{h}_2^{(k+1)}\rvert)^{-2} A_2 \mathbf{h}_2^{(k+1)}, \label{4b}
\end{align}
where $A_1 \in \mathbb{R}^{m_2 \times m_1}$ and $A_2 \in \mathbb{R}^{m_1 \times m_2}$ with
\begin{equation} \label{2c}
A_s = -P_s L_s^{-1} B_s, ~s=1,2.
\end{equation}
\end{subequations}

In discussing the convergence of the MDEM algorithm, we consider the critical issue for the convergence of iterations in \eqref{4}. If we ignore the diagonal matrices in \eqref{4a} and \eqref{4b}, then $\mathbf{h}_1^{(k+2)}$ can be written as 
\begin{align}
\mathbf{h}_1^{(k+2)} = A_2 A_1 \mathbf{h}_1^{(k+1)}. \label{eq:simp_h1} 
\end{align}
Let $\mathbf{1}_{\ell} = [ 1, \cdots, 1]^{\top} \in \mathbb{R}^{\ell}$. From \eqref{1b} and \eqref{2a}, we have
\begin{align*}
      L_s \mathbf{1}_{n_s} + B_s \mathbf{1}_{m_s} = 0, \quad \mbox{i.e., } \quad \mathbf{1}_{n_s} = - L_s^{-1} B_s \mathbf{1}_{m_s},
\end{align*}
which implies that
\begin{align}
      A_2 A_1 \mathbf{1}_{m_1} = A_2 P_1 \mathbf{1}_{n_1} = A_2 \mathbf{1}_{m_2} = P_2 \mathbf{1}_{n_2} =  \mathbf{1}_{m_1}. \label{eq:ew_1_A2A1}
\end{align}
Since $L_s$, $s=1,2$, are M-matrices (i.e., $L_s^{-1} \geqslant 0$), it is easily seen that $A_1 \geq 0$, $A_2 \geq 0$, and $A_2 A_1 \geq 0$ are nonnegative and $A_2 A_1$ is irreducible. From the Perron--Frobenius theorem \cite[Chap. 8]{meye:2000} and \eqref{eq:ew_1_A2A1}, we have
$\rho(A_2A_1) = 1$, which implies that the convergence of the iteration in \eqref{eq:simp_h1} cannot be guaranteed. Consequently, to ensure the convergence of \eqref{eq:simp_h1}, a nonequivalence deflation technique to move the eigenvalue $1$ to zero is needed.

Let $[\mathbf{q}_1^{\top}, \mathbf{q}_2^{\top}]^{\top}$ with $\mathbf{q}_1 \in \mathbb{R}^{m_2}$ and $\mathbf{q}_2 \in \mathbb{R}^{m_1}$ be the left eigenvector of
\begin{align}
     \mathcal{A} \equiv \begin{bmatrix} 0 & A_1 \\ A_2 & 0 \end{bmatrix} \label{eq:mtx_A}
\end{align}
corresponding to the eigenvalue $\lambda$. That is,
\begin{align}
      \mathbf{q}_2^{\top} A_2 = \lambda \mathbf{q}_1^{\top}, \quad \mathbf{q}_1^{\top} A_1 = \lambda \mathbf{q}_2^{\top} \ \Rightarrow \ \mathbf{q}_2^{\top} A_2 A_1 = \lambda^2 \mathbf{q}_2^{\top}, \label{eq:left_ev_A2A1}
\end{align}
which implies that $\mathcal{A}$ has eigenvalues $\pm 1$.

\begin{theorem} \label{thm:noonequivalent_deflation}
Let $[\mathbf{q}_1^{\top}, \mathbf{q}_2^{\top}]^{\top}$ with $\mathbf{q}_1 \in \mathbb{R}^{m_2}$, $\mathbf{q}_2 \in \mathbb{R}^{m_1}$ and $\mathbf{q}_2^{\top} \mathbf{1}_{m_1} = 1$ be the left eigenvector of $\mathcal{A}$ 
in \eqref{eq:mtx_A} corresponding to the eigenvalue $1$. Then, 
\begin{align}
\sigma((A_2 - \mathbf{1}_{m_1} \mathbf{q}_1^{\top})A_1) = \sigma(A_2 A_1 - \mathbf{1}_{m_1} \mathbf{q}_2^{\top})= (\sigma(A_2 A_1) \backslash \{ 1\}) \cup \{ 0 \} \label{eq:spectrum_deflated}
\end{align} 
and $\rho((A_2 - \mathbf{1}_{m_1} \mathbf{q}_1^{\top}) A_1) = \rho(A_2 A_1 - \mathbf{1}_{m_1} \mathbf{q}_2^{\top}) < 1$. 
\end{theorem}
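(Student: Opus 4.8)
The plan is to establish the two claimed spectral identities separately and then bound the spectral radius of the deflated matrix. First I would observe that the equality $\sigma((A_2 - \mathbf{1}_{m_1}\mathbf{q}_1^{\top})A_1) = \sigma(A_2A_1 - \mathbf{1}_{m_1}\mathbf{q}_2^{\top})$ follows immediately from expanding the product: since $\mathbf{q}_1^{\top}A_1 = \mathbf{q}_2^{\top}$ by \eqref{eq:left_ev_A2A1} with $\lambda = 1$, we have $(A_2 - \mathbf{1}_{m_1}\mathbf{q}_1^{\top})A_1 = A_2A_1 - \mathbf{1}_{m_1}\mathbf{q}_2^{\top}$ as matrices, not merely spectrally. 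So the real content is the second equality in \eqref{eq:spectrum_deflated} and the subsequent strict bound.

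For the identity $\sigma(A_2A_1 - \mathbf{1}_{m_1}\mathbf{q}_2^{\top}) = (\sigma(A_2A_1)\backslash\{1\})\cup\{0\}$, I would use the standard rank-one deflation argument. The matrix $A_2A_1$ is nonnegative and irreducible with $\rho(A_2A_1) = 1$ (established in the text via Perron--Frobenius and \eqref{eq:ew_1_A2A1}), so $1$ is a simple eigenvalue; its right eigenvector is $\mathbf{1}_{m_1}$ by \eqref{eq:ew_1_A2A1}, and $\mathbf{q}_2$ (the relevant block of the left eigenvector of $\mathcal{A}$, normalized so $\mathbf{q}_2^{\top}\mathbf{1}_{m_1} = 1$) is the corresponding left eigenvector, as \eqref{eq:left_ev_A2A1} shows $\mathbf{q}_2^{\top}A_2A_1 = \mathbf{q}_2^{\top}$. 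Then the Hotelling/Wielandt deflation lemma gives that $A_2A_1 - \mathbf{1}_{m_1}\mathbf{q}_2^{\top}$ has the same eigenvalues as $A_2A_1$ except the simple eigenvalue $1$ is replaced by $0$. Concretely, I would verify this by writing $A_2A_1$ in a basis whose first vector is $\mathbf{1}_{m_1}$: in block form the subtraction of $\mathbf{1}_{m_1}\mathbf{q}_2^{\top}$ kills the $(1,1)$ entry and, using $\mathbf{q}_2^{\top}\mathbf{1}_{m_1}=1$ together with $\mathbf{q}_2$ being a left eigenvector, leaves the rest of the block-triangular structure intact. Alternatively one can just check directly that for any eigenpair $(\mu, \mathbf{w})$ of $A_2A_1$ with $\mu \neq 1$, the vector $\mathbf{w} - \tfrac{\mathbf{q}_2^{\top}\mathbf{w}}{\mu - 1}\mathbf{1}_{m_1}$ (or a similar correction) is an eigenvector of the deflated matrix with eigenvalue $\mu$, and that $\mathbf{1}_{m_1}$ is now in the kernel.

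Finally, for $\rho(A_2A_1 - \mathbf{1}_{m_1}\mathbf{q}_2^{\top}) < 1$, the spectral identity reduces this to showing that every eigenvalue $\mu$ of $A_2A_1$ other than the Perron eigenvalue $1$ satisfies $|\mu| < 1$. This is exactly the statement that $\rho(A_2A_1) = 1$ is attained \emph{only} by the Perron eigenvalue — i.e., there is no other eigenvalue on the unit circle. The standard route is the Perron--Frobenius theory for irreducible nonnegative matrices: if $A_2A_1$ were merely nonnegative and irreducible, it could be imprimitive (periodic) and have several eigenvalues of modulus $1$, so one needs a little more. The cleanest argument is to note that $A_2A_1$ has at least one positive diagonal entry — indeed $A_2A_1 = (-P_2L_2^{-1}B_2)(-P_1L_1^{-1}B_1)$ and, since $L_s^{-1} > 0$ entrywise (as $L_s$ is a nonsingular M-matrix with irreducible off-diagonal pattern) and $B_s, P_s \geq 0$ have no zero rows, the product has strictly positive diagonal — hence $A_2A_1$ is primitive (aperiodic). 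For a primitive matrix the Perron eigenvalue is the unique eigenvalue of maximum modulus, giving $|\mu| < 1$ for all $\mu \neq 1$ and therefore $\rho(A_2A_1 - \mathbf{1}_{m_1}\mathbf{q}_2^{\top}) < 1$. \textbf{The main obstacle} I anticipate is precisely this last point: carefully justifying primitivity (or otherwise ruling out peripheral eigenvalues) of $A_2A_1$ from the mesh structure — one must argue that the index coupling between $\mathtt{B}_1$ and $\mathtt{B}_2$ through the interiors $\mathtt{I}_1, \mathtt{I}_2$ is rich enough that $A_2A_1$ is not periodic; the positive-diagonal observation is the slick way to close this gap, but it requires knowing that $P_s$ picks out rows of $L_s^{-1}B_s$ that overlap appropriately, which should follow from $\mathtt{B}_{s+1} \subseteq \mathtt{I}_s$ being a nonempty set of interior indices of the other hemisphere.
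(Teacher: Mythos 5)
Your argument for the spectral identity is essentially the paper's own: the first equality is the matrix identity $(A_2-\mathbf{1}_{m_1}\mathbf{q}_1^{\top})A_1=A_2A_1-\mathbf{1}_{m_1}\mathbf{q}_2^{\top}$ coming from $\mathbf{q}_1^{\top}A_1=\mathbf{q}_2^{\top}$, and the second is the standard rank-one (Hotelling--Wielandt) deflation, which the paper carries out exactly as you sketch --- $\mathbf{1}_{m_1}$ is sent to zero, and any right eigenvector $\mathbf{p}$ of $A_2A_1$ with eigenvalue $\lambda\neq 1$ automatically satisfies $\mathbf{q}_2^{\top}\mathbf{p}=0$, so it remains an eigenvector of the deflated matrix with the same eigenvalue (no correction term of the form $\mathbf{w}-\frac{\mathbf{q}_2^{\top}\mathbf{w}}{\mu-1}\mathbf{1}_{m_1}$ is needed in this direction; orthogonality to $\mathbf{q}_2$ is forced by $(\mu-1)\mathbf{q}_2^{\top}\mathbf{w}=0$). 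Where you genuinely go beyond the paper is the final claim $\rho(A_2A_1-\mathbf{1}_{m_1}\mathbf{q}_2^{\top})<1$: the paper deduces it directly from the spectral identity, which tacitly assumes that $1$ is the \emph{only} eigenvalue of $A_2A_1$ on the unit circle. As you correctly observe, irreducibility of a nonnegative matrix with spectral radius $1$ does not exclude other peripheral eigenvalues (an imprimitive matrix has several), so an extra argument is required, and the paper supplies none. Your primitivity argument closes this gap, and in fact more is true than a positive diagonal: since $L_s^{-1}>0$ entrywise when the interior submesh on $\mathtt{I}_s$ is connected, and $-B_s\geq 0$ has no zero column (each $j\in\mathtt{B}_s$ is by definition adjacent to some $i\in\mathtt{I}_s$), one gets $A_1>0$ and $A_2>0$ entrywise under the paper's standing sign assumptions, hence $A_2A_1>0$ is primitive and $1$ is the unique eigenvalue of maximum modulus. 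This is a worthwhile strengthening of the published proof; the only care needed is to state the connectivity and Delaunay (nonnegative cotangent weight) hypotheses under which the positivity holds.
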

\begin{proof}
From \eqref{eq:left_ev_A2A1} with $\lambda = 1$, $\mathbf{q}_2$ is the left eigenvector of $A_2 A_1$ corresponding to the eigenvalue $1$. Since $A_2 A_1 \geq 0$ are nonnegative and irreducible, from the Perron--Frobenius theorem, we have $\mathbf{q}_2^{\top} > 0$. Then, $\mathbf{q}_1^{\top} > 0$, from $A_2 \geq 0$ and \eqref{eq:left_ev_A2A1}. By the assumption that $\mathbf{q}_2^{\top} \mathbf{1}_{m_1} = 1$, we have that
\begin{align*}
     (A_2 A_1 - \mathbf{1}_{m_1} \mathbf{q}_2^{\top}) \mathbf{1}_{m_1} = 0.
\end{align*}
Let $\mathbf{p}$ be the right eigenvector of $A_2 A_1$ corresponding to the eigenvalue $\lambda \neq 1$. Then, $\mathbf{q}_2^{\top} \mathbf{p} = 0$ and $(A_2 A_1 - \mathbf{1}_{m_1} \mathbf{q}_2^{\top}) \mathbf{p} = \lambda \mathbf{p}$, which implies the result in \eqref{eq:spectrum_deflated}. This completes the proof of this theorem.
\end{proof}

Using the nonequivalence deflation in Theorem~\ref{thm:noonequivalent_deflation}, the iterations in \eqref{4} can be modified to
\begin{subequations}  \label{eq:nonequ_defl_iter}
\begin{align} 
\mathbf{h}_2^{(k+1)} &= \mathrm{diag}(\lvert A_1 \mathbf{h}_1^{(k)}\rvert)^{-2} A_1 \mathbf{h}_1^{(k)}, \label{eq:iter_h2}\\
\mathbf{h}_1^{(k+2)} &= \mathrm{diag}(\lvert \widehat{A}_2 \mathbf{h}_2^{(k+1)}\rvert)^{-2} \widehat{A}_2 \mathbf{h}_2^{(k+1)}, \label{eq:defl_iter_h1}
\end{align} 
where
\begin{align}
      \widehat{A}_2 = A_2 - \mathbf{1}_{m_1}\mathbf{q}_1^{\top}. \label{eq:A2_tilde}
\end{align}
\end{subequations}
In light of the equivalence representation in Theorem \ref{eq:alternative} for the energies $E_S(z)$ in \eqref{DiriEs.8-2-1} and $E_N(\zeta)$ in \eqref{identity.1}, the minimizer of $E\left(\widetilde{z}\right)$ in \eqref{Dirichlet:Z} is equivalent up to a scaling, so we modify \eqref{eq:nonequ_defl_iter} to
\begin{subequations} \label{eq:scaling_iteration}
\begin{align} 
\mathbf{h}_2^{(k+1)} &= c_k \mathrm{diag}(\lvert A_1 \mathbf{h}_1^{(k)}\rvert)^{-2} A_1 \mathbf{h}_1^{(k)}, \label{eq:scaling_iteration_s}\\
\mathbf{h}_1^{(k+2)} &= c_{k+1} \mathrm{diag}(\lvert \widehat{A}_2 \mathbf{h}_2^{(k+1)}\rvert)^{-2} \widehat{A}_2 \mathbf{h}_2^{(k+1)}, \label{eq:scaling_iteration_n}
\end{align}
where
\begin{align}
     c_k = \max_{1 \leq i \leq m_2} \{ \lvert  A_1 \mathbf{h}_1^{(k)} \rvert_i \}, \quad c_{k+1} = \max_{1 \leq i \leq m_1} \{ \lvert  \widehat{A}_2 \mathbf{h}_2^{(k+1)} \rvert_i \} \label{eq:scaling}
\end{align}
\end{subequations}
such that $\lvert  A_1 \mathbf{h}_1^{(k)}  / c_k \rvert_i \leq 1$ and $\lvert \widehat{A}_2 \mathbf{h}_2^{(k+1)}  / c_{k+1} \rvert_i \leq 1$ for all $i$.

When the iterations of \eqref{eq:scaling_iteration} are convergent, one swap of the southern and northern hemisphere iterations without scaling is used to reconstruct $\mathbf{h}$. Then, the inverse stereographic projection for $\mathbf{h}$ can be applied to construct the spherical conformal parameterization $\mathbf{f}$. We summarize the MDEM with nonequivalence deflation for the computation of the spherical conformal map for $\mathcal{S}$ in Algorithm \ref{alg:CEM_defl}.

Now, we show that the MDEM with nonequivalence deflation converges asymptotically and R-linearly under some mild conditions. Based on the equivalence relations \eqref{eq:Es_ztilde-equiv} and \eqref{eq:Es_zc-equiv}, the scalings $c_k$ and $c_{k+1}$ in \eqref{eq:scaling_iteration} do not affect the minimization; for simplicity, we show the convergence of the iterations in \eqref{eq:nonequ_defl_iter}.
Suppose $\mathbf{f}^{(*)}$ is the unique minimizer of $\eqref{1a}$ and that the corresponding boundary points $\mathbf{h}_{\mathtt{B}_1}^{(*)}$ and $\mathbf{h}_{\mathtt{B}_2}^{(*)}$ satisfy
\begin{subequations}
\begin{align} 
\mathbf{h}_{\mathtt{B}_2}^{(*)}&\equiv \mathbf{h}_2^{(*)} = \mathrm{diag}(\lvert A_1 \mathbf{h}_1^{(*)}\rvert)^{-2} A_1 \mathbf{h}_1^{(*)}, \label{5a}\\
\mathbf{h}_{\mathtt{B}_1}^{(*)}&\equiv \mathbf{h}_1^{(*)} = \mathrm{diag}(\lvert \widehat{A}_2 \mathbf{h}_2^{(*)}\rvert)^{-2} \widehat{A}_2 \mathbf{h}_2^{(*)}. \label{5b}
\end{align}
\end{subequations}
In what follows, we aim to show $\mathbf{h}_s^{(k)}\to\mathbf{h}_s^{(*)}$, $s=1,2$, to be asymptotically R-linear convergent.

\begin{theorem} \label{thm:convergence}
Let $\mathcal{S}$ be a closed surface of genus zero with a normalized area of one that is equipped with a Delaunay triangular mesh. 
Suppose that there exists a sufficiently small $\eta > 0$ such that
\begin{subequations} \label{eq:assumption_convergence}
\begin{align}
    1-\eta\leqslant\lvert A_1\mathbf{h}_1^{(k)}\rvert_i\leqslant 1, \quad 1-\eta\leqslant\lvert \widehat{A}_2\mathbf{h}_2^{(k)}\rvert_i\leqslant 1 \label{eq:assumption_convergence_1}
\end{align}
for all $i$, and 
\begin{align}
\rho(\gamma^2 \lvert \widehat{A}_2 \rvert A_1) <1, \label{eq:assumption_convergence_2}
\end{align}
\end{subequations}
where $\gamma = 1/(1 - \eta)^2$; then, we have $k^*\in\mathbb{N}$ that is sufficiently large and $r^*<1$ such that $\|\mathbf{h}_s^{(k)}-\mathbf{h}_s^{(*)}\|_\infty^{1/k}\leqslant r^* <1$ for all $k\geqslant k^*$, $s=1,2$; i.e., $\mathbf{h}_s^{(k)}\to\mathbf{h}_s^{(*)}$ converges asymptotically and R-linearly.
\end{theorem}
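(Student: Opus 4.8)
The plan is to exploit the very special form of the nonlinearity in \eqref{eq:nonequ_defl_iter}. Write $\phi$ for the componentwise operation $w\mapsto w/|w|^2=1/\bar w$; then the (unscaled) iteration reads $\mathbf{h}_2^{(k+1)}=\phi(A_1\mathbf{h}_1^{(k)})$ and $\mathbf{h}_1^{(k+2)}=\phi(\widehat{A}_2\mathbf{h}_2^{(k+1)})$, and by Theorem~\ref{eq:alternative} the scalings $c_k,c_{k+1}$ in \eqref{eq:scaling_iteration} do not change the fixed point, so it suffices to analyze this unscaled recursion as already reduced before the theorem statement. The crucial observation is the \emph{exact} identity $\bigl|1/\bar w-1/\bar{w^{*}}\bigr|=|w-w^{*}|/(|w|\,|w^{*}|)$, valid for any nonzero $w,w^{*}\in\mathbb{C}$; applied entrywise it turns the nonlinear error propagation into a clean entrywise‐monotone linear comparison, so no linearization (Ostrowski/Lyapunov) error needs to be controlled.

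Concretely, I would set $\mathbf{e}_s^{(k)}=\mathbf{h}_s^{(k)}-\mathbf{h}_s^{(*)}$ and, using the identity above with $w=(A_1\mathbf{h}_1^{(k)})_i$, $w^{*}=(A_1\mathbf{h}_1^{(*)})_i$ together with the lower bounds in \eqref{eq:assumption_convergence_1} applied to both the iterate and the fixed point (the latter inherited from \eqref{eq:assumption_convergence_1} either as a standing hypothesis or as the common limit of iterates satisfying it), derive the entrywise bounds
\[
|\mathbf{e}_2^{(k+1)}|\;\le\;\gamma\,A_1\,|\mathbf{e}_1^{(k)}|,\qquad |\mathbf{e}_1^{(k+2)}|\;\le\;\gamma\,|\widehat{A}_2|\,|\mathbf{e}_2^{(k+1)}|,
\]
where I use $A_1\ge 0$ (an $M$-matrix product, as noted in the paper and guaranteed by the Delaunay hypothesis that yields $L_s^{-1}\ge 0$) so that $|A_1\mathbf{e}|\le A_1|\mathbf{e}|$, and likewise $|\widehat{A}_2\mathbf{e}|\le|\widehat{A}_2|\,|\mathbf{e}|$. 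Composing two steps gives $|\mathbf{e}_1^{(k+2)}|\le T\,|\mathbf{e}_1^{(k)}|$ with $T:=\gamma^{2}\,|\widehat{A}_2|\,A_1\ge 0$, and hypothesis \eqref{eq:assumption_convergence_2} is exactly $\rho(T)<1$.

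It then remains to convert the entrywise decay $|\mathbf{e}_1^{(k_0+2j)}|\le T^{j}|\mathbf{e}_1^{(k_0)}|$ into the claimed root estimate. Taking $\|\cdot\|_\infty$ and invoking Gelfand's formula $\|T^{j}\|_\infty^{1/j}\to\rho(T)$, I obtain $\|\mathbf{e}_1^{(k)}\|_\infty\le D\,(\rho(T)+\varepsilon)^{k/2}$ for a constant $D$ and any fixed $\varepsilon>0$ with $\rho(T)+\varepsilon<1$; hence $\limsup_k\|\mathbf{e}_1^{(k)}\|_\infty^{1/k}\le\rho(T)^{1/2}<1$, and the one-step bound $\|\mathbf{e}_2^{(k+1)}\|_\infty\le\gamma\|A_1\|_\infty\|\mathbf{e}_1^{(k)}\|_\infty$ propagates the same rate to $\mathbf{h}_2$. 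Choosing any $r^{*}\in(\rho(T)^{1/2},1)$ and $k^{*}$ sufficiently large yields the statement; all norm changes between $\|\cdot\|_\infty$ and the intermediate estimates affect only constants, not the exponential rate. The factor $\tfrac12$ in the exponent simply records that each $\mathbf{h}_s$ is updated at every second index, so the honest asymptotic rate is $\sqrt{\rho(T)}$.

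I expect the main obstacle to be conceptual rather than computational: recognizing that the componentwise inversion admits the exact multiplicative error identity, which is what lets one bypass a local fixed-point analysis and replace it by entrywise Perron--Frobenius monotonicity of $T$. A secondary point requiring care is the precise role of the three hypotheses — the Delaunay condition (to secure $A_s\ge 0$ and $L_s^{-1}\ge 0$), the uniform two-sided bound \eqref{eq:assumption_convergence_1} (to control the denominators $|A_1\mathbf{h}_1^{(k)}|_i\,|A_1\mathbf{h}_1^{(*)}|_i$ from below by $\gamma^{-1}$), and especially \eqref{eq:assumption_convergence_2}, which is genuinely an assumption because the deflated block $\widehat{A}_2=A_2-\mathbf{1}_{m_1}\mathbf{q}_1^{\top}$ has negative entries, so $|\widehat{A}_2|$ may dominate $\widehat{A}_2$ entrywise and $\rho(\gamma^{2}|\widehat{A}_2|A_1)$ need not be below $1$ a priori even though $\rho(\widehat{A}_2A_1)<1$ by Theorem~\ref{thm:noonequivalent_deflation}. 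One should also confirm at the outset that the unscaled iteration indeed has $\mathbf{h}_s^{(*)}$ as its fixed point, as recorded in \eqref{5a}--\eqref{5b}, so that the error identity may legitimately be centered there.
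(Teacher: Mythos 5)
Your proposal is correct and follows essentially the same route as the paper's proof: the exact inversion identity $\lvert 1/\bar w-1/\bar{w^*}\rvert=\lvert w-w^*\rvert/(\lvert w\rvert\,\lvert w^*\rvert)$ applied entrywise, the nonnegativity of $A_1$ and the bound $\lvert\widehat{A}_2\mathbf{e}\rvert\le\lvert\widehat{A}_2\rvert\,\lvert\mathbf{e}\rvert$ to obtain the entrywise comparison $\lvert\bm{\varepsilon}_1^{(k+2)}\rvert\le\gamma^2\lvert\widehat{A}_2\rvert A_1\lvert\bm{\varepsilon}_1^{(k)}\rvert$, and then the spectral-radius hypothesis to extract the root rate (the paper picks an operator norm with $\Vert\gamma^2\lvert\widehat{A}_2\rvert A_1\Vert_*<1$ where you invoke Gelfand's formula, an interchangeable step). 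Your explicit flagging that the fixed point must also satisfy the lower bound in \eqref{eq:assumption_convergence_1} is a fair point of care that the paper leaves implicit, but it does not change the argument.
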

\begin{proof}
Let $\varepsilon_s^{(k)}=\mathbf{h}_s^{(k)}-\mathbf{h}_s^{(*)}$, $s=1,2$. From \eqref{4a} and \eqref{5a}, we have
\begin{align*}
\bm{\varepsilon}_2^{(k+1)} &= \mathbf{h}_2^{(k+1)}-\mathbf{h}_2^{(*)} = \mathrm{diag}(\lvert A_1 \mathbf{h}_1^{(k)}\rvert)^{-2} A_1 \mathbf{h}_1^{(k)} - \mathrm{diag}(\lvert A_1 \mathbf{h}_1^{(*)}\rvert)^{-2} A_1 \mathbf{h}_1^{(*)}. 
\end{align*}
Since
\begin{align*}
     \mathbf{e}_i^{\top} \varepsilon_2^{(k+1)} &= \mathbf{e}_i^{\top}\mathrm{diag}(\lvert A_1 \mathbf{h}_1^{(k)}\rvert)^{-2} A_1 \mathbf{h}_1^{(k)} - \mathbf{e}_i^{\top} \mathrm{diag}(\lvert A_1 \mathbf{h}_1^{(*)} \rvert)^{-2} A_1 \mathbf{h}_1^{(*)} \\
     &= (\mathbf{e}_i^{\top}\lvert A_1 \mathbf{h}_1^{(k)} \rvert)^{-2} (\mathbf{e}_i^{\top}A_1 \mathbf{h}_1^{(k)}) - (\mathbf{e}_i^{\top} \lvert A_1 \mathbf{h}_1^{(*)} \rvert)^{-2}(\mathbf{e}_i^{\top} A_1 \mathbf{h}_1^{(*)}) \\
     &= \left( \mbox{conj}(\mathbf{e}_i^{\top}A_1 \mathbf{h}_1^{(k)})\right)^{-1} - \left(\mbox{conj}(\mathbf{e}_i^{\top} A_1 \mathbf{h}_1^{(*)}) \right)^{-1}\\
     &= \left( \mbox{conj}(\mathbf{e}_i^{\top}A_1 \mathbf{h}_1^{(k)} )\mbox{conj}(\mathbf{e}_i^{\top} A_1 \mathbf{h}_1^{(*)})\right)^{-1} \left( \mbox{conj}(\mathbf{e}_i^{\top} A_1 \mathbf{h}_1^{(*)}-\mathbf{e}_i^{\top}A_1 \mathbf{h}_1^{(k)})\right)\\
     &= \left( \mbox{conj}(\mathbf{e}_i^{\top}A_1 \mathbf{h}_1^{(k)} )\mbox{conj}(\mathbf{e}_i^{\top} A_1 \mathbf{h}_1^{(*)})\right)^{-1}\left( \mbox{conj}(\mathbf{e}_i^{\top} A_1 (\mathbf{h}_1^{(*)} -  \mathbf{h}_1^{(k)}))\right),
\end{align*}
we have that
\begin{align}
\mbox{conj}(\varepsilon_2^{(k+1)})  
&= - \mathrm{diag}\left( \frac{1}{({\mathbf{e}_i^{\top}} A_1 \mathbf{h}_1^{(k)}) (\mathbf{e}_i^{\top} A_1 \mathbf{h}_1^{(*)})}\right) A_1 \varepsilon_1^{(k)}. \label{9}
\end{align}
Similarly,
\begin{align}
& \varepsilon_1^{(k+2)} \nonumber \\ 
=&\ - \mathrm{diag}\left( \frac{1}{\mbox{conj}(({\mathbf{e}_i^{\top}} \widehat{A}_2 \mathbf{h}_2^{(k+1)}) (\mathbf{e}_i^{\top} \widehat{A}_2 \mathbf{h}_2^{(*)}))}\right) \widehat{A}_2 \mbox{conj} \left( \varepsilon_2^{(k+1)} \right)  \nonumber \\
=&  \mathrm{diag}\left( \frac{1}{\mbox{conj}(({\mathbf{e}_i^{\top}} \widehat{A}_2 \mathbf{h}_2^{(k+1)}) (\mathbf{e}_i^{\top} \widehat{A}_2 \mathbf{h}_2^{(*)}))}\right) \widehat{A}_2 \mathrm{diag} \left( \frac{1}{({\mathbf{e}_i^{\top}} A_1 \mathbf{h}_1^{(k)}) (\mathbf{e}_i^{\top} A_1 \mathbf{h}_1^{(*)})}\right) A_1 \varepsilon_1^{(k)}. \label{eq:iter_eps_1}
\end{align}
Since $A_1=-P_1L_1^{-1}B_1\geqslant 0$, taking the componentwise absolute values of the vectors in \eqref{eq:iter_eps_1}, it holds from the assumption that 
\begin{equation} \label{12}
\lvert\bm{\varepsilon}_1^{(k+2)}\rvert \leqslant \gamma^2 \lvert \widehat{A}_2\rvert A_1 \lvert\bm{\varepsilon}_1^{(k)}\rvert \leqslant (\gamma^2 \lvert \widehat{A}_2\rvert A_1)^{k/2} \lvert\bm{\varepsilon}_1^{(0)}\rvert.
\end{equation}
From the assumption of \eqref{eq:assumption_convergence_2}, there is an operator norm $\| \cdot \|_*$ such that 
\begin{align}
    \| \gamma^2 \lvert \widehat{A}_2 \rvert A_1 \|_{*} < 1. \label{eq:pf_R-linear_5}
\end{align}
From \eqref{12}, it follows that
\begin{align*}
\|\bm{\varepsilon}_1^{(k+2)}\|_\infty 
&\leqslant \|(\gamma^2 \lvert \widehat{A}_2\rvert A_1)^{k/2}\|_\infty \|\bm{\varepsilon}_1^{(0)}\|_\infty
\leqslant M_{\infty} \|(\gamma^2 \lvert \widehat{A}_2\rvert A_1)^{k/2}\|_{*} \|\bm{\varepsilon}_1^{(0)}\|_\infty 
\end{align*}
for some constant $M_{\infty}>0$. Then, by \eqref{eq:pf_R-linear_5}, we have a $k^*\in\mathbb{N}$ and $0<r^*<1$ such that 
$$
\|\bm{\varepsilon}_1^{(k+2)}\|_\infty^{1/k} \leqslant M_{\infty}^{1/k} \|\gamma \lvert \widehat{A}_2\rvert A_1\|_*^{1/2} \leqslant r^* < 1,   ~\text{ $\forall k\geq k^*$.} 
$$
Similarly, we have
$$
\|\bm{\varepsilon}_2^{(k+3)}\|_\infty^{1/k} \leqslant M_{\infty}^{1/k} \|\gamma^2 A_1 \lvert \widehat{A}_2\rvert\|_*^{1/2} \leqslant r^* <1,   ~\text{ $\forall k\geqslant k^*$.} 
$$
\end{proof}

\begin{algorithm}
\caption{MDEM with nonequivalent deflation for spherical conformal parameterizations}
\label{alg:CEM_defl}
\begin{algorithmic}[1]
\Require A closed triangular mesh $\mathcal{S}$ of genus zero, a radius $\rho \approx 1.4$ and $n=\#\mathcal{V}(\mathcal{S})$. 
\Ensure A spherical conformal parameterization $\mathbf{f}$.
\State Compute an initial spherical parameterization $\mathbf{f}$ using \cite{AnHT99}. 
\State Compute the stereographic projection ${h}_i=\frac{{f}_{i,1}}{1- {f}_{i,3}}+\mathrm{i}\frac{{f}_{i,2}}{1-{f}_{i,3}}$, $i=1, \dots, n$. 
\State \% Use the following steps of the southern and northern hemisphere iterations to define the matrices $L_s, B_s, P_s$ in \eqref{2a} and \eqref{2b}, respectively.
\For{$s=1,2$}  
\State Take the inversion $\mathbf{h}\gets\mathrm{diag}(\lvert\mathbf{h}\rvert)^{-2}\mathbf{h}$.  
\State Take the index sets $\mathtt{I}_s$, $\mathtt{B}_s^{\prime}$ and $\mathtt{B}_s$ in \eqref{eq:idx_sets_I_Bprime_B}, and set $m_s = \#\mathtt{B}_s$.  
\State Set up $L_s$ and $B_s$ in \eqref{2a}.
\State Solve the linear system
$L_s\mathbf{h}_{\mathtt{I}_s} = -B_s  \mathbf{h}_{\mathtt{B}_s}$.  
\EndFor 
\State Set $P_1 = [I_n]_{\mathtt{B}_{2},\mathtt{I}_1}$, $P_2 = [I_n]_{\mathtt{B}_{1},\mathtt{I}_2}$.
\State Compute the left eigenvector $[\mathbf{q}_1^{\top}, \mathbf{q}_2^{\top}]^{\top}$ in Theorem~\ref{thm:noonequivalent_deflation}.
\Repeat 
\State Compute the scaling $c_k$ in \eqref{eq:scaling} and $\mathbf{h}_{\mathtt{B}_2}$ in \eqref{eq:scaling_iteration_s}. 
\State Compute the scaling $c_{k+1}$ in \eqref{eq:scaling} and $\mathbf{h}_{\mathtt{B}_1}$ in \eqref{eq:scaling_iteration_n}. 
\Until{$\mathbf{h}_{\mathtt{B}_s}$, $s=1,2$, converge}  
\State \% Reconstruct $\mathbf{h}$ by the following steps for the southern- and northern-hemisphere iterations:
\State Solve $L_1\mathbf{h}_{\mathtt{I}_1} = -B_1  \mathbf{h}_{\mathtt{B}_1}$, and take the inversion $\mathbf{h}\gets\mathrm{diag}(\lvert\mathbf{h}\rvert)^{-2}\mathbf{h}$.
\State Solve $L_2\mathbf{h}_{\mathtt{I}_2} = -B_2  \mathbf{h}_{\mathtt{B}_2}$.
\State Normalize $\mathbf{h} \gets \mathbf{h} / \mathrm{median}(\lvert \mathbf{h}\rvert)$. 
\State Construct the spherical conformal parameterization $\mathbf{f}$ by an inverse stereographic projection of $\mathbf{h}$: $f_{\ell} = \Pi_{\mathbb{S}^2}^{-1}(h_{\ell})$, $\ell=1, \dots, n$.
\end{algorithmic}
\end{algorithm}

\section{Numerical Experiments} \label{sec:4}

\begin{figure}
\center
\begin{subfigure}[b]{0.28\textwidth}
\center
    \includegraphics[width=\textwidth]{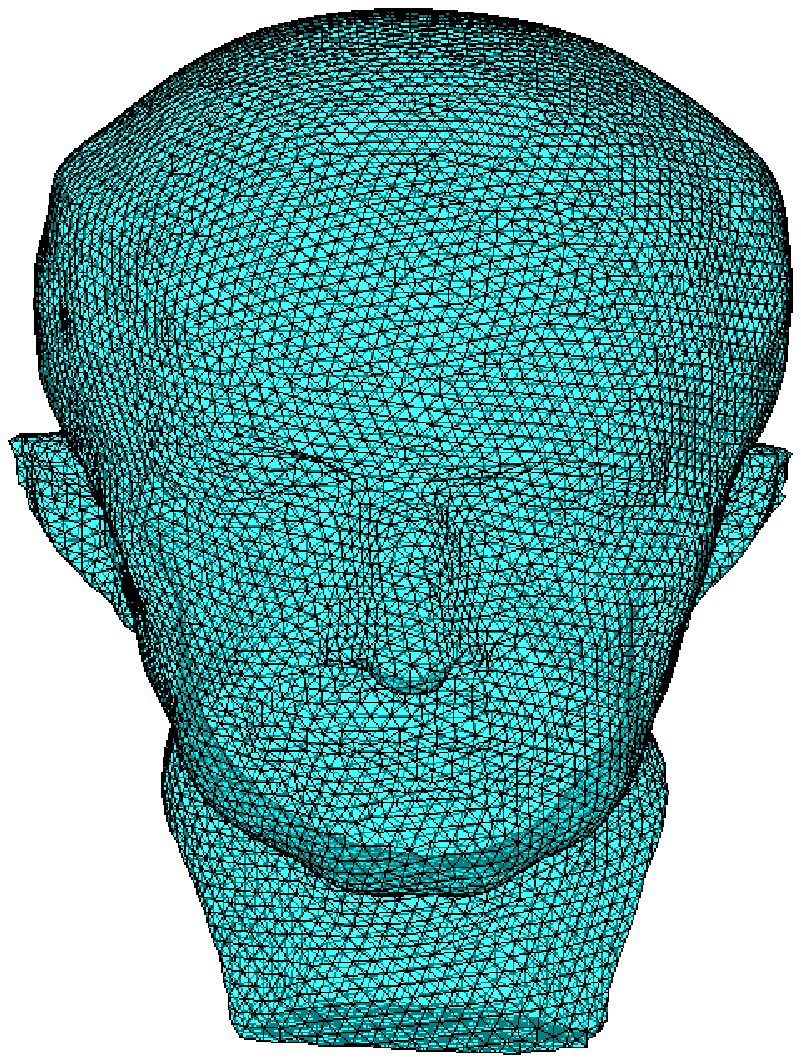}
\caption{Max Planck}
    \label{fig:MaxPlanck_mesh}
\end{subfigure}
\begin{subfigure}[b]{0.23\textwidth}
\center
    \includegraphics[width=\textwidth]{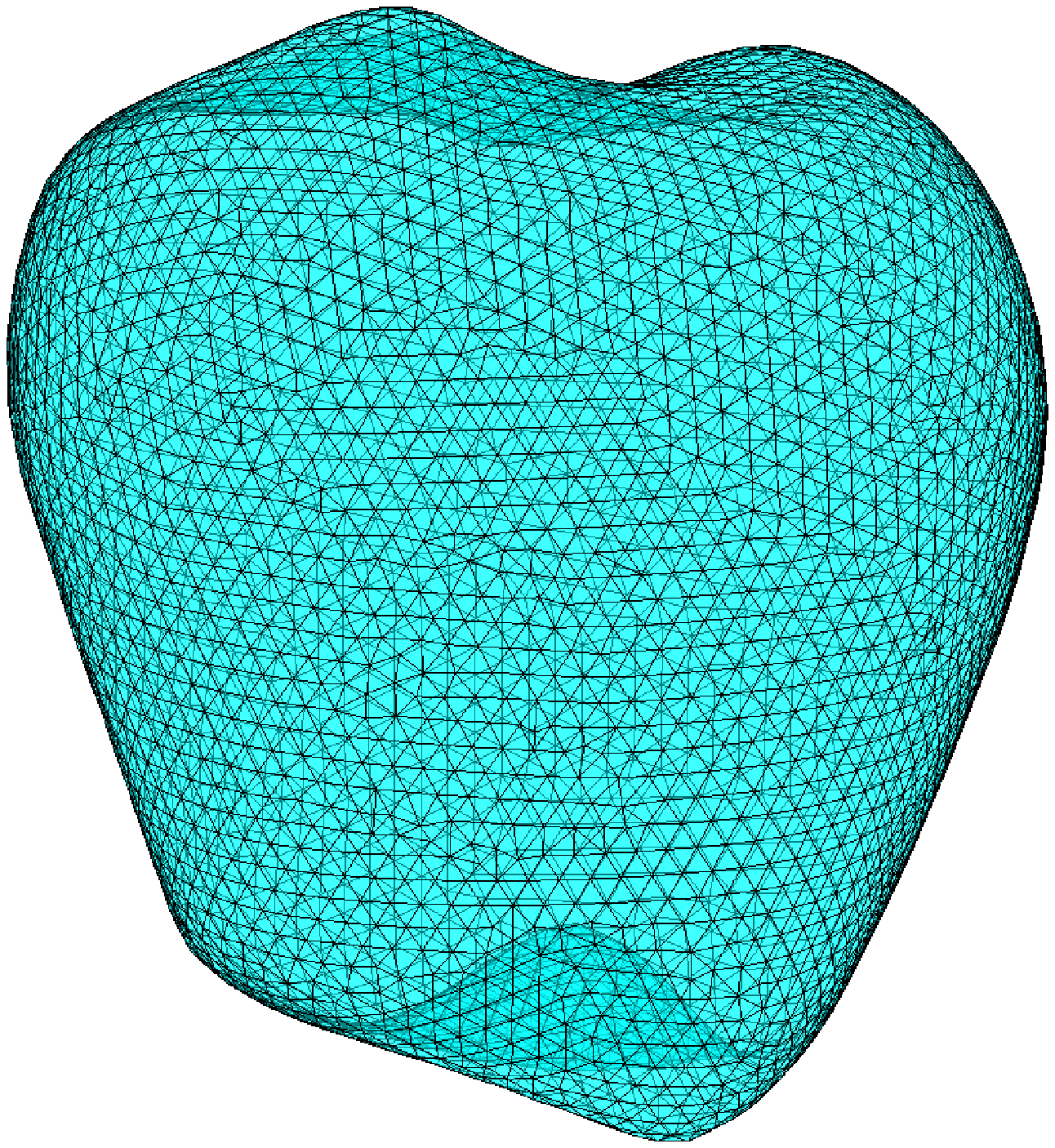}
\caption{Apple}
    \label{fig:Apple_mesh}
\end{subfigure}
\begin{subfigure}[b]{0.23\textwidth}
\center
    \includegraphics[width=\textwidth]{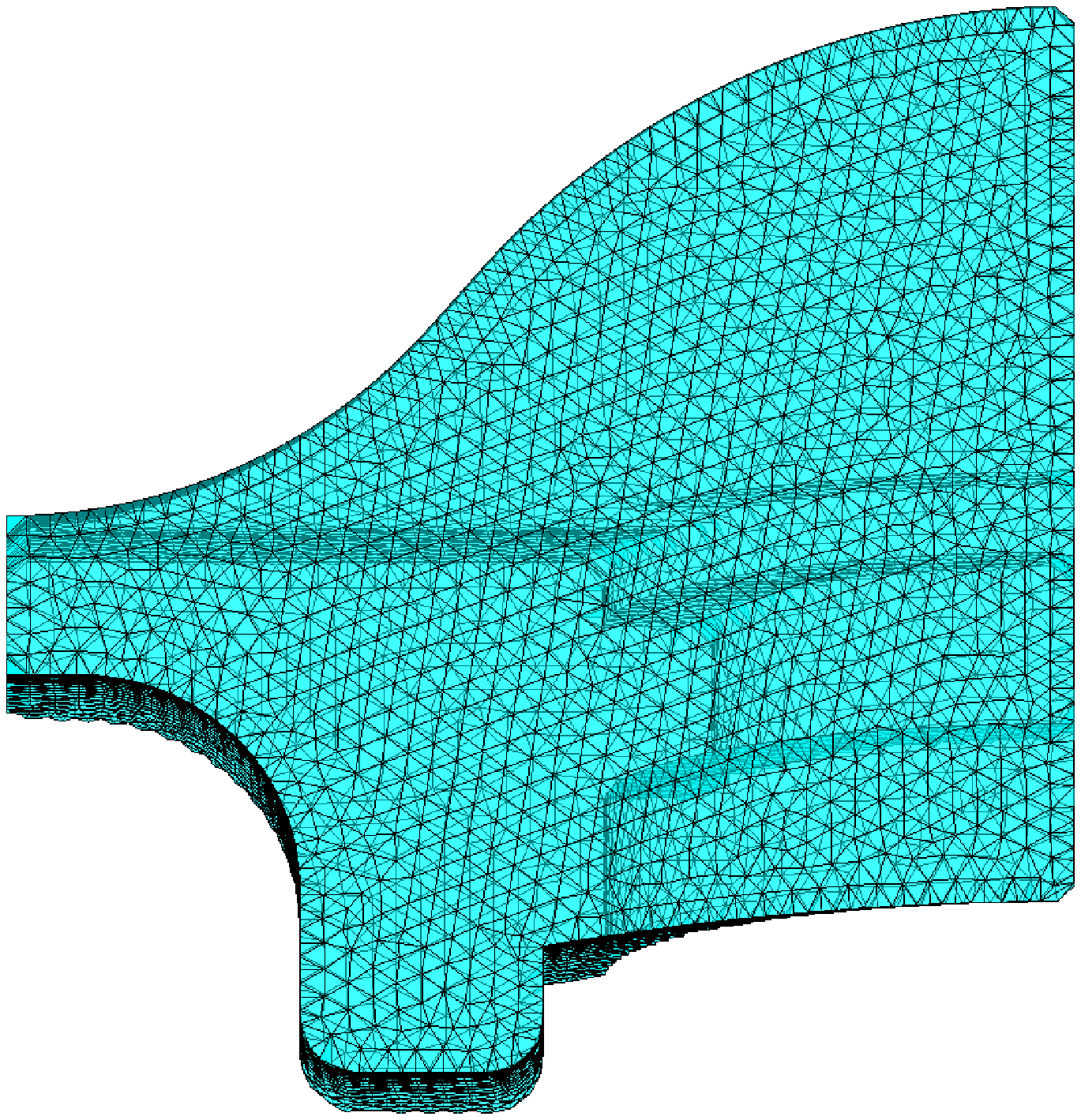}
\caption{Fandisk}
    \label{fig:FanDisk_mesh}
\end{subfigure}
\begin{subfigure}[b]{0.23\textwidth}
\center
    \includegraphics[width=\textwidth]{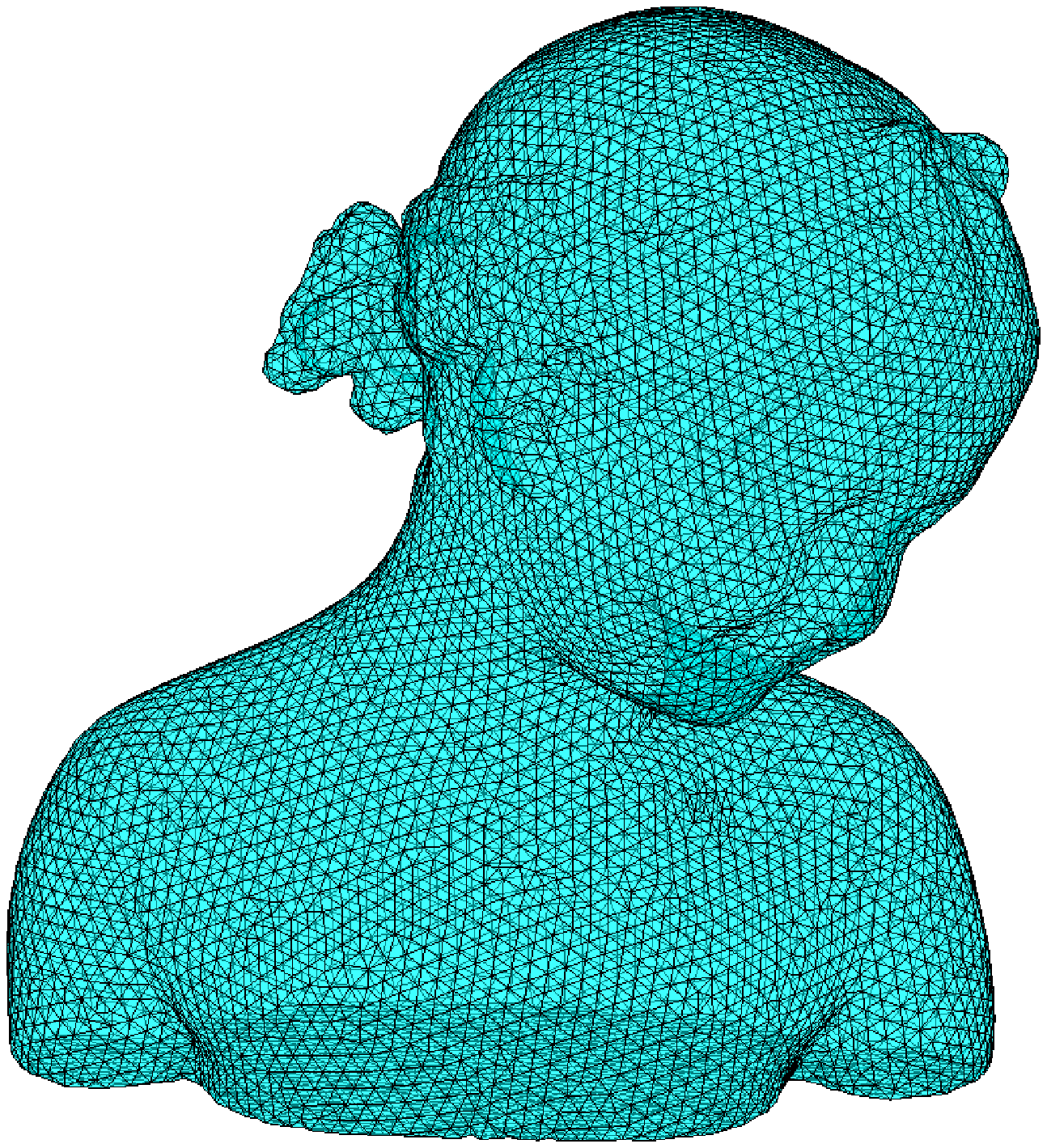}
\caption{Bimba}
    \label{fig:FanDisk_mesh}
\end{subfigure}
\begin{subfigure}[b]{0.23\textwidth}
\center
    \includegraphics[width=\textwidth]{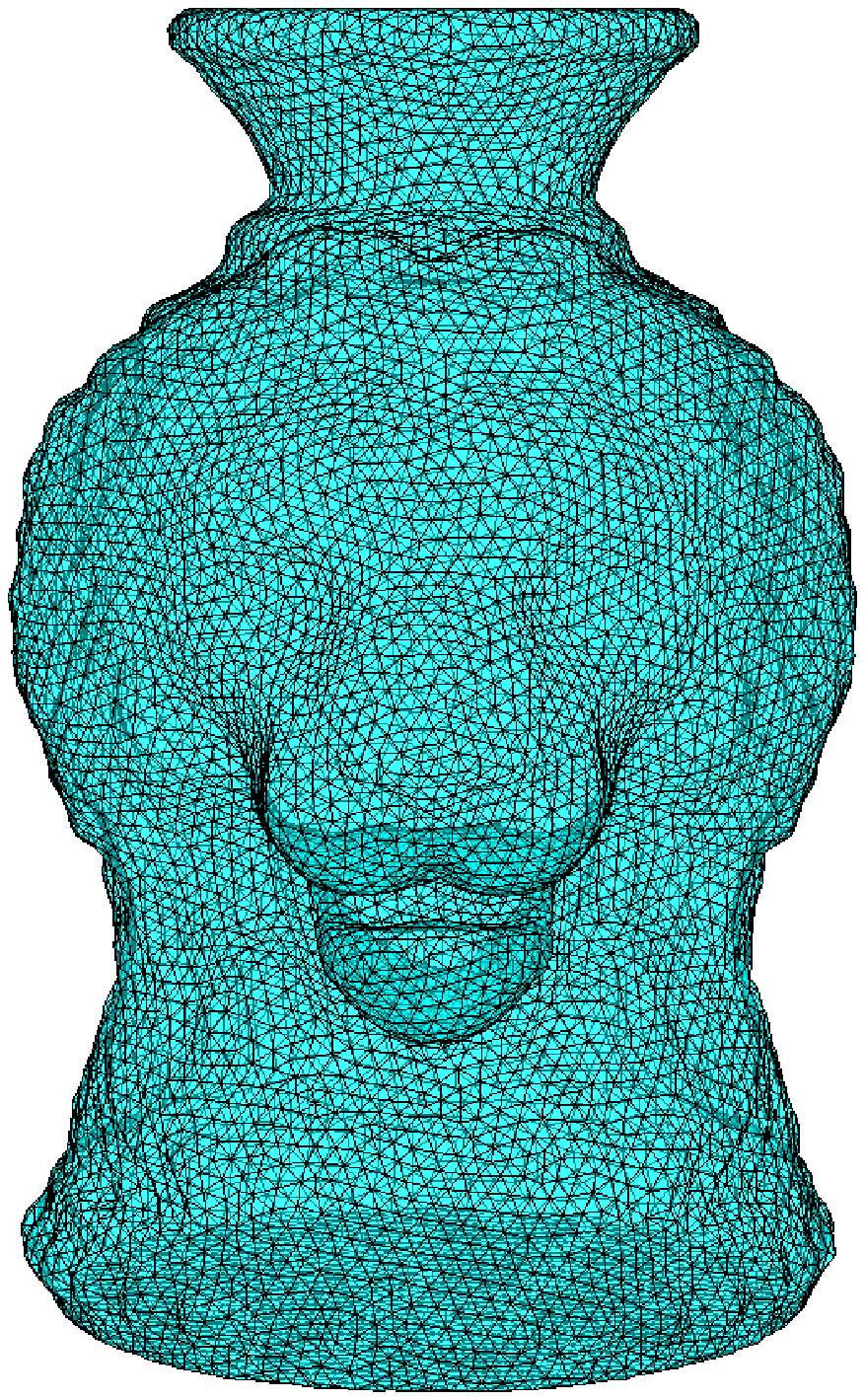}
\caption{Lion}
    \label{fig:Lion_mesh}
\end{subfigure}
\begin{subfigure}[b]{0.26\textwidth}
\center
    \includegraphics[width=\textwidth]{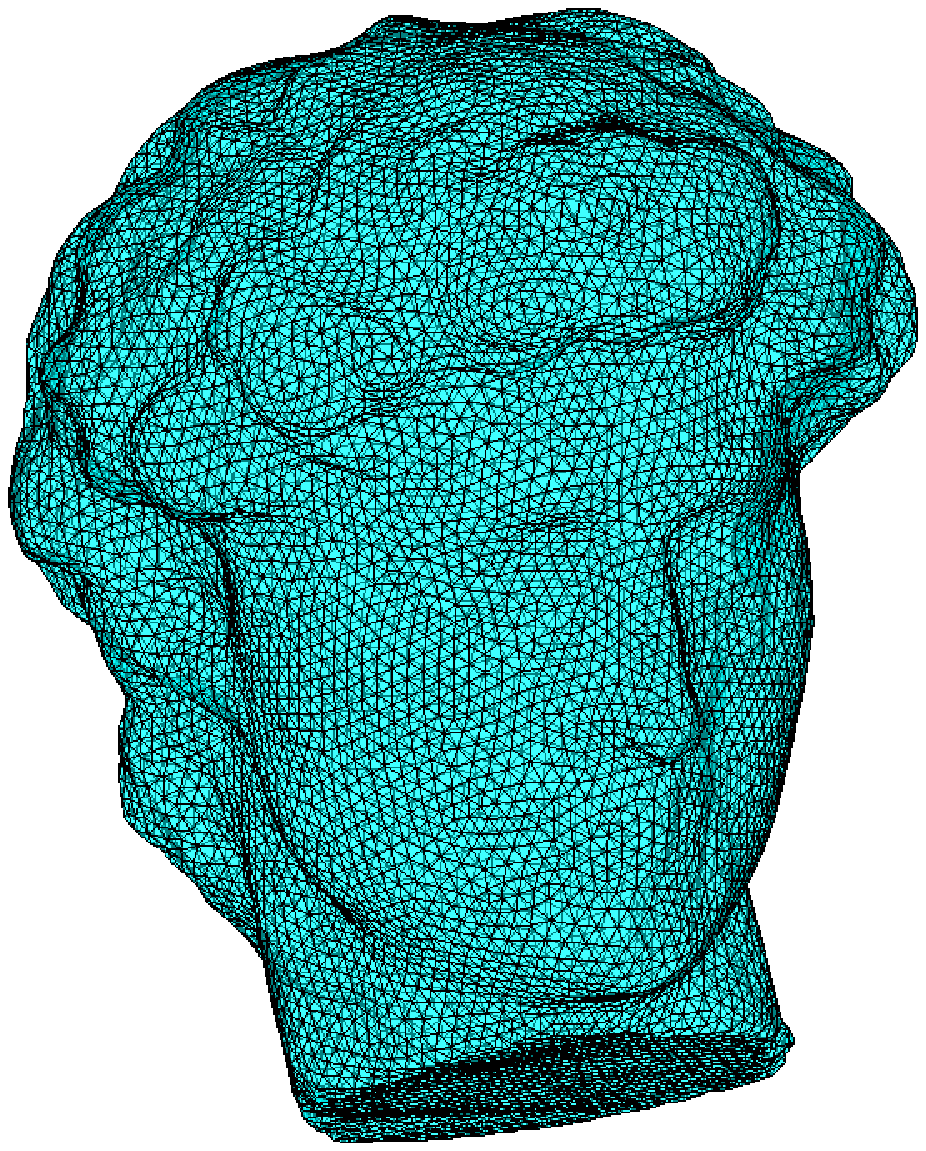}
\caption{David Head}
    \label{fig:Lion_mesh}
\end{subfigure}
\begin{subfigure}[b]{0.23\textwidth}
\center
    \includegraphics[width=\textwidth]{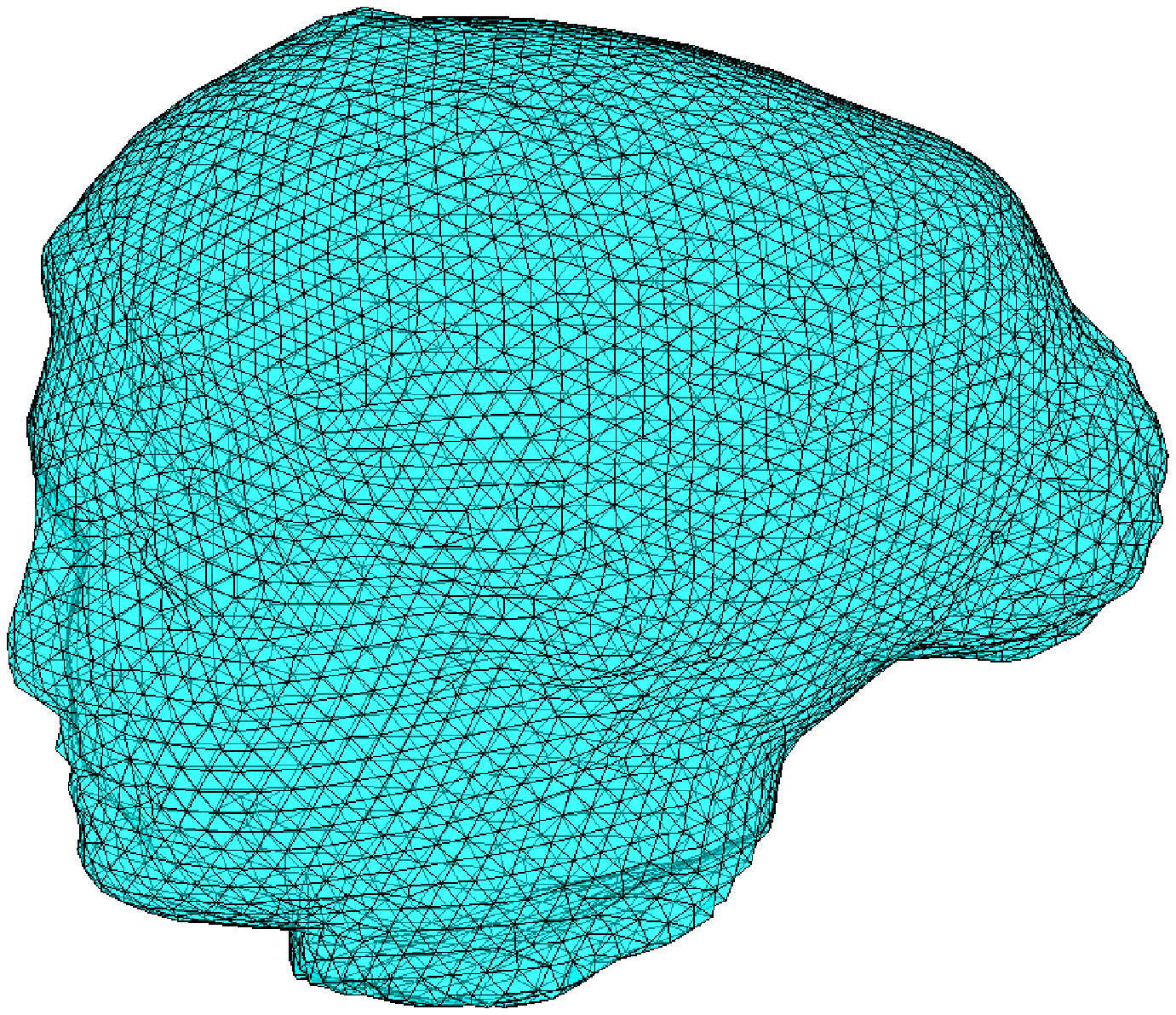}
\caption{Venus}
    \label{fig:Lion_mesh}
\end{subfigure}
\begin{subfigure}[b]{0.22\textwidth}
\center
    \includegraphics[width=\textwidth]{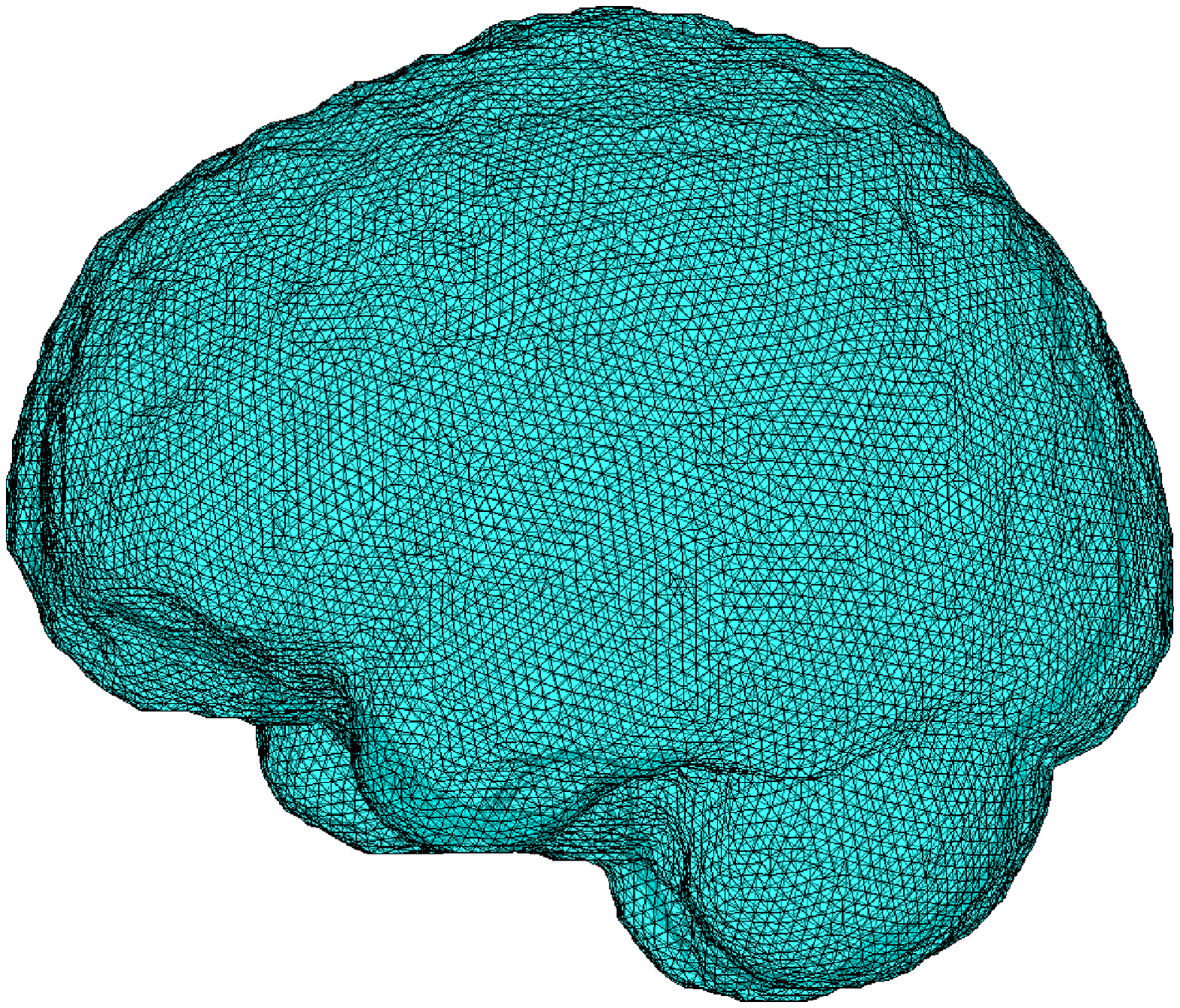}
\caption{Brain}
    \label{fig:Lion_mesh}
\end{subfigure}
\caption{Benchmark mesh models.}
\label{fig:benchmark_mesh}
\end{figure}

In this section, we confirm that assumption \eqref{eq:assumption_convergence_2} of Theorem~\ref{thm:convergence} holds, and we show the convergence of Algorithm~\ref{alg:CEM_defl} for various genus-zero closed surface models, as shown in Figure~\ref{fig:benchmark_mesh} from GitHub~\cite{GitHub}, Gu's website~\cite{GuWeb}, and the BraTS 2021 databases~\cite{BaAS17,BaGB21}. 
In particular, the BraTS 2021 database establishes a challenging platform for brain tumor segmentation by AI techniques and provides 1251 MRI brain images for training. Finally, we give the conformality and the Dirichlet energy computed by the MDEM Algorithm~\ref{alg:CEM_defl}, which are similar to those computed by the DEM Algorithm~\ref{alg1}. 
However, Algorithm \ref{alg:CEM_defl} has a strict mathematical proof of convergence in Theorem \ref{thm:convergence}, while Algorithm \ref{alg1} lacks a proof. 

\begin{table}
\center
\begin{tabular}{l|c|c|c|cccc}
\hline
benchmark & Max Planck & Apple & Fandisk & Bimba 
\\ \hline
$n$ & 34,228 & 40,777 & 40,406 & 34,282 
\\ 
$m_1$ & 325 & 352 & 339 & 204 \\
$m_2$ & 317 & 360 & 358 & 222 \\
$\rho(\gamma^2 \lvert \widehat{A}_2 \rvert A_1)$  &  0.776 & 0.739 & 0.859 & 0.876 \\
\hline \hline 
benchmark & Lion & David Head & Venus & Brain \\ \hline
$n$ &  40,792 & 38,484 & 34,807 & 10,107 \\ 
$m_1$ & 372 & 334 & 330 & 169 \\
$m_2$ & 281 & 316 & 340 & 185 \\
$\rho(\gamma^2 \lvert \widehat{A}_2 \rvert A_1)$ & 0.815 & 0.776 & 0.786 & 0.822 \\
\hline
\end{tabular}
\caption{Numbers of vertices for the triangular meshes in the benchmark models and the associated $\rho(\gamma^2 \lvert \widehat{A}_2 \rvert A_1)$.}
\label{tab:spectral_radius}
\end{table}

From \eqref{eq:assumption_convergence_1}, we define $\eta = \max_{k}\{ \max \{ \eta_1^{(k)}, \eta_2^{(k)}\} \}$ with
\begin{align*}  
      \eta_1^{(k)} = 1 - \min_{i = 1, \dots, m_1} \{\lvert A_1\mathbf{h}_1^{(k)}\rvert_i \}, \quad \eta_2^{(k)} = 1 - \min_{i = 1, \dots, m_2} \{\lvert\widehat{A}_2\mathbf{h}_2^{(k)}\rvert_i \}.
\end{align*}
Then,
$1-\eta\leqslant\lvert A_1\mathbf{h}_1^{(k)}\rvert_i\leqslant 1$ and $1-\eta\leq \lvert\widehat{A}_2\mathbf{h}_2^{(k)}\rvert_i\leq 1$ for all $i$ and $k$. Taking $\gamma = 1/(1 - \eta)^2$, we show in Table~\ref{tab:spectral_radius} that the spectral radii $\rho(\gamma^2 \lvert \widehat{A}_2 \rvert A_1)$ are less than one for all the benchmark models. 
In Figure~\ref{fig:hist_spectral_radius}, we show the histograms of $\rho(\gamma^2 \lvert \widehat{A}_2 \rvert A_1)$ for 1251 brain images from the BraTS 2021 Challenge database, which are all less than 1. All the results in Table~\ref{tab:spectral_radius} and Figure~\ref{fig:hist_spectral_radius} show that the assumptions in \eqref{eq:assumption_convergence} hold. That is, the convergence of the MDEM algorithm with nonequivalence deflation can be theoretically guaranteed, as shown in Theorem~\ref{thm:convergence}.

\begin{figure}
\center
\begin{subfigure}[b]{0.48\textwidth} 
\center
    \includegraphics[width=\textwidth]{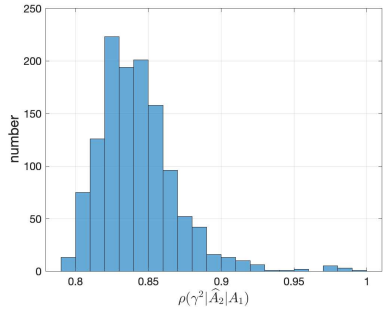}
\end{subfigure}
\caption{Histograms of $\rho(\gamma^2 \lvert \widehat{A}_2 \rvert A_1)$ in Theorem~\ref{thm:convergence} for 1251 brain images.}
\label{fig:hist_spectral_radius}
\end{figure}

In Figures~\ref{fig:MaxPlanck_DavidHead_cong_beh} and \ref{fig:Lion_Brain_conv_beh}, we plot the results of $\| \mathbf{h}_1^{(k+1)} - \mathbf{h}_1^{(k)} \|_2$ for each iteration $k$ to illustrate the convergence behavior of four benchmark models. The results show the typical convergence behavior of Algorithm~\ref{alg:CEM_defl}. 
To validate the asymptotical R-linear convergence of Theorem~\ref{thm:convergence}, we compute 100 iterations of Algorithm~\ref{alg:CEM_defl} for 6 benchmark models and choose $\mathbf{h}_1^{(100)}$ as the converged solution $\mathbf{h}_1^{(\ast)}$. The results $\| \mathbf{h}_1^{(k)}-\mathbf{h}_1^{(*)}\|_\infty^{1/k} := \| \mathbf{h}_1^{(k)}-\mathbf{h}_1^{(100)}\|_\infty^{1/k}$ for each iteration $k$ are plotted in Figure~\ref{fig:R_linear_conv_beh}, which shows that $\| \mathbf{h}_1^{(k)}-\mathbf{h}_1^{(100)}\|_\infty^{1/k} < 1$ for all $k$. 
Now, we give a further discussion of the asymptotically R-linear convergence by increasing the total iteration number $m$ of Algorithm~\ref{alg:CEM_defl}. Since Figure~\ref{fig:conv_beha} shows that all the benchmark models have similar convergence behavior, we take only the model ``Apple'' to demonstrate the convergence $\| \mathbf{h}_1^{(k)}-\mathbf{h}_1^{(*)}\|_\infty^{1/k} := \| \mathbf{h}_1^{(k)}-\mathbf{h}_1^{(m)}\|_\infty^{1/k}$ for $m= 500, 1000, 2000$. 
As shown in Figure~\ref{fig:R_linear_conv_beh}, the value of $\| \mathbf{h}_1^{(k)}-\mathbf{h}_1^{(m)}\|_\infty^{1/k}$ increases as $k$ increases in the first 80 iterations. 
To show the asymptotically R-linear convergence, i.e., $\| \mathbf{h}_s^{(k)}-\mathbf{h}_s^{(*)}\|_\infty^{1/k} \leqslant r_s < 1$ as $k$ becomes sufficiently large, we display the last 200 values of $\| \mathbf{h}_s^{(k)}-\mathbf{h}_s^{(m)}\|_\infty^{1/k}$ for each $m$ in Figure~\ref{fig:conv_beha_Apple_zoomin}. 
The results show that $\| \mathbf{h}_s^{(k)}-\mathbf{h}_s^{(m)}\|_\infty^{1/k}$ increases to nearly 1 and then decreases linearly as $k$ increases. All the values are less than 1, so the convergence of Algorithm~\ref{alg:CEM_defl} is asymptotically R-linear.

\begin{figure}
\center
\begin{subfigure}[b]{0.32\textwidth}
\center
    \includegraphics[width=\textwidth]{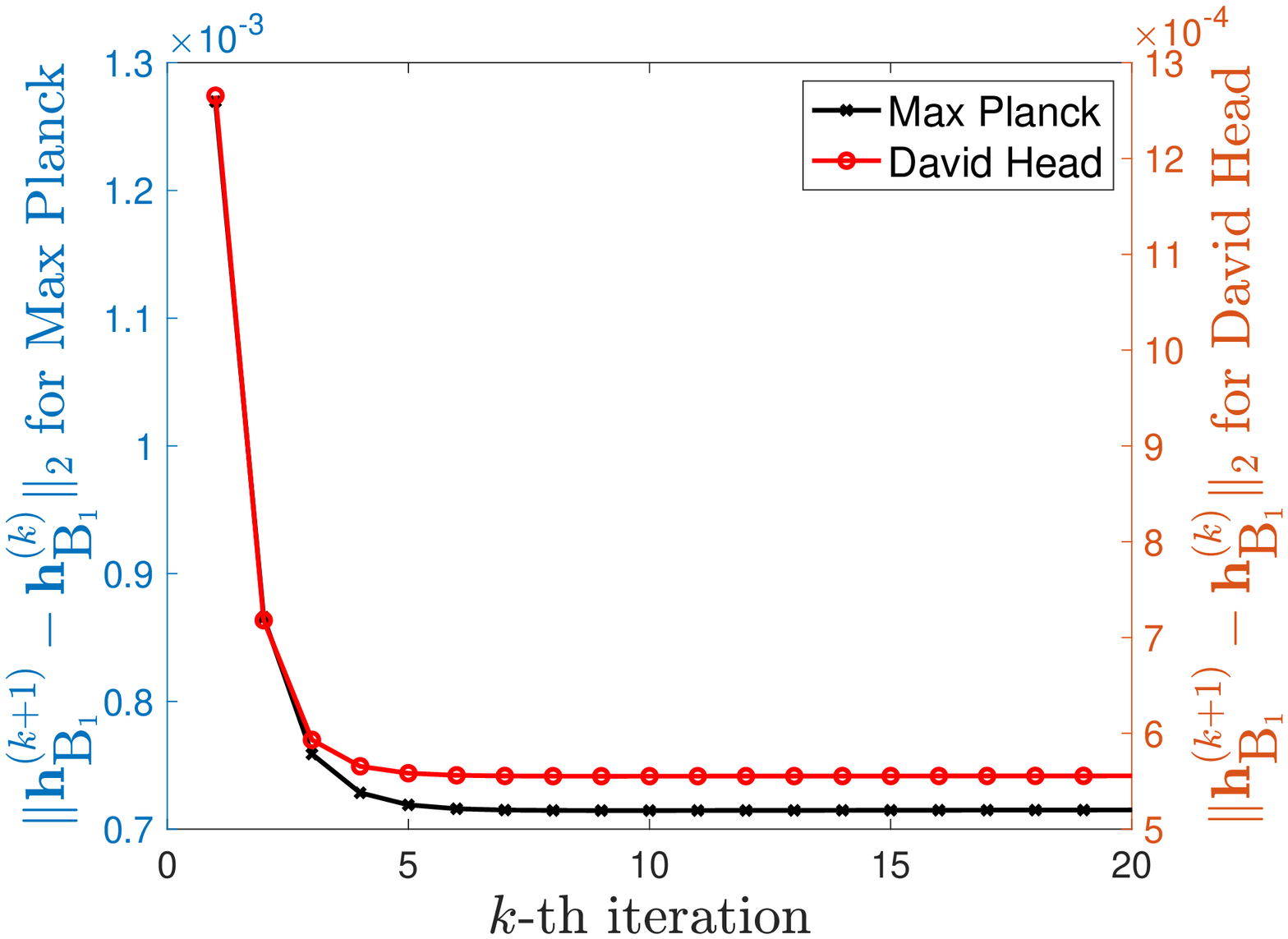}
\caption{}
    \label{fig:MaxPlanck_DavidHead_cong_beh}
\end{subfigure}
\begin{subfigure}[b]{0.32\textwidth}
\center
    \includegraphics[width=\textwidth]{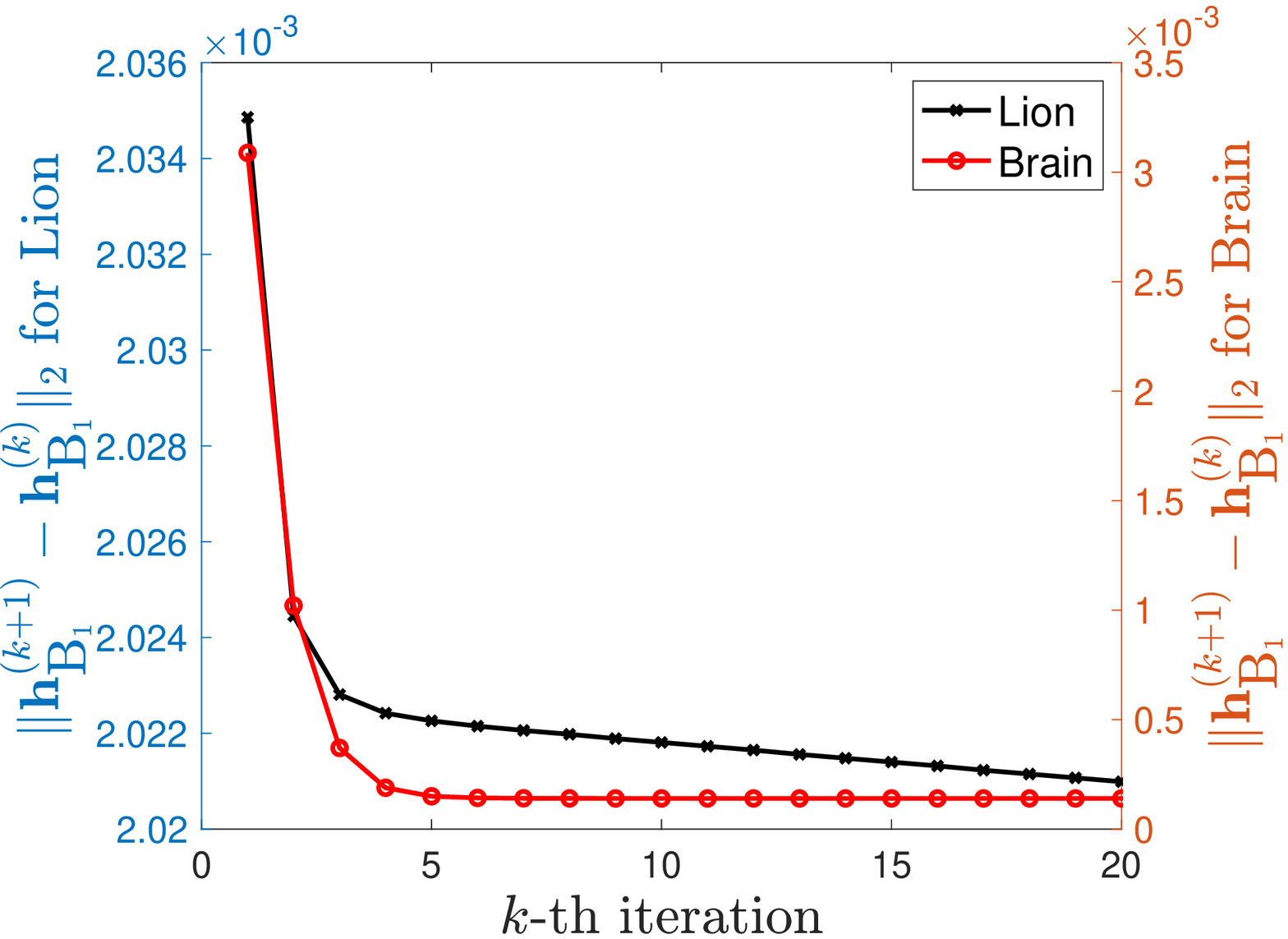}
\caption{}
    \label{fig:Lion_Brain_conv_beh}
\end{subfigure}
\begin{subfigure}[b]{0.32\textwidth}
\center
    \includegraphics[width=\textwidth]{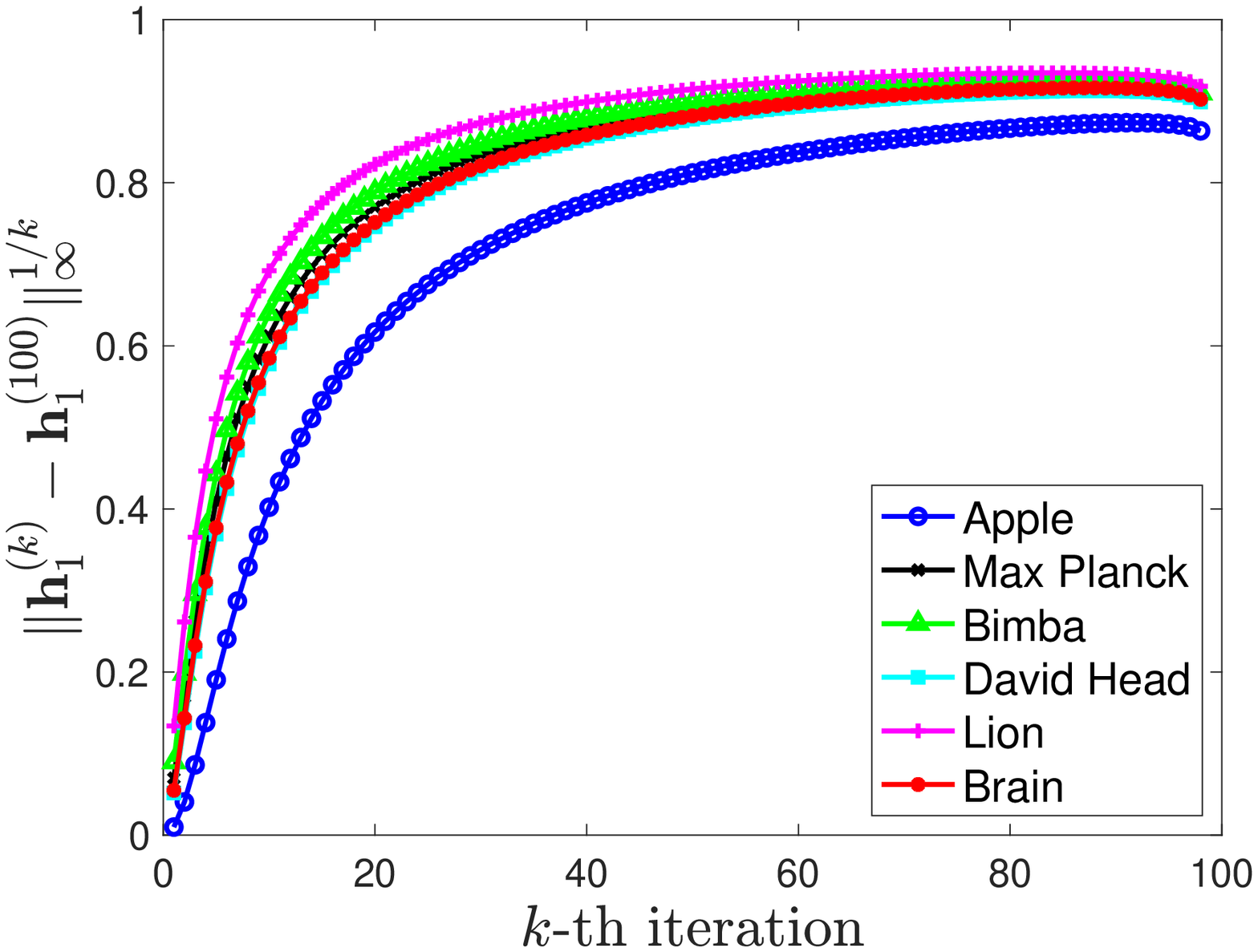}
\caption{}
    \label{fig:R_linear_conv_beh}
\end{subfigure}
\caption{Convergence behavior of the benchmarks (a) ``Max Planck'' and ``David Head'', (b) ``Lion'' and ``Brain'', and (c) R-linear convergence by using Algorithm~\ref{alg:CEM_defl}. }
\label{fig:conv_beha}
\end{figure}

\begin{figure}
\center
\begin{subfigure}[b]{0.48\textwidth}
\center
    \includegraphics[width=\textwidth]{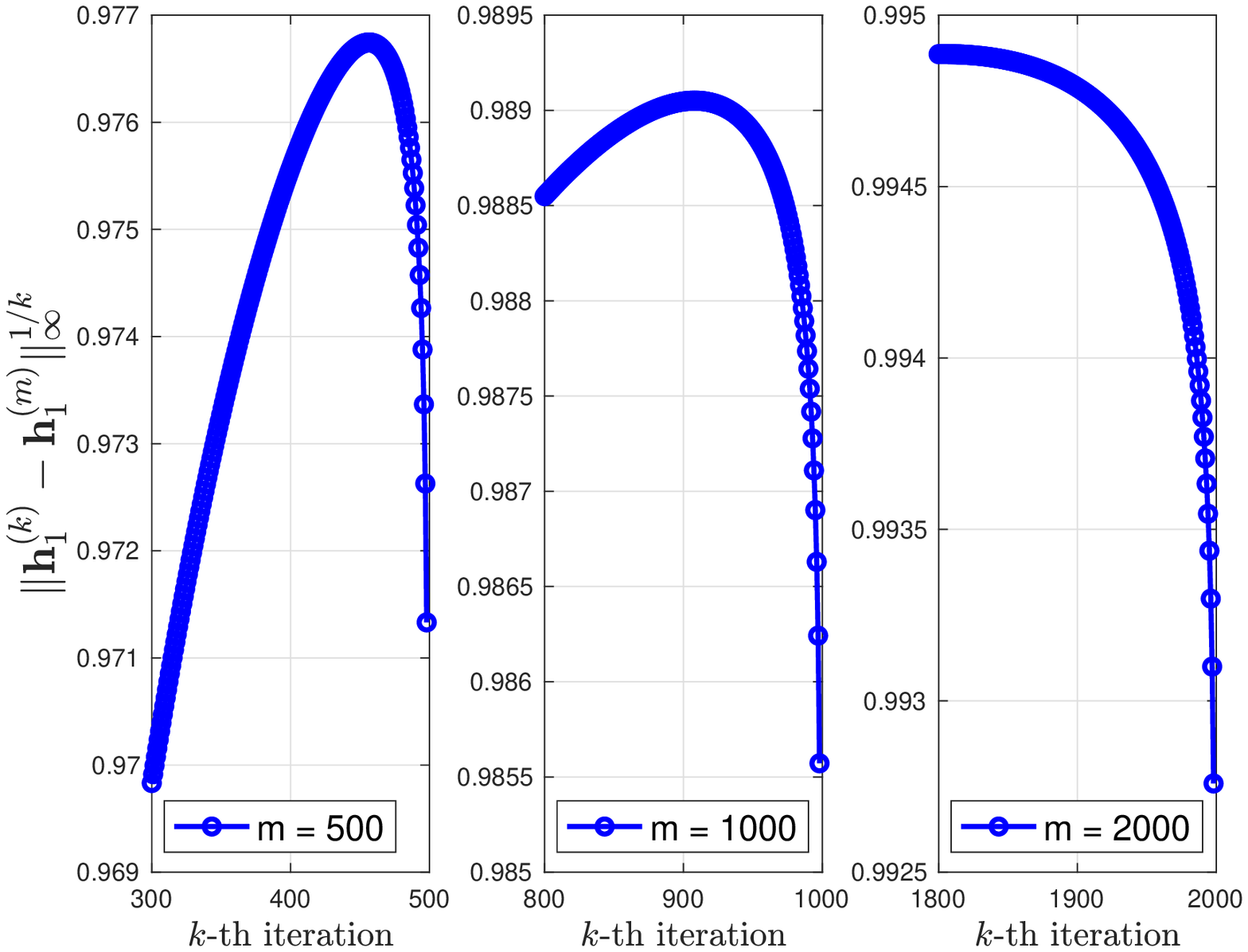}
\caption{$\| \mathbf{h}_1^{(k)}-\mathbf{h}_1^{(m)}\|_\infty^{1/k}$}
    \label{fig:R_linear_conv_Apple_zoomin_h1}
\end{subfigure}
\begin{subfigure}[b]{0.48\textwidth}
\center
    \includegraphics[width=\textwidth]{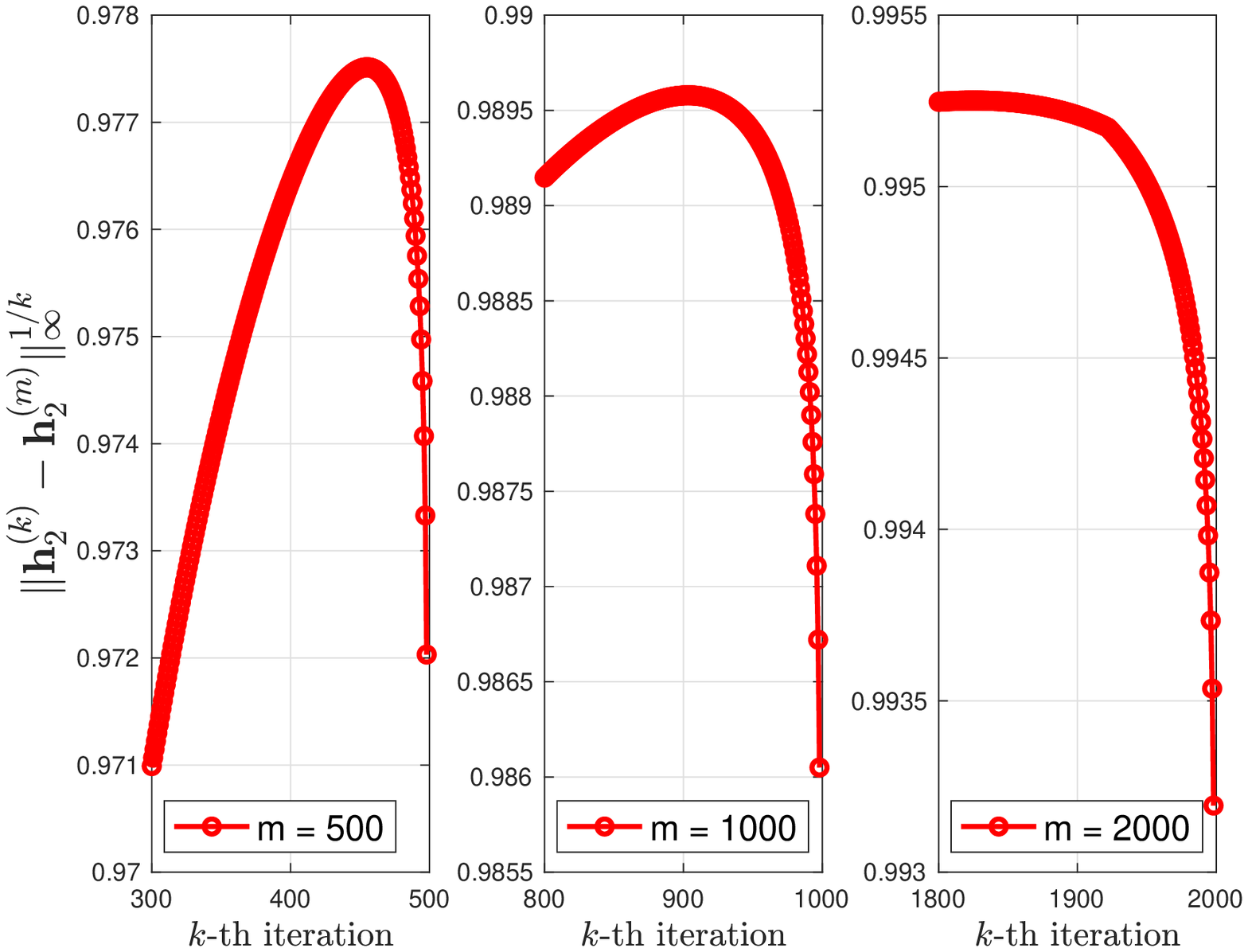}
\caption{$\| \mathbf{h}_2^{(k)}-\mathbf{h}_2^{(m)}\|_\infty^{1/k}$}
    \label{fig:R_linear_conv_Apple_zoomin_h2}
\end{subfigure}
\caption{Enlarged results of $\| \mathbf{h}_s^{(k)}-\mathbf{h}_s^{(m)}\|_\infty^{1/k}$ provided by Algorithm~\ref{alg:CEM_defl} for $m = 500, 1000, 2000$.}
\label{fig:conv_beha_Apple_zoomin}
\end{figure}

Finally, we present the conformality and the Dirichlet energy computed by Algorithms~\ref{alg1} and \ref{alg:CEM_defl}. To show the conformality, we define the relative local angle distortion as $d_{\theta} = \lvert \theta_{\mathcal{S}} - \theta_{\mathbb{S}} \rvert / \theta_{\mathcal{S}}$, where $\theta_{\mathcal{S}}$ and $\theta_{\mathbb{S}}$ are the angles of the triangle meshes in $\mathcal{S}$ and $\mathbb{S}^2$, respectively. In Table~\ref{tab:E_d_d_theta_benchmark}, we see that $E_D(f)$ and the mean and standard deviation (SD) of $\{d_{\theta}\}$ for each benchmark model computed by Algorithm~\ref{alg:CEM_defl} are slightly better than those computed by Algorithm~\ref{alg1}. The spherical conformal parameterization by Algorithm~\ref{alg:CEM_defl} preserves conformality satisfactorily.

Furthermore, in Figures~\ref{fig:mean_d_theta_deflate_1p4} and \ref{fig:STD_defl_1p18}, we plot the histograms of the mean and SD of local angle distortions $\{d_{\theta}\}$ for 1251 brain images by Algorithm~\ref{alg:CEM_defl}. In Figure~\ref{fig:diff_Eng_defl_1p4}, we show the histogram of the difference in the Dirichlet energy $d_{E} = E_{D_2}(f) - E_{D_1}(f)$, where $E_{D_1}(f)$ and $E_{D_2}(f)$ are the Dirichlet energies computed by Algorithms~\ref{alg1} and \ref{alg:CEM_defl}, respectively. Most results in Figure~\ref{fig:diff_Eng_defl_1p4} show that the Dirichlet energy obtained by Algorithm~\ref{alg:CEM_defl} is slightly better than that obtained by Algorithm~\ref{alg1}.

\section{Conclusions} \label{sec:5}

In this paper, we first propose a theoretical foundation of the DEM on $\overline{\mathbb{C}}$ for spherical conformal minimization. By using the calculus of variations, we derive the associated Euler--Lagrange equations, which can be cleverly split into two parts, the homothetic and minimal transformations. The minimal transformation part, which reduces the Dirichlet energy, is meaningful for its contribution to improving conformality, while the homothetic transformation part has no contribution to conformality under the relevant equivalence relation.

On this theoretical basis, we develop an efficient and reliable MDEM algorithm with nonequivalence deflation for the computation of spherical conformal parameterizations from a closed triangular mesh of genus zero to $\mathbb{S}^2$. Furthermore, we prove that the MDEM algorithm has asymptotically R-linear convergence under some mild conditions. 
The numerical results on various models from different benchmarks show that MDEM preserves conformality satisfactorily and is efficient in terms of computational cost, which supports our developed MDEM algorithm.

\begin{table}
\center
\begin{tabular}[t]{l|ccc|ccc}
\hline
  & \multicolumn{3}{c|}{Algorithm~\ref{alg1}} & \multicolumn{3}{c}{Algorithm~\ref{alg:CEM_defl}} \\ \hline 
benchmark   & $E_D(f)$ &  Mean & SD & $E_D(f)$ &  Mean & SD \\
\hline     
Max Planck  & 0.00196 & 0.007 & 0.014 & 0.00196 & 0.007 & 0.014   \\ 
Apple  & 0.00016 & 0.002 & 0.003 &  0.00016 & 0.002 & 0.003 \\
Fandisk  & 0.01507 & 0.019 & 0.037 & 0.01504 & 0.019 & 0.037\\
Bimba       & 0.00290 & 0.012 & 0.015 & 0.00290 & 0.012 & 0.015  \\     
Lion        & 0.00364  & 0.010 & 0.012 & 0.00362 & 0.010 & 0.012  \\    
David Head  & 0.00239  & 0.008  & 0.012  & 0.00239 & 0.008 & 0.012  \\     
Venus     & 0.00090  & 0.005  &  0.008 & 0.00090 & 0.005 & 0.008  \\ 
Brain & 0.00591 & 0.012 & 0.014 & 0.00591 & 0.012 & 0.014 \\
\hline
\end{tabular}
\caption{Dirichlet energy $E_D(f)$ in \eqref{1a} and the mean and SD of the relative  local angle distortions $\{d_{\theta}\}$ computed by Algorithms~\ref{alg1} and \ref{alg:CEM_defl}.} \label{tab:E_d_d_theta_benchmark}
\end{table}

\begin{figure}
\center
\begin{subfigure}[b]{0.32\textwidth}
\center
    \includegraphics[width=\textwidth]{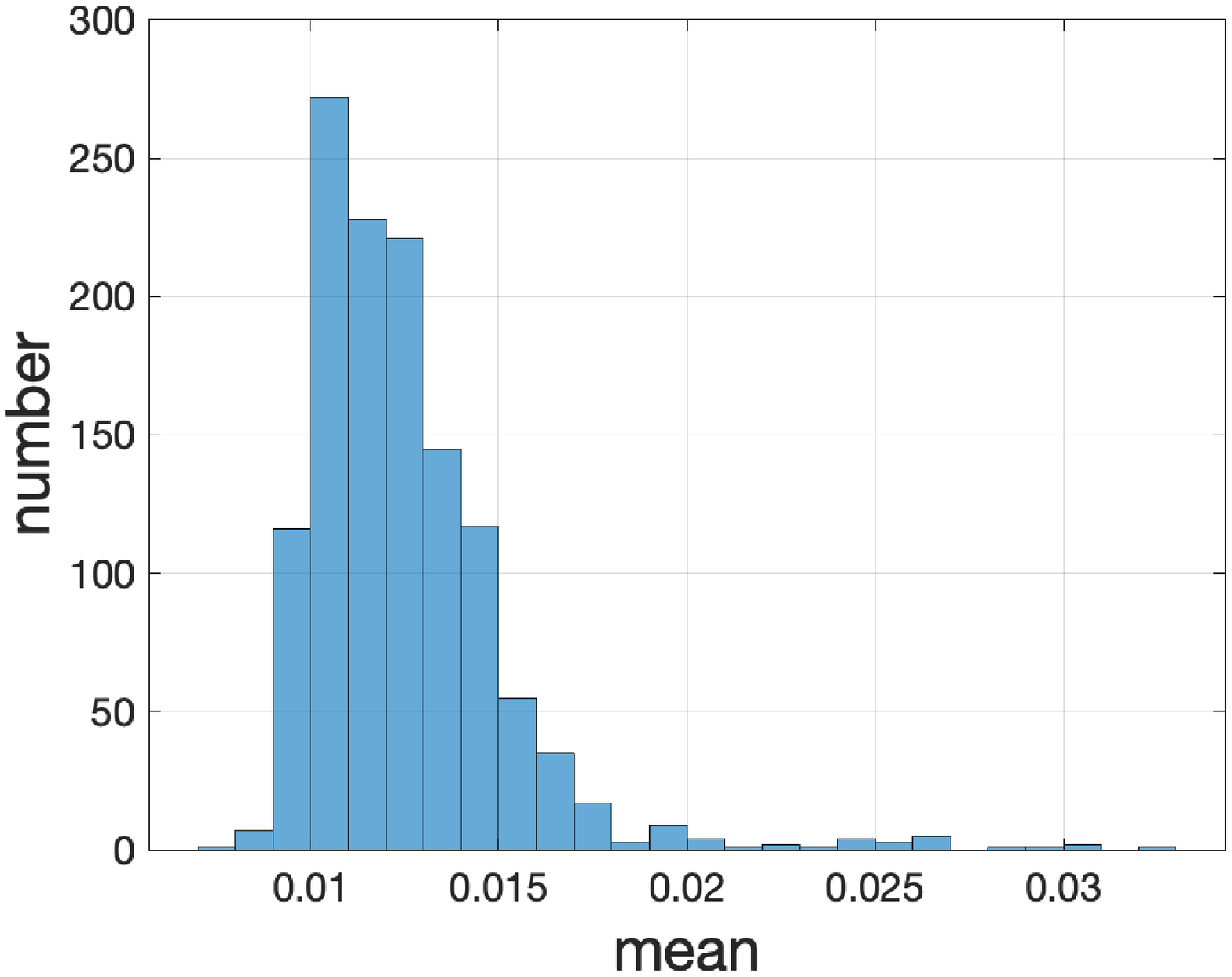}
\caption{Mean}
    \label{fig:mean_d_theta_deflate_1p4}
\end{subfigure}
\begin{subfigure}[b]{0.32\textwidth}
\center
    \includegraphics[width=\textwidth]{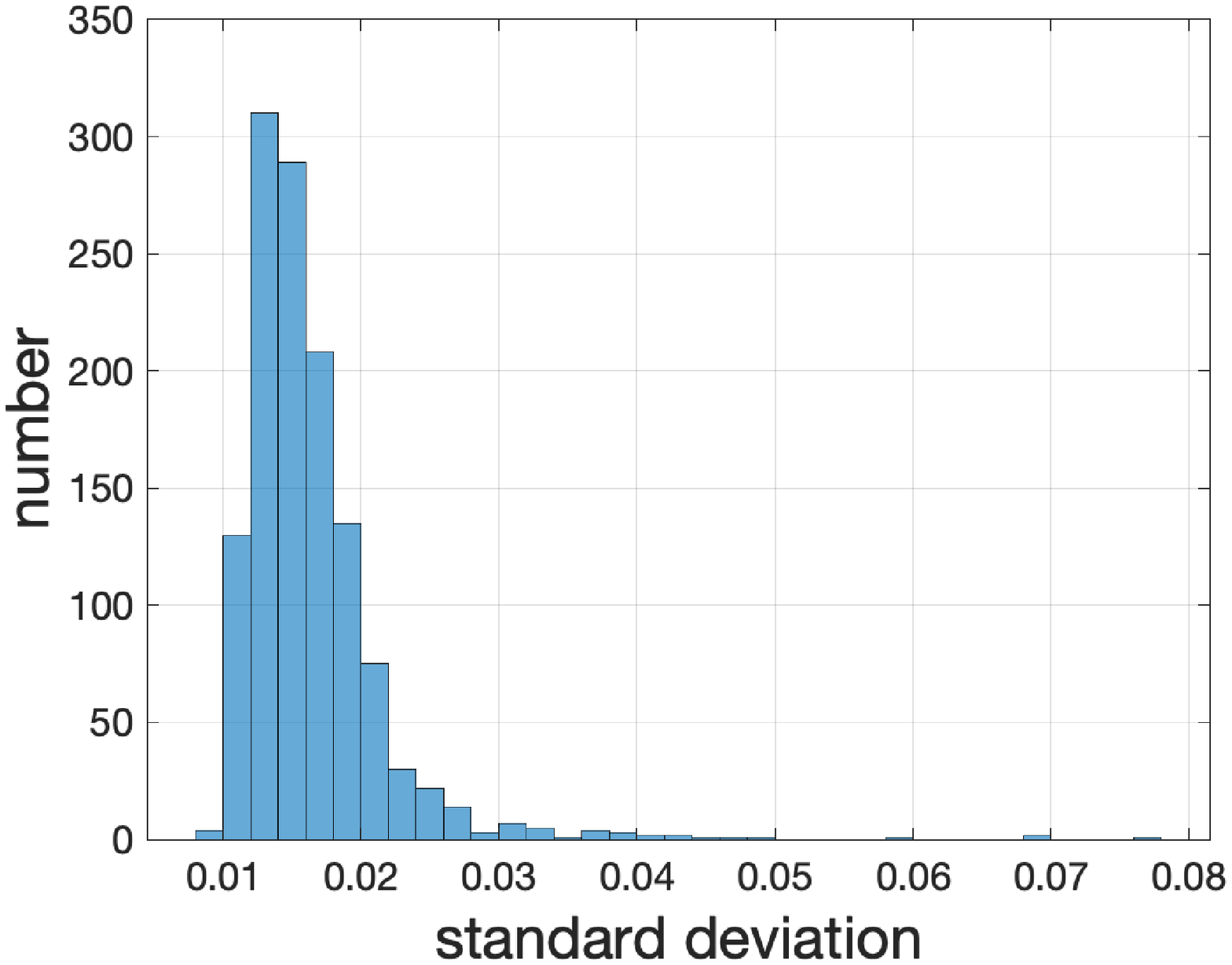}
\caption{SD}
    \label{fig:STD_defl_1p18}
\end{subfigure}
\begin{subfigure}[b]{0.32\textwidth}
\center
    \includegraphics[width=\textwidth]{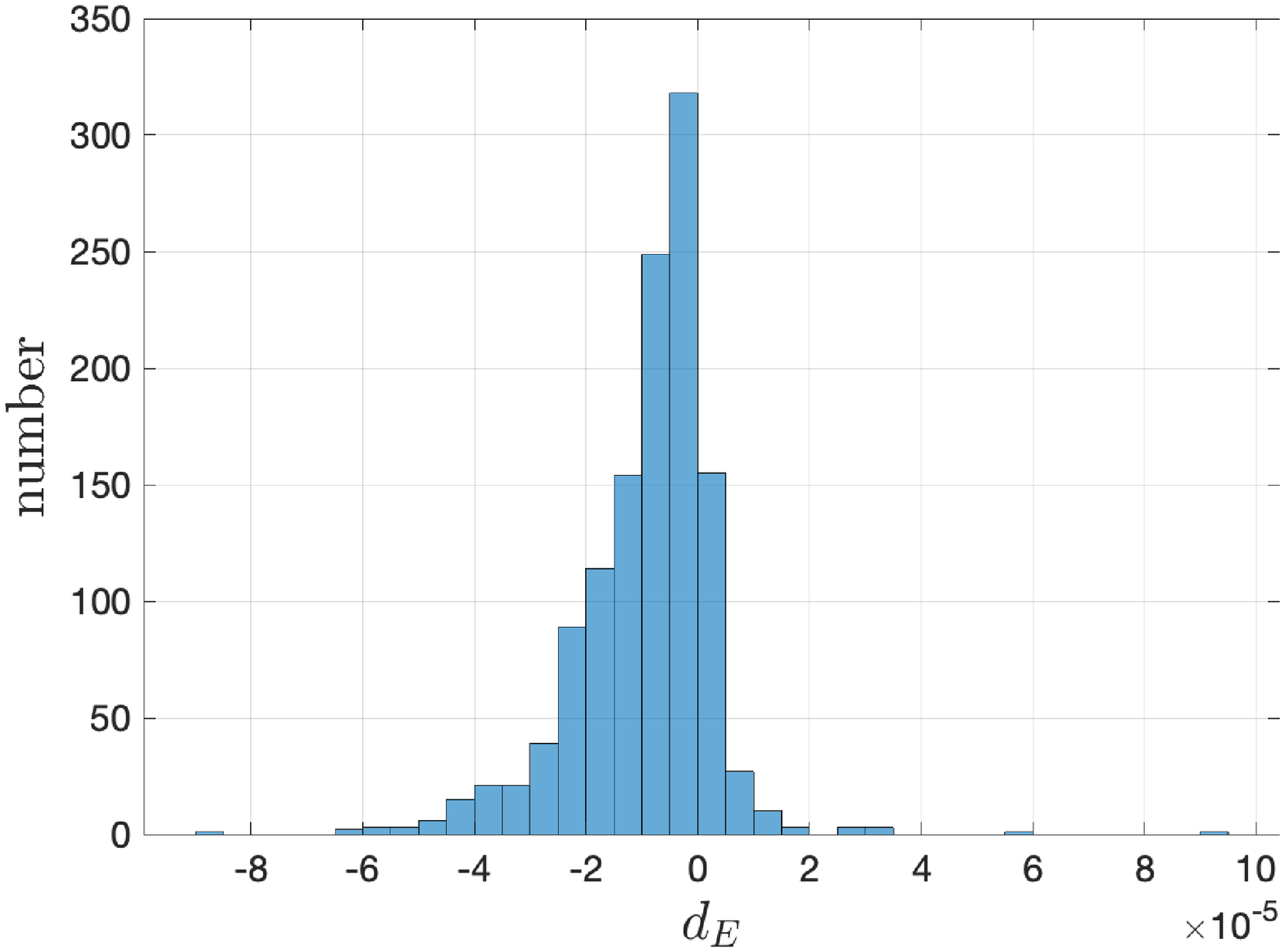}
\caption{$d_{E}$}
    \label{fig:diff_Eng_defl_1p4}
\end{subfigure}
\caption{Histograms of the mean and SD of local angle distortions $\{d_{\theta}\}$ for 1251 brain images by using Algorithm~\ref{alg:CEM_defl} and the difference of Dirichlet energies $d_{E}$ by Algorithms~\ref{alg:CEM_defl} and \ref{alg1}.}
\label{fig:Mean_STD_defl_1p4}
\end{figure}

\bmhead{Acknowledgments}
This work was partially supported by the Ministry of Science and Technology (MoST), the National Center for Theoretical Sciences, the Nanjing Center for Applied Mathematics, and the ST Yau Center in Taiwan. W.-W. Lin, T.-M. Huang, and M.-H. Yueh were partially supported by
MoST 110-2115-M-A49-004-, 110-2115-M-003-012-MY3, and 109-2115-M-003-010-MY2 and 110-2115-M-003-014-, respectively.

\bibliography{reference} 

%% BioMed_Central_Bib_Style_v1.01

\begin{thebibliography}{18}
% BibTex style file: bmc-mathphys.bst (version 2.1), 2014-07-24
\ifx \bisbn   \undefined \def \bisbn  #1{ISBN #1}\fi
\ifx \binits  \undefined \def \binits#1{#1}\fi
\ifx \bauthor  \undefined \def \bauthor#1{#1}\fi
\ifx \batitle  \undefined \def \batitle#1{#1}\fi
\ifx \bjtitle  \undefined \def \bjtitle#1{#1}\fi
\ifx \bvolume  \undefined \def \bvolume#1{\textbf{#1}}\fi
\ifx \byear  \undefined \def \byear#1{#1}\fi
\ifx \bissue  \undefined \def \bissue#1{#1}\fi
\ifx \bfpage  \undefined \def \bfpage#1{#1}\fi
\ifx \blpage  \undefined \def \blpage #1{#1}\fi
\ifx \burl  \undefined \def \burl#1{\textsf{#1}}\fi
\ifx \doiurl  \undefined \def \doiurl#1{\url{https://doi.org/#1}}\fi
\ifx \betal  \undefined \def \betal{\textit{et al.}}\fi
\ifx \binstitute  \undefined \def \binstitute#1{#1}\fi
\ifx \binstitutionaled  \undefined \def \binstitutionaled#1{#1}\fi
\ifx \bctitle  \undefined \def \bctitle#1{#1}\fi
\ifx \beditor  \undefined \def \beditor#1{#1}\fi
\ifx \bpublisher  \undefined \def \bpublisher#1{#1}\fi
\ifx \bbtitle  \undefined \def \bbtitle#1{#1}\fi
\ifx \bedition  \undefined \def \bedition#1{#1}\fi
\ifx \bseriesno  \undefined \def \bseriesno#1{#1}\fi
\ifx \blocation  \undefined \def \blocation#1{#1}\fi
\ifx \bsertitle  \undefined \def \bsertitle#1{#1}\fi
\ifx \bsnm \undefined \def \bsnm#1{#1}\fi
\ifx \bsuffix \undefined \def \bsuffix#1{#1}\fi
\ifx \bparticle \undefined \def \bparticle#1{#1}\fi
\ifx \barticle \undefined \def \barticle#1{#1}\fi
\bibcommenthead
\ifx \bconfdate \undefined \def \bconfdate #1{#1}\fi
\ifx \botherref \undefined \def \botherref #1{#1}\fi
\ifx \url \undefined \def \url#1{\textsf{#1}}\fi
\ifx \bchapter \undefined \def \bchapter#1{#1}\fi
\ifx \bbook \undefined \def \bbook#1{#1}\fi
\ifx \bcomment \undefined \def \bcomment#1{#1}\fi
\ifx \oauthor \undefined \def \oauthor#1{#1}\fi
\ifx \citeauthoryear \undefined \def \citeauthoryear#1{#1}\fi
\ifx \endbibitem  \undefined \def \endbibitem {}\fi
\ifx \bconflocation  \undefined \def \bconflocation#1{#1}\fi
\ifx \arxivurl  \undefined \def \arxivurl#1{\textsf{#1}}\fi
\csname PreBibitemsHook\endcsname

%%% 1
\bibitem{koebe1907}
\begin{barticle}
\bauthor{\bsnm{Koebe}, \binits{P.}}:
\batitle{{\"U}ber die uniformisierung beliebiger analytischer kurven}.
\bjtitle{Nachrichten von der Gesellschaft der Wissenschaften zu G{\"o}ttingen,
  Mathematisch-Physikalische Klasse}
\bvolume{1907},
\bfpage{191}--\blpage{210}
(\byear{1907})
\end{barticle}
\endbibitem

%%% 2
\bibitem{poincare1908}
\begin{barticle}
\bauthor{\bsnm{Poincar{\'e}}, \binits{H.}}:
\batitle{Sur l'uniformisation des fonctions analytiques}.
\bjtitle{Acta Math.}
\bvolume{31},
\bfpage{1}--\blpage{63}
(\byear{1908})
\end{barticle}
\endbibitem

%%% 3
\bibitem{hilbert1935}
\begin{bchapter}
\bauthor{\bsnm{Hilbert}, \binits{D.}}:
\bctitle{Zur theorie der konformen abbildung}.
In: \bbtitle{Dritter Band: Analysis{\textperiodcentered} Grundlagen der
  Mathematik{\textperiodcentered} Physik Verschiedenes: Nebst Einer
  Lebensgeschichte},
pp. \bfpage{73}--\blpage{80}.
\bpublisher{Springer},
\blocation{Verlag Berlin Heidelberg}
(\byear{1935})
\end{bchapter}
\endbibitem

%%% 4
\bibitem{AnHT99}
\begin{barticle}
\bauthor{\bsnm{Angenent}, \binits{S.}},
\bauthor{\bsnm{Haker}, \binits{S.}},
\bauthor{\bsnm{Tannenbaum}, \binits{A.}},
\bauthor{\bsnm{Kikinis}, \binits{R.}}:
\batitle{On the {L}aplace-{B}eltrami operator and brain surface flattening}.
\bjtitle{IEEE Trans. Med. Imaging}
\bvolume{18}(\bissue{8}),
\bfpage{700}--\blpage{711}
(\byear{1999}).
\doiurl{10.1109/42.796283}
\end{barticle}
\endbibitem

%%% 5
\bibitem{GuWC04}
\begin{barticle}
\bauthor{\bsnm{Gu}, \binits{X.}},
\bauthor{\bsnm{Wang}, \binits{Y.}},
\bauthor{\bsnm{Chan}, \binits{T.F.}},
\bauthor{\bsnm{Thompson}, \binits{P.M.}},
\bauthor{\bsnm{Yau}, \binits{S.-T.}}:
\batitle{Genus zero surface conformal mapping and its application to brain
  surface mapping}.
\bjtitle{IEEE Trans. Med. Imaging}
\bvolume{23}(\bissue{8}),
\bfpage{949}--\blpage{958}
(\byear{2004}).
\doiurl{10.1109/TMI.2004.831226}
\end{barticle}
\endbibitem

%%% 6
\bibitem{HuGH14}
\begin{barticle}
\bauthor{\bsnm{Huang}, \binits{W.-Q.}},
\bauthor{\bsnm{Gu}, \binits{X.D.}},
\bauthor{\bsnm{Huang}, \binits{T.-M.}},
\bauthor{\bsnm{Lin}, \binits{S.-S.}},
\bauthor{\bsnm{Lin}, \binits{W.-W.}},
\bauthor{\bsnm{Yau}, \binits{S.-T.}}:
\batitle{High performance computing for spherical conformal and {R}iemann
  mappings}.
\bjtitle{Geom. Imag. Comput.}
\bvolume{1}(\bissue{2}),
\bfpage{223}--\blpage{258}
(\byear{2014}).
\doiurl{10.4310/GIC.2014.v1.n2.a2}
\end{barticle}
\endbibitem

%%% 7
\bibitem{ChLL15}
\begin{barticle}
\bauthor{\bsnm{Choi}, \binits{P.T.}},
\bauthor{\bsnm{Lam}, \binits{K.C.}},
\bauthor{\bsnm{Lui}, \binits{L.M.}}:
\batitle{{FLASH}: Fast landmark aligned spherical harmonic parameterization for
  genus-0 closed brain surfaces}.
\bjtitle{SIAM J. Imaging Sci.}
\bvolume{8}(\bissue{1}),
\bfpage{67}--\blpage{94}
(\byear{2015}).
\doiurl{10.1137/130950008}
\end{barticle}
\endbibitem

%%% 8
\bibitem{ChLG20}
\begin{barticle}
\bauthor{\bsnm{Choi}, \binits{G.P.T.}},
\bauthor{\bsnm{Leung-Liu}, \binits{Y.}},
\bauthor{\bsnm{Gu}, \binits{X.}},
\bauthor{\bsnm{Lui}, \binits{L.M.}}:
\batitle{Parallelizable global conformal parameterization of simply-connected
  surfaces via partial welding}.
\bjtitle{SIAM J. Imaging Sci.}
\bvolume{13}(\bissue{3}),
\bfpage{1049}--\blpage{1083}
(\byear{2020}).
\doiurl{10.1137/19M125337X}
\end{barticle}
\endbibitem

%%% 9
\bibitem{YuLW17}
\begin{barticle}
\bauthor{\bsnm{Yueh}, \binits{M.-H.}},
\bauthor{\bsnm{Lin}, \binits{W.-W.}},
\bauthor{\bsnm{Wu}, \binits{C.-T.}},
\bauthor{\bsnm{Yau}, \binits{S.-T.}}:
\batitle{An efficient energy minimization for conformal parameterizations}.
\bjtitle{J. Sci. Comput.}
\bvolume{73}(\bissue{1}),
\bfpage{203}--\blpage{227}
(\byear{2017}).
\doiurl{10.1007/s10915-017-0414-y}
\end{barticle}
\endbibitem

%%% 10
\bibitem{YuLL19}
\begin{barticle}
\bauthor{\bsnm{Yueh}, \binits{M.-H.}},
\bauthor{\bsnm{Li}, \binits{T.}},
\bauthor{\bsnm{Lin}, \binits{W.-W.}},
\bauthor{\bsnm{Yau}, \binits{S.-T.}}:
\batitle{A novel algorithm for volume-preserving parameterizations of
  3-manifolds}.
\bjtitle{SIAM J. Imag. Sci.}
\bvolume{12}(\bissue{2}),
\bfpage{1071}--\blpage{1098}
(\byear{2019}).
\doiurl{10.1137/18M1201184}
\end{barticle}
\endbibitem

%%% 11
\bibitem{BaAS17}
\begin{botherref}
\oauthor{\bsnm{Bakas}, \binits{S.}},
\oauthor{\bsnm{Akbari}, \binits{H.}},
\oauthor{\bsnm{Sotiras}, \binits{A.}},
\oauthor{\bsnm{Bilello}, \binits{M.}},
\oauthor{\bsnm{Rozycki}, \binits{M.}},
\oauthor{\bsnm{Kirby}, \binits{J.S.}},
\oauthor{\bsnm{Freymann}, \binits{J.B.}},
\oauthor{\bsnm{Farahani}, \binits{K.}},
\oauthor{\bsnm{Davatzikos}, \binits{C.}}:
Advancing the cancer genome {A}tlas glioma {MRI} collections with expert
  segmentation labels and radiomic features.
Sci. Data
\textbf{4}(170117)
(2017).
\doiurl{10.1038/sdata.2017.117}
\end{botherref}
\endbibitem

%%% 12
\bibitem{BaGB21}
\begin{botherref}
\oauthor{\bsnm{Baid}, \binits{U.}},
\oauthor{\bsnm{Ghodasara}, \binits{S.}},
\oauthor{\bsnm{Bilello}, \binits{M.}},
\oauthor{\bsnm{Mohan}, \binits{S.}},
\oauthor{\bsnm{Calabrese}, \binits{E.}},
\oauthor{\bsnm{Colak}, \binits{E.}},
\oauthor{\bsnm{Farahani}, \binits{K.}},
\oauthor{\bsnm{Kalpathy-Cramer}, \binits{J.}},
\oauthor{\bsnm{Kitamura}, \binits{F.C.}},
\oauthor{\bsnm{Pati}, \binits{S.}},
\oauthor{\bsnm{Prevedello}, \binits{L.M.}},
\oauthor{\bsnm{Rudie}, \binits{J.D.}},
\oauthor{\bsnm{Sako}, \binits{C.}},
\oauthor{\bsnm{Shinohara}, \binits{R.T.}},
\oauthor{\bsnm{Bergquist}, \binits{T.}},
\oauthor{\bsnm{Chai}, \binits{R.}},
\oauthor{\bsnm{Eddy}, \binits{J.}},
\oauthor{\bsnm{Elliott}, \binits{J.}},
\oauthor{\bsnm{Reade}, \binits{W.}},
\oauthor{\bsnm{Schaffter}, \binits{T.}},
\oauthor{\bsnm{Yu}, \binits{T.}},
\oauthor{\bsnm{Zheng}, \binits{J.}},
\oauthor{\bsnm{Annotators}, \binits{B.}},
\oauthor{\bsnm{Davatzikos}, \binits{C.}},
\oauthor{\bsnm{Mongan}, \binits{J.}},
\oauthor{\bsnm{Hess}, \binits{C.}},
\oauthor{\bsnm{Cha}, \binits{S.}},
\oauthor{\bsnm{Villanueva-Meyer}, \binits{J.}},
\oauthor{\bsnm{Freymann}, \binits{J.B.}},
\oauthor{\bsnm{Kirby}, \binits{J.S.}},
\oauthor{\bsnm{Wiestler}, \binits{B.}},
\oauthor{\bsnm{Crivellaro}, \binits{P.}},
\oauthor{\bsnm{Colen}, \binits{R.R.}},
\oauthor{\bsnm{Kotrotsou}, \binits{A.}},
\oauthor{\bsnm{Marcus}, \binits{D.}},
\oauthor{\bsnm{Milchenko}, \binits{M.}},
\oauthor{\bsnm{Nazeri}, \binits{A.}},
\oauthor{\bsnm{Fathallah-Shaykh}, \binits{H.}},
\oauthor{\bsnm{Wiest}, \binits{R.}},
\oauthor{\bsnm{Jakab}, \binits{A.}},
\oauthor{\bsnm{Weber}, \binits{M.-A.}},
\oauthor{\bsnm{Mahajan}, \binits{A.}},
\oauthor{\bsnm{Menze}, \binits{B.}},
\oauthor{\bsnm{Flanders}, \binits{A.E.}},
\oauthor{\bsnm{Bakas}, \binits{S.}}:
The {RSNA-ASNR-MICCAI} {B}ra{TS} 2021 Benchmark on Brain Tumor Segmentation and
  Radiogenomic Classification
(2021)
\end{botherref}
\endbibitem

%%% 13
\bibitem{jost2008}
\begin{bbook}
\bauthor{\bsnm{Jost}, \binits{J.}},
\bauthor{\bsnm{Jost}, \binits{J.}}:
\bbtitle{Riemannian Geometry and Geometric Analysis}
vol. \bseriesno{42005}.
\bpublisher{Springer},
\blocation{Verlag Berlin Heidelberg}
(\byear{2008})
\end{bbook}
\endbibitem

%%% 14
\bibitem{ahlfors2010}
\begin{bbook}
\bauthor{\bsnm{Ahlfors}, \binits{L.V.}}:
\bbtitle{Conformal Invariants: Topics in Geometric Function Theory}
vol. \bseriesno{371}.
\bpublisher{AMS Chelsea Publishing},
\blocation{USA}
(\byear{2010})
\end{bbook}
\endbibitem

%%% 15
\bibitem{GuYa08}
\begin{bbook}
\bauthor{\bsnm{Gu}, \binits{X.D.}},
\bauthor{\bsnm{Yau}, \binits{S.-T.}}:
\bbtitle{Computational Conformal Geometry}
vol. \bseriesno{1}.
\bpublisher{International Press},
\blocation{Somerville, Massachusetts, USA}
(\byear{2008})
\end{bbook}
\endbibitem

%%% 16
\bibitem{meye:2000}
\begin{bbook}
\bauthor{\bsnm{Meyer}, \binits{C.D.}}:
\bbtitle{Matrix Analysis and Applied Linear Algebra}.
\bpublisher{SIAM},
\blocation{Philadelphia, PA, USA}
(\byear{2000})
\end{bbook}
\endbibitem

%%% 17
\bibitem{GitHub}
\begin{botherref}
{GitHub - alecjacobson/common-3d-test-models: Repository containing common 3D
  test models in original format with original source if known and obj mesh}.
\url{https://github.com/alecjacobson/common-3d-test-models}
(2021)
\end{botherref}
\endbibitem

%%% 18
\bibitem{GuWeb}
\begin{botherref}
{David Xianfeng Gu's Home Page}.
\url{http://www3.cs.stonybrook.edu/~gu/}
(2017)
\end{botherref}
\endbibitem

\end{thebibliography}

\end{document}